\numberwithin{equation}{section}
\newtheorem{deff}{Definition}[section]
\newtheorem{lemma}[deff]{Lemma}
\newtheorem{theorem}[deff]{Theorem}
\newtheorem{corollary}[deff]{Corollary}
\newtheorem{proposition}[deff]{Proposition}
\newtheorem{em-example}[deff]{Example}
\newtheorem{em-def}[deff]{Definition}        
\newtheorem{em-remark}[deff]{Remark}         
\newtheorem{em-question}[deff]{Question}
\newenvironment{example}{\begin{em-example} \em }{ \end{em-example}}
\newenvironment{remark}{\begin{em-remark} \em }{\end{em-remark}}
\newcommand{\N}{\mathbb N}
\newcommand{\CC}{\mathcal C}
\newcommand{\Z}{\mathbb Z}
\newcommand{\f}{\phi}
\def\ent{\mathrm{ent}}
\def\End{\mathrm{End}}
\def\Aut{\mathrm{Aut}}
\def\Im{\mathrm{Im}}
\def\K{\mathbb F_p}
\def\cont{\mathfrak c}
\global\def\hull#1{\langle{#1}\rangle}
\global\def\card#1{\left|{#1}\right|}
\global\def\dual#1{{#1}^*}
\journal{Journal of Algebra}
\begin{document}

\begin{frontmatter}

\title{Adjoint algebraic entropy}

\author{Dikran Dikranjan}
\ead{dikran.dikranjan@dimi.uniud.it}
\address{Dipartimento di Informatica e Matematica, Universit\`a di Udine, via delle Scienze, 206 - 33100 Udine, Italy}

\author{Anna Giordano Bruno}
\ead{anna.giordanobruno@math.unipd.it}
\address{Dipartimento di Matematica Pura ed Applicata, Universit\`a di Padova, Via Trieste, 63 - 35121 Padova, Italy}

\author{Luigi Salce}
\address{Dipartimento di Matematica Pura ed Applicata, Universit\`a di Padova, Via Trieste, 63 - 35121 Padova, Italy}
\ead{salce@math.unipd.it}

\begin{abstract}
The new notion of adjoint algebraic entropy of endomorphisms of Abelian groups is introduced. Various examples and basic properties are provided. It is proved that the adjoint algebraic entropy of an endomorphism equals the algebraic entropy of the adjoint endomorphism of the Pontryagin dual.  As applications, we compute the adjoint algebraic entropy of the shift endomorphisms of direct sums, and we prove an Addition Theorem for the adjoint algebraic entropy of bounded Abelian groups. A dichotomy is established,  stating that the adjoint algebraic entropy of any endomorphism can take only values zero or infinity.  As a consequence, we obtain the following surprising discontinuity criterion for endomorphisms: every endomorphism of a compact abelian group, having finite positive algebraic entropy, is  discontinuous. This resolves in a strong way an open question from \cite{AGB}. 
\end{abstract}

\begin{keyword}
algebraic entropy \sep adjoint algebraic entropy \sep Pontryagin duality \sep endomorphisms rings \sep Abelian groups

\MSC{Primary: 20K30. Secondary: 28D20, 22D35}

\end{keyword}

\end{frontmatter}

\section{Introduction}

The algebraic entropy of endomorphisms of Abelian groups was first defined by Adler, Konheim and McAndrew in 1965 \cite{AKM}, and then studied ten years later by Weiss in 1974/75 \cite{W}. Weiss proved that the algebraic entropy of an endomorphism $\phi : G \to G$ of a torsion Abelian group $G$ coincides with the topological entropy of the adjoint map $\dual\phi: \dual G\to \dual G$ of $\phi$, where $\dual G$ is the Pontryagin dual of $G$.
More recently, a deeper investigation of the algebraic entropy was developed by Dikranjan, Goldsmith, Salce and Zanardo in \cite{DGSZ}. Particular aspects of the algebraic entropy have been subsequently examined, as its relationship with the commutativity of the endomorphism ring modulo the ideal of the small endomorphisms (see \cite{SZ}), and its behaviour for generalized shifts on direct sums (see \cite{AADGH}) and direct products (see \cite{AGB}) of finite Abelian groups.

\smallskip
Roughly speaking, the algebraic entropy $\ent(\phi)$ of the group endomorphism $\phi : G \to G$ measures how much the finite subgroups $F$ of $G$ are moved by $\phi$. This measure is taken on the so-called $n$-th trajectories of $F$ (where $n$ is a natural number), namely, the finite subgroups of $G$ defined by $$T_n(\phi,F) = F + \phi F+ \phi^2F + \ldots+ \phi^{n-1}F$$ (while the $\phi$-trajectory of $F$ is $T(\phi,F)=\sum_{n}\phi^nF$,  the smallest $\phi$-invariant subgroup of $G$ containing $F$). More precisely, the exact definition of the algebraic entropy is:
$$\ent(\phi) = \sup\left\{ \lim_{n\to\infty} \frac{\log \card{T_n(\phi,F)}}{n}: F\leq G,\ \text{$F$ finite}\right\} .$$

It is natural to capture the action of the endomorphism $\phi$ making use of another class of subgroups of $G$, which is in a certain sense dual to the class of finite subgroups, namely, the class of subgroups of finite index, since this class is stable under taking inverse images of $\phi$. If $N$ is such a subgroup of $G$, it is natural to replace the $n$-th trajectory by the $n$-th cotrajectory of $N$, defined by
$$C_n(\phi,N) = \frac{G}{N\cap\phi^{-1}N\cap\ldots\cap\phi^{-n+1}N}$$
and this leads to the notion of adjoint algebraic entropy, defined by
$$\ent^\star(\phi)=\sup\left\{\lim_{n\to\infty}\frac{\log\card{C_n(\phi,N)}}{n}: N\leq G,\ G/N\ \text{finite}\right\}.$$

For many of the properties of the algebraic entropy the corresponding version for the adjoint algebraic entropy holds true, as we will see in Sections \ref{def-sec} and \ref{first}. However, this correspondence fails to behave as a perfect duality in certain points. For example, the value of the adjoint algebraic entropy of the Bernoulli shifts (see Example \ref{bernoulli-example} for their definitions) is always infinity, due to the following remarkable fact.  While a countable Abelian group has countably many finite subgroups, the size of the family of its finite-index subgroups may attain the cardinality of continuum or it is countable (the latter case is completely described in Theorem \ref{narrow:char}). 
The main consequence of this fact is that, while the algebraic entropy $\ent(\phi)$ can take values zero, positive (actually, logarithms of positive integers) or infinity, the adjoint algebraic entropy $\ent^\star(\phi)$ can take only values zero or infinite. This dichotomy is proved in Section \ref{dichotomy}.  This is one of the instances witnessing the failure
of a perfect ``duality" between these two entropies. 
Actually, a well-known duality plays a crucial role when dealing with the adjoint algebraic entropy, namely, the Pontryagin duality. In fact, we will show in Section \ref{sec-3} that the adjoint algebraic entropy $\ent^\star(\phi)$ equals the algebraic entropy of the adjoint map $\dual\phi: \dual G \to \dual G$, that is, the following formula holds:
$$\ent^\star(\phi) = \ent(\dual\phi).$$

In Section \ref{sec6}, as application of the above formula, we will compute the adjoint algebraic entropy of the Bernoulli shifts on direct sums and products, pointing out their different behaviour with respect to the algebraic entropy;
furthermore, we will prove the Addition Theorem for endomorphisms of bounded Abelian groups, explaining why its extension to wider classes of groups is impossible. Finally, in Section \ref{dichotomy}, after the proof of the dichotomy theorem, we will provide several characterizations of the endomorphisms with zero adjoint entropy.

All groups considered in this paper will be Abelian. Notation and unexplained terminology follow \cite{F}.

\section{Definition and existence}\label{def-sec}

We denote by $\N$ the set of natural numbers, by $\N_+$ the set of positive integers, and by $\Z$ the set of integers.
For $G$ an Abelian group, we denote by $\mathcal C(G)$ the set of the cofinite subgroups of $G$, that is, of the subgroups of finite index.

\begin{lemma}\label{trivial}
Let $H, G$ be Abelian groups and $f:H\to G$ a homomorphism. Then $f^{-1}N\in\CC(H)$ for every $N\in \CC(G)$. The correspondence $\Psi: \CC(G) \to \CC(H)$ defined by $N\mapsto f^{-1}(N)$ is injective if $f$ is surjective. 
\end{lemma}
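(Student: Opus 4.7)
The plan is to verify both claims by standard factorization arguments through the quotient.

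For the first assertion, I would consider the composition $H \xrightarrow{f} G \xrightarrow{\pi} G/N$, where $\pi$ denotes the canonical projection. The kernel of this composition is precisely $f^{-1}(N)$, so by the first isomorphism theorem $H/f^{-1}(N)$ embeds into $G/N$. Since $N \in \CC(G)$ means $G/N$ is finite, the subgroup $H/f^{-1}(N)$ is finite as well, hence $f^{-1}(N) \in \CC(H)$.

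For the second assertion, I would use the fact that when $f$ is surjective, one has $f(f^{-1}(N)) = N$ for every subgroup $N$ of $G$. Indeed, the inclusion $f(f^{-1}(N)) \subseteq N$ always holds, and surjectivity gives the reverse inclusion: any $n \in N \subseteq G$ is $f(h)$ for some $h \in H$, and such $h$ lies in $f^{-1}(N)$. Consequently, if $N_1, N_2 \in \CC(G)$ satisfy $\Psi(N_1) = \Psi(N_2)$, applying $f$ to both sides yields $N_1 = N_2$, so $\Psi$ is injective.

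Both steps are entirely routine; there is no real obstacle, as the whole statement is a direct instance of the correspondence between subgroups under surjective homomorphisms, with the finite-index property preserved because passage to $f^{-1}$ only decreases the index. I would present the argument in two short paragraphs matching the two claims, emphasizing the embedding $H/f^{-1}(N) \hookrightarrow G/N$ for the first part and the identity $f(f^{-1}(N)) = N$ under surjectivity for the second.
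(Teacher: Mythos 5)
Your proof is correct and follows essentially the same route as the paper: the injective map $H/f^{-1}(N)\to G/N$ induced by $f$ for the first claim, and the identity $f(f^{-1}(N))=N$ under surjectivity for the second. Nothing to add.
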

\begin{proof}
Let $N\in\CC(G)$. The homomorphism $\tilde f:H/f^{-1}N\to G/N$ is well-defined and injective, hence $f^{-1}N\in\CC(H)$.
Assume that $f$ is surjective and that $f^{-1}N=f^{-1}M$ for some $N,M\in\CC(G)$. Then $N=f(f^{-1}N)=f(f^{-1}M))=M$, since $f$ is surjective.
\end{proof}

The correspondence $\Psi: N\mapsto f^{-1}N$ of Lemma \ref{trivial} need not be neither injective nor surjective, when $H$ is a subgroup of $G$ and $f$ is the inclusion $f: H\hookrightarrow G$. This need not be true even when the quotient $G/H$ is divisible. It is possible to see that, if $H$ is in addition pure on $G$, then $\Psi$ becomes a bijection. Moreover, if $H$ is a direct summand of $G$, then $\Psi$ is surjective (see the forthcoming paper \cite{SZ2}).

\medskip
Let $G$ be an Abelian group and $N\in \mathcal C(G)$.
For an endomorphism $\phi:G\to G$ and a positive integer $n$, let $$B_n(\phi,N)=N\cap\phi^{-1}N\cap\ldots\cap\phi^{-n+1}N$$ and let $$C_n(\phi,N)=\frac{G}{B_n(\phi,N)};$$ 
$C_n(\phi,N)$ is called the \emph{$n$-th $\phi$-cotrajectory} of $N$.
Let $$B(\phi,N)=\bigcap_{n\in\N}\phi^{-n}N,\ \ C(\phi,N)=\frac{G}{B(\phi,N)};$$ 
$C(\phi,N)$ is called the \emph{$\phi$-cotrajectory} of $N$.
It is easy to check that $B(\phi,N)$ is the maximum $\phi$-invariant subgroup of $N$.

Since the map induced by $\phi^n$ on $\frac{G}{\phi^{-n}N}\to \frac{G}{N}$ is injective, it follows that $\frac{G}{\phi^{-n}N}$ is finite for every $n\in\N$. Moreover, $C_n(\phi,N)$ is finite for every $n\in\N_+$, as $B_n(\phi,N)\in \mathcal C(G)$ for every $n\in\N_+$, because $\mathcal C(G)$ is closed under finite intersections. 

\smallskip
The \emph{adjoint algebraic entropy of $\phi$ with respect to $N$} is 
\begin{equation}\label{H*}
H^\star(\phi,N)={\lim_{n\to +\infty}\frac{\log|C_n(\phi,N)|}{n}}.
\end{equation}
We will show now that this limit exists and is finite.
Let $B_n=B_n(\phi,N)$, $C_n=C_n(\phi,N)$, $c_n=\card{C_n}$ and $\gamma_{n}=\log c_n$ for every $n\in\N_+$.
Since $B_{n+1}$ is a subgroup of $B_n$, it follows that 
$$C_{n}=\frac{G}{B_{n}}\cong\frac{\frac{G}{B_{n+1}}}{\frac{B_n}{B_{n+1}}}=\frac{C_{n+1}}{\frac{B_n}{B_{n+1}}}.$$ 
Then $c_{n} | c_{n+1}$ for every $n\in\N_+$. So, setting $\alpha_{n+1}= c_{n+1}/{c_n}$, we have: $$\alpha_{n+1}=\card{\frac{B_n}{B_{n+1}}}\in \N_+.$$

\begin{lemma}\label{alpha_n+1|alpha_n}
For each $n>1$, $\alpha_{n+1}$ divides $\alpha_n$ in $\N_+$.
\end{lemma}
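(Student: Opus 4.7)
The plan is to produce an injective homomorphism
\[
\bar\phi : B_n/B_{n+1} \hookrightarrow B_{n-1}/B_n,
\]
induced by $\phi$ itself. Since both groups are finite (they are subquotients of $C_{n+1} = G/B_{n+1}$, which is finite by the discussion preceding \eqref{H*}), the existence of such an injection forces $|B_n/B_{n+1}| = \alpha_{n+1}$ to divide $|B_{n-1}/B_n| = \alpha_n$.

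First I would check that $\phi$ maps $B_m$ into $B_{m-1}$ for every $m \geq 2$. This is an index bookkeeping: if $x \in B_m$ then $\phi^k(x) \in N$ for $k = 0, 1, \ldots, m-1$, so $\phi^{k-1}(\phi(x)) \in N$ for $k = 1, \ldots, m-1$, i.e.\ $\phi(x) \in \bigcap_{j=0}^{m-2}\phi^{-j}N = B_{m-1}$. Applying this at $m = n$ and $m = n+1$ shows that $\phi$ restricts to $B_n \to B_{n-1}$ and sends $B_{n+1}$ into $B_n$, so it descends to the homomorphism $\bar\phi$ displayed above.

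Second I would verify injectivity of $\bar\phi$. Suppose $x \in B_n$ satisfies $\phi(x) \in B_n$. From $x \in B_n$ we get $\phi^j(x) \in N$ for $j = 0, 1, \ldots, n-1$; from $\phi(x) \in B_n$ we get $\phi^{j+1}(x) \in N$ for $j = 0, 1, \ldots, n-1$, i.e.\ $\phi^j(x) \in N$ for $j = 1, \ldots, n$. Together these give $\phi^j(x) \in N$ for all $j = 0, 1, \ldots, n$, which is precisely $x \in B_{n+1}$. Hence $\ker \bar\phi = 0$.

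There is no serious obstacle: the only thing to watch is the shift in indices between ``$x \in \phi^{-k}N$'' and ``$\phi^k(x) \in N$'', which is what makes $\phi$ lower the index of $B$ by one. Once the injection $\bar\phi$ is in place, divisibility of orders among finite abelian groups finishes the argument.
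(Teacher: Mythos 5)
Your proof is correct and rests on the same key idea as the paper's: producing an injection $B_n/B_{n+1}\hookrightarrow B_{n-1}/B_n$ induced by $\phi$ and then invoking Lagrange. Your implementation is in fact more direct --- you restrict $\phi$ itself to $B_n$ and check injectivity of the induced map by unwinding the definition of $B_{n+1}$, whereas the paper detours through the quotients $G/\phi^{-n}N\to G/\phi^{-n+1}N$ and the second isomorphism theorem --- but the mechanism is the same.
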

\begin{proof}
Fix $n\in\N$, $n>1$.  We intend to prove that $\frac{B_n}{B_{n+1}}$ is isomorphic to a subgroup of $\frac{B_{n-1}}{B_n}$ and so $\alpha_{n+1}|\alpha_n$.

First note that $\frac{B_n}{B_{n+1}}\cong \frac{B_n+\phi^{-n}N}{\phi^{-n}N}$.
From $B_n=N\cap \phi^{-1}B_{n-1}\leq\phi^{-1}B_{n-1}$, it follows that $\frac{B_n+\phi^{-n}N}{\phi^{-n}N}\leq A_n=\frac{\phi^{-1}B_{n-1}+\phi^{-n}N}{\phi^{-n}N}$. Since the homomorphism $\tilde\phi:\frac{G}{\phi^{-n}N}\to \frac{G}{\phi^{-n+1}N}$, induced by $\phi$, is injective, also its restriction to $A_n$ is injective, and the image of $A_n$
is contained in $L_n= \frac{B_{n-1}+\phi^{-n+1}N}{\phi^{-n+1}N}$, which is isomorphic to $\frac{B_{n-1}}{B_n}$.
Summarizing,
$$\frac{B_n}{B_{n+1}}\cong\frac{B_n+\phi^{-n}N}{\phi^{-n}N}\leq A_n \rightarrowtail L_n\cong \frac{B_{n-1}}{B_n},$$
which concludes the proof.
\end{proof}

Clearly, $c_n=c_1\cdot\alpha_2\cdot\ldots\cdot\alpha_n$, as $c_{n+1}=\alpha_{n+1}\cdot c_n$, for every $n\in\N_+$. By Lemma \ref{alpha_n+1|alpha_n}, there exist $N\in\N_+$ and $\alpha\in\N_+$ such that $\alpha_n=\alpha$ for every $n\geq N$. Let $a_0=c_1\cdot \alpha_2\cdot \ldots\cdot \alpha_N$, so that $c_{n+1}=c_n\alpha$ 
(and consequently, $\gamma_{n+1}=\gamma_n + \log \alpha$ for all $n\geq N$.
Therefore,
\begin{equation}\label{(*)}
c_n=a_0\cdot\alpha^{n-N}, \mbox{ and so } \gamma_n= \log a_0 + (n-N)\log \alpha,  \mbox{ for all } n\geq N.
\end{equation}
From \eqref{(*)} one can immediately see that $\lim \frac{\gamma_n}{n} = \log \alpha$, i.e., 
 the limit in \eqref{H*} exists and coincides with $\log\alpha$. 

Note that in \eqref{(*)} either $\alpha> 1$ or $\alpha=1$; in the latter case,  the sequence $0<c_1\leq c_2\leq\ldots$ is stationary, or equivalently $B(\phi,N)=B_n(\phi,N)$ for some $n\in\N_+$. We resume all what is proved above in the following

\begin{proposition}\label{C=C_n->ent*=0} 
Let $G$ be an Abelian group, $\phi\in\End(G)$ and $N\in\CC(G)$. Then 
the sequence $\alpha_n=\card{\frac{C_{n+1}}{C_n}}=\card{\frac{B_n}{B_{n+1}}}$ is stationary; more precisely, there exists a natural number $\alpha>0$ such that $\alpha_n= \alpha$ for all $n$ large enough. 
In particular,
\begin{itemize}
\item[(a)] the limit in \eqref{H*} does exist and it coincides with $\log\alpha$, and
\item[(b)] $H^\star(\phi,N)=0$ if and only if $C(\phi,N)=C_n(\phi,N)$ for some $n\in\N_+$.\hfill$\qed$
\end{itemize}
\end{proposition}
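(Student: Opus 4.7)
The plan is to invoke Lemma \ref{alpha_n+1|alpha_n} directly. That lemma furnishes a sequence $\alpha_2,\alpha_3,\ldots$ in $\N_+$ in which $\alpha_{n+1}$ divides $\alpha_n$; since a strictly descending chain of positive integers under divisibility must terminate, there exist $N\in\N_+$ and $\alpha\in\N_+$ such that $\alpha_n=\alpha$ for every $n\geq N$. This already yields the stationarity assertion, with $\alpha>0$ automatic since each $\alpha_n=\card{B_n/B_{n+1}}\geq 1$.

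Next I would unfold the telescoping identity $c_{n+1}=\alpha_{n+1}\cdot c_n$. Inserting $\alpha_n=\alpha$ for $n\geq N$ gives $c_n=a_0\cdot\alpha^{n-N}$ for all $n\geq N$, where $a_0=c_1\cdot\alpha_2\cdots\alpha_N$. Taking logarithms yields $\gamma_n=\log a_0+(n-N)\log\alpha$ for $n\geq N$, so $\gamma_n/n\to\log\alpha$ as $n\to\infty$. This establishes (a).

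For (b), from (a) the equivalence $H^\star(\phi,N)=0 \Leftrightarrow \log\alpha=0 \Leftrightarrow \alpha=1$ is immediate. Now $\alpha=1$ means precisely that $\card{B_n/B_{n+1}}=1$ for every $n\geq N$, i.e.\ the descending chain $B_1\supseteq B_2\supseteq\cdots$ stabilizes from some index $n_0$ onward. Its stable value must equal $\bigcap_{k\in\N}B_k=\bigcap_{k\in\N}\phi^{-k}N=B(\phi,N)$, so equivalently $B(\phi,N)=B_n(\phi,N)$, and therefore $C(\phi,N)=C_n(\phi,N)$, for some $n\in\N_+$.

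The real obstacle has already been absorbed into Lemma \ref{alpha_n+1|alpha_n}: it is the divisibility relation $\alpha_{n+1}\mid\alpha_n$. Without it one would only know that $(\alpha_n)$ is a sequence of positive integers, which need not stabilize and for which the limit $\lim \gamma_n/n$ would have to be extracted by a Fekete-type subadditivity argument. Given the divisibility, the proof of the proposition reduces to an essentially formal unwinding of the definitions; no further analytic ingredient is required, and in particular the existence of the limit in \eqref{H*} is upgraded to an \emph{exact} formula for $c_n$ eventually in $n$.
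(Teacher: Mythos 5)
Your proposal is correct and follows exactly the route the paper takes: the paper's argument (laid out in the text preceding the proposition, which is why the statement carries a bare \qed) also invokes Lemma \ref{alpha_n+1|alpha_n} to get the divisibility chain $\alpha_{n+1}\mid\alpha_n$, deduces eventual constancy $\alpha_n=\alpha$, writes $c_n=a_0\cdot\alpha^{n-N}$ to extract the limit $\log\alpha$, and identifies $H^\star(\phi,N)=0$ with $\alpha=1$, i.e.\ with stabilization of the chain $B_n(\phi,N)$. Your closing remark that this sidesteps the Fekete subadditivity argument in favour of an exact eventual formula for $c_n$ is precisely the point the paper itself makes in the remark following the proposition.
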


By Proposition \ref{C=C_n->ent*=0}, $\ent^\star(\phi)=\infty$ if and only if there exists a countable family $\{N_k\}_{k\in\N}\subseteq\CC(G)$ such that $H^\star(\phi,N_k)$ converges to $\infty$.

\begin{remark} 
Another consequence of Lemma \ref{alpha_n+1|alpha_n} is that the sequence of non-negative reals $\{\gamma_n\}_{n\in\N_+}$ is subadditive, i.e., for every $n,m\in\N_+$:
\begin{equation}\label{(dag)}
\gamma_{n+m}\leq \gamma_n+\gamma_m.
\end{equation}
Indeed, it suffices to see that for every $n,m\in\N_+$, $c_{n+m}\leq c_{n}\cdot c_m$.
Write $c_{n+m}=c_1\cdot\alpha_2\cdot\ldots\cdot\alpha_n\cdot\alpha_{n+1}\cdot\ldots\cdot\alpha_{n+m}$, and $c_n\cdot c_m=c_1^2\cdot(\alpha_2\cdot\ldots\cdot\alpha_n)\cdot(\alpha_2\cdot\ldots\cdot\alpha_m)$. Comparing term by term, since $\alpha_2\geq\alpha_{n+1},\ldots,\alpha_{m}\geq\alpha_{n+m}$, and $c_1\geq 1$, we can conclude that $c_n\cdot c_m\geq c_{n+m}$, and consequently \eqref{(dag)} holds.
Using \eqref{(dag)} and a well-known result of Calculus, one can obtain an alternative proof (making no recourse to \eqref{(*)}), that  the limit in \eqref{H*} exists and coincides with $\inf_{n\in\N_+} \frac{\gamma_n}{n}$. The advantage of our approach (via \eqref{(*)}) is a precise information on the value of the limit, namely, it is a logarithm of a natural number (whereas this remains unclear using the argument based on Calculus). 
\end{remark}

An easy computation shows that $H^\star(\phi,M)$ is a anti-monotone function on $M$.

\begin{lemma}\label{N<M->HN>HM}
Let $G$ be an Abelian group, $\phi\in\End(G)$ and $N,M\in\CC(G)$. If $N\leq M$, then $B_n(\phi,N)\leq B_n(\phi,M)$ and so $|C_n(\phi,N)|\geq|C_n(\phi,M)|$. Therefore, $H^\star(\phi,N)\geq H^\star(\phi,M)$. \hfill$\qed$
\end{lemma}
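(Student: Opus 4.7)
The plan is to unpack the three assertions in sequence, each of which follows directly from monotonicity of elementary set-theoretic and group-theoretic operations. The only content is that inverse images, intersections, and quotients all respect inclusions in the expected way.

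First I would verify the inclusion $B_n(\phi,N)\leq B_n(\phi,M)$. Since $N\leq M$, applying $\phi^{-k}$ (which is a monotone operation on subgroups, because $\phi^{-k}N=\{x\in G:\phi^k x\in N\}$) yields $\phi^{-k}N\leq\phi^{-k}M$ for every $k\in\N$. Intersecting these inclusions for $k=0,1,\ldots,n-1$ gives
\[
B_n(\phi,N)=\bigcap_{k=0}^{n-1}\phi^{-k}N\leq\bigcap_{k=0}^{n-1}\phi^{-k}M=B_n(\phi,M).
\]

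Next I would deduce the cardinality inequality for the cotrajectories. The inclusion $B_n(\phi,N)\leq B_n(\phi,M)$ yields a canonical surjective homomorphism
\[
C_n(\phi,N)=G/B_n(\phi,N)\twoheadrightarrow G/B_n(\phi,M)=C_n(\phi,M),
\]
and since both groups are finite (as noted just before \eqref{H*}), $|C_n(\phi,N)|\geq |C_n(\phi,M)|$ follows.

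Finally, to obtain $H^\star(\phi,N)\geq H^\star(\phi,M)$, I would take logarithms, divide by $n$, and pass to the limit as $n\to\infty$, using that both limits exist by Proposition \ref{C=C_n->ent*=0}(a). There is no real obstacle here; the statement is a pure monotonicity observation, recorded because it will be used to reduce suprema over $\CC(G)$ to suprema over suitable cofinal subfamilies in later sections.
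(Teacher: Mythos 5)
Your proof is correct and is precisely the ``easy computation'' the paper leaves to the reader: monotonicity of inverse images and intersections gives $B_n(\phi,N)\leq B_n(\phi,M)$, the induced surjection of finite quotients gives the cardinality inequality, and passing to the limit (which exists by Proposition \ref{C=C_n->ent*=0}) gives the inequality of the $H^\star$ values. Nothing is missing.
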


Now we can define \emph{adjoint algebraic entropy} of $\phi:G\to G$ the quantity
$$\ent^\star(\phi)=\sup\{H^\star(\phi,N): N\in \CC(G)\},$$ and \emph{adjoint algebraic entropy} of $G$ the quantity $$\ent^\star(G)=\sup\{\ent^\star(\phi):\phi\in\End(G)\};$$ both $\ent^\star(\phi)$ and $\ent^\star(G)$ are either non-negative real numbers or $\infty$. More precisely, by Proposition \ref{C=C_n->ent*=0}, $\ent^\star(\phi)$ and $\ent^\star(G)$ are either logarithms of positive integers or $\infty$.

\section{Examples}\label{first}

In this section we provide various examples of computation of the adjoint algebraic entropy of endomorphisms of Abelian groups. It is natural to order the examples according to the size and complexity of the family $\CC(G)$ of all subgroups of finite index of $G$.
Recall that the \emph{profinite} (or \emph{finite index}) \emph{topology} $\gamma_G$ of the Abelian group $G$ is the topology having $\mathcal C(G)$ as a base of neighborhoods of $0$, while the \emph{natural} (or \emph{$\Z$-adic}) \emph{topology}  $\nu_G$ of $G$ is the topology having as a base of neighborhoods of $0$ the countable family of subgroups $\{mG\}_{m\in \N_+}$ (see \cite{F}). Since every finite-index subgroup of $G$ contains a subgroup of the form $mG$, one has always $\gamma_G \leq \nu_G$.

\smallskip
For an Abelian group $G$, let us consider $G^1=\bigcap_{n\in\N}n G$, the \emph{first Ulm subgroup}, which is fully invariant in $G$. It is well-known (see \cite{F}) that $$G^1=\bigcap\{N\leq G:G/N\ \text{is finite}\}.$$
  It is easy to see that $G^1=0$ precisely when $\gamma_G$ (and consequently, also $ \nu_G$) is Hausdorff. 

\medskip
Let us start with the trivial case when $\CC(G)$ is as small as possible. 
 
\begin{example}\label{example1}
A group $D$ is divisible if and only if  $\CC(D)=\{D\}$. In this case $\ent^\star(\phi)=0$ for every $\phi\in\End(D)$, that is $\ent^\star(D)=0$, since $D$ has no proper subgroups of finite index.
\end{example}

The next lemma is needed in the proof of Theorem \ref{narrow:char}.

\begin{lemma}\label{>c}
Let $p$ be a prime and $G$ an infinite $p$-bounded Abelian group. Then $|\CC(G)|\geq\mathfrak c$.
\end{lemma}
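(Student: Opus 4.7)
The plan is to reduce the lemma to the case of an infinite $\mathbb F_p$-vector space and then to count hyperplanes via linear-algebra duality. First, by Pr\"ufer's theorem for bounded groups, $G = \bigoplus_{i \in I} C_i$ with each $C_i$ a nonzero cyclic $p$-group. Since each $C_i$ is finite and $G$ is infinite, the index set $I$ must be infinite. Passing to the quotient $V := G/pG \cong \bigoplus_{i \in I} \mathbb F_p$ yields an infinite-dimensional $\mathbb F_p$-vector space, and every $N \in \CC(V)$ pulls back under the canonical projection $G \to V$ to a distinct element of $\CC(G)$ containing $pG$. Hence it suffices to produce $\mathfrak c$ many subgroups of finite index in $V$.

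Next, fix a basis $\{e_i\}_{i \in I}$ of $V$. For each subset $S \subseteq I$, define a linear functional $f_S \colon V \to \mathbb F_p$ by $f_S(e_i) = 1$ if $i \in S$ and $f_S(e_i) = 0$ otherwise; this is well-defined because every vector of $V$ has finite support on the basis. Its kernel $H_S := \ker f_S$ is a subgroup of $V$ of index at most $p$, so $H_S \in \CC(V)$. Moreover, distinct subsets produce distinct kernels: if $i \in S \setminus S'$, then $e_i \in H_{S'} \setminus H_S$, whence $H_S \neq H_{S'}$. Consequently, the assignment $S \mapsto H_S$ is injective, yielding $|\CC(V)| \geq 2^{|I|} \geq 2^{\aleph_0} = \mathfrak c$.

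The only (minor) obstacle is ensuring in the reduction step that $V = G/pG$ is infinite; this follows immediately from the Pr\"ufer decomposition, since $V$ contains exactly one copy of $\mathbb F_p$ for each cyclic summand of $G$, and the number of such summands must be infinite whenever $G$ itself is infinite and bounded. If one prefers the convention that ``$p$-bounded'' already means $pG=0$, then the first paragraph collapses and the hyperplane-counting argument of the second paragraph applies directly to $G$.
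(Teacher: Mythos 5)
Your proof is correct and its core argument --- counting the kernels of the functionals $f_S$ indexed by subsets $S$ of a basis, and checking these kernels are pairwise distinct --- is exactly the argument in the paper, which writes $G\cong\Z(p)^{(\kappa)}$ and uses $\phi_X((x_i)_{i<\kappa})=\sum_{i\in X}x_i$. The paper uses the convention that $p$-bounded means $pG=0$, so your preliminary reduction via Pr\"ufer's theorem and $G/pG$ is not needed there, as you yourself anticipate in your final remark.
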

\begin{proof}
Since $G$ is infinite, $G\cong\Z(p)^{(\kappa)}$ for some cardinal $\kappa\geq\omega$.
Fixed any subset $X$ of $\kappa$, let $\phi_X:G\to \Z(p)$ be the map defined by $\phi_X((x_i)_{i<\kappa})=\sum_{i\in X}x_i$. Then $\ker\phi_X\in\CC(G)$ and $X\neq X'$, for $X,X'$ subsets of $\kappa$, implies $\ker\phi_X\neq \ker\phi_{X'}$. This shows that $G$ has at least $\mathfrak c$ different subgroups of finite index.
\end{proof}

It is an easy exercise to prove that $\CC(G)$ is finite if and only if $G/d(G)$ is a finite group, where $d(G)$ denotes the maximal divisible subgroup of $G$.

We characterize now the Abelian groups $G$ such that $\CC(G)$ is countable.

\begin{theorem}\label{narrow:char}
For an Abelian group $G$ the following conditions are equivalent: 
\begin{itemize}
\item[(a)] $\CC(G)$ is countable; 
\item[(b)] $|\CC(G)|< \cont$; 
\item[(c)] $G/pG$ is finite for every prime $p$;
\item[(d)] $G/mG$ is finite for every $m\in\N_+$;
\item[(e)] $\CC(G)$ contains a countable decreasing cofinal chain;
\item[(f)] the natural topology $\nu_G$ of $G$ coincides with the profinite topology $\gamma_G$ of $G$.
\end{itemize}
\end{theorem}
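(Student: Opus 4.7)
The plan is to prove the implications in the cycle $(a) \Rightarrow (b) \Rightarrow (c) \Rightarrow (d) \Rightarrow (e) \Rightarrow (a)$, together with the equivalence $(d) \Leftrightarrow (f)$ which falls out of the definitions of the topologies. The implication $(a) \Rightarrow (b)$ is of course immediate.

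For $(b) \Rightarrow (c)$, I argue by contraposition: if $G/pG$ is infinite for some prime $p$, then it is an infinite $p$-bounded Abelian group, so Lemma \ref{>c} gives $|\CC(G/pG)| \geq \mathfrak{c}$. Pulling back subgroups of finite index along the canonical surjection $\pi : G \to G/pG$ yields by Lemma \ref{trivial} an injection $\CC(G/pG) \hookrightarrow \CC(G)$, so $|\CC(G)| \geq \mathfrak{c}$, contradicting (b).

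The heart of the argument is $(c) \Rightarrow (d)$, which I expect to be the main technical obstacle. I would proceed in two steps. First, fix a prime $p$ and show by induction on $k$ that $G/p^kG$ is finite: the filtration $G \supseteq pG \supseteq p^2G \supseteq \cdots \supseteq p^kG$ has successive quotients $p^iG/p^{i+1}G$, each of which is the image of $G/pG$ under the map $x+pG \mapsto p^ix + p^{i+1}G$, hence finite. Second, reduce the general case to the prime-power case by induction on the number of prime factors of $m$. The key lemma is that for coprime $a,b \in \N_+$ one has $aG \cap bG = abG$: writing $1 = ar + bs$ and $x = ax_1 = bx_2 \in aG \cap bG$, one computes
\[
x = arx + bsx = ar(bx_2) + bs(ax_1) = ab(rx_2 + sx_1) \in abG.
\]
Thus for $m = p_1^{k_1}\cdots p_r^{k_r}$ one has an embedding $G/mG \hookrightarrow \bigoplus_{i} G/p_i^{k_i}G$ (using the coprime intersection identity repeatedly), and the right-hand side is finite by step one.

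For $(d) \Rightarrow (e)$, the chain $\{n!\, G\}_{n \in \N_+}$ is decreasing and, by (d), consists of finite-index subgroups; it is cofinal in $\CC(G)$ because any $N \in \CC(G)$ has exponent dividing some $m$ (e.g.\ $m = |G/N|$), so $mG \subseteq N$ and hence $n!\,G \subseteq N$ for any $n \geq m$. For $(e) \Rightarrow (a)$, if $\{N_k\}_{k \in \N}$ is such a cofinal chain, then every $N \in \CC(G)$ contains some $N_k$; since $G/N_k$ is finite it has only finitely many subgroups, so $\CC(G)$ is a countable union of finite sets. Finally, for $(d) \Leftrightarrow (f)$: since $\gamma_G \leq \nu_G$ always holds, the equality $\gamma_G = \nu_G$ is equivalent to each basic $\nu_G$-neighborhood $mG$ being $\gamma_G$-open, i.e.\ containing some $N \in \CC(G)$; but then $G/mG$ is a quotient of the finite $G/N$ and thus finite, and conversely if $G/mG$ is finite then $mG$ itself lies in $\CC(G)$. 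This gives $(d) \Leftrightarrow (f)$ and closes the proof.
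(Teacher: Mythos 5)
Your proposal is correct and follows essentially the same route as the paper: the same implication cycle, the same use of Lemma \ref{>c} and Lemma \ref{trivial} for (b)$\Rightarrow$(c), the same induction on $G/p^kG$, the chain $n!G$ for (d)$\Rightarrow$(e), and the same counting argument for (e)$\Rightarrow$(a). The only (harmless) difference is that you justify the reduction to prime powers by explicitly proving $aG\cap bG=abG$ for coprime $a,b$ and embedding $G/mG$ into $\bigoplus_i G/p_i^{k_i}G$, where the paper simply invokes the standard isomorphism $G/mG\cong\bigoplus_i G/p_i^{k_i}G$.
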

\begin{proof}
(a)$\Rightarrow$(b) and (d)$\Rightarrow$(c) are obvious.

\smallskip
(b)$\Rightarrow$(c) Assume that there exists a prime $p$ such that $G/pG$ is infinite. Then $|\CC(G/pG)|\geq \cont$ by Lemma \ref{>c}. Since the natural projection $G\to G/pG$ is surjective, Lemma \ref{trivial} yields $|\CC(G)|\geq \cont$. 

\smallskip
(c)$\Rightarrow$(d) Suppose that $G/pG$ is finite for every prime $p$. Consider $\dot p:G\to G$, the map induced by the multiplication by $p$. This endomorphism of $G$ induces $\overline{\dot p}:G/pG\to pG/p^2G$, which is well-defined and surjective. Consequently $pG/p^2G$ is finite; this implies that $G/p^2 G$ is finite. By induction it is possible to prove that $G/p^n G$ is finite for every $n\in\N_+$. Let $m\in\N_+$; then $m=p_1^{k_1}\cdot\ldots\cdot p_r^{k_r}$ for some primes $p_1,\ldots,p_r$ and some $k_1,\ldots, k_r\in\N_+$. Since $G/mG\cong G/p_1^{k_1}G\oplus\ldots\oplus G/p_r^{k_r}G$, it follows that $G/mG$ is finite.

\smallskip
(d)$\Rightarrow$(e) The required cofinal chain is $G\geq 2! G\geq 3!G\geq\ldots\geq n!G\geq\ldots$.

\smallskip
(e)$\Rightarrow$(a) Let $G\geq N_1\geq N_2\geq \ldots\geq N_n\geq\ldots$ be a countable decreasing cofinal chain in $\CC(G)$. For every $n\in\N_+$ let $\CC_n(G)=\{H\leq G: H\geq N_n\}$. Then $\CC_n(G)$ is finite for every $n\in\N_+$ and $\CC(G)=\bigcup_{n\in\N}\CC_n(G)$ is countable.

\smallskip
(d)$\Leftrightarrow$(f) If $G/mG$ is finite for every $m\in\N_+$, then $m G\in\CC(G)$ and so $\nu_G\leq\gamma_G$. Since in general $\gamma_G\leq\nu_G$, the two topologies coincide. Conversely, if $\nu_G\leq \gamma_G$, then for every $m\in\N_+$ there exists $N\in\CC(G)$ such that $mG\supseteq N$ and so $G/mG$ is finite.
\end{proof}

Call a group $G$ satisfying the above equivalent conditions {\em narrow}. It is easy to see that an Abelian group $G$ is narrow if and only if $G/d(G)$ is narrow.  
A reduced narrow Abelian group $G$ has the first Ulm subgroup $G^1=\bigcap_{n\in\N_+}nG$ vanishing; in fact, $G/pG$ finite for each prime $p$ implies that the $p$-primary component $t_p(G)$ is finite, hence $p^\omega G=\bigcap_{n\in\N_+}p^n G=0$, and consequently $G^1=0$.

\medskip
Let now $G$ be a reduced narrow Abelian group. Then its completion $\widetilde G$ with respect to the natural topology is compact and the completion topology coincides with its natural topology. The Abelian groups that are compact in their natural topology were described by Orsatti in \cite{O1} and investigated also in \cite{O2}. They have the form $G=\prod_p G_p$, where each $G_p$ is a finitely generated $\mathbb J_p$-module.

\medskip
We show below in Proposition \ref{narrow} that  $\ent^\star(G)=0$ for every narrow Abelian group. To this end we need the following lemma supplying a large scale of examples of endomorphisms with zero adjoint entropy. 

\begin{lemma}\label{invariant->H=0}
Let $G$ be an Abelian group, $\phi\in\End(G)$ and $A$ a $\phi$-invariant subgroup of $G$ of finite index. Then $H^\star(\phi,A)=0$.
\end{lemma}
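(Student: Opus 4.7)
The plan is to show that the sequence $B_n(\phi,A)$ is in fact constant equal to $A$, so that each $C_n(\phi,A)$ coincides with the fixed finite group $G/A$ and the defining limit in \eqref{H*} is trivially $0$.

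First I would unravel the $\phi$-invariance hypothesis $\phi(A)\subseteq A$: for any $x\in A$ one has $\phi(x)\in A$, i.e.\ $x\in\phi^{-1}A$, so $A\subseteq\phi^{-1}A$. Applying $\phi^{-1}$ to both sides of this inclusion repeatedly, one obtains the ascending chain
$$A\subseteq \phi^{-1}A\subseteq \phi^{-2}A\subseteq\cdots\subseteq\phi^{-n}A\subseteq\cdots$$
(an easy induction, since $X\subseteq Y$ implies $\phi^{-1}X\subseteq\phi^{-1}Y$). Intersecting the first $n$ terms of this chain gives $B_n(\phi,A)=A$ for every $n\in\N_+$, and hence $C_n(\phi,A)=G/A$ is a single finite group of order $[G:A]$, independent of $n$.

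Consequently
$$H^\star(\phi,A)=\lim_{n\to\infty}\frac{\log[G:A]}{n}=0,$$
which is the desired conclusion. Alternatively, one can invoke Proposition \ref{C=C_n->ent*=0}(b) directly: since $B(\phi,A)=\bigcap_n \phi^{-n}A=A=B_1(\phi,A)$, the cotrajectory stabilises at the first step, forcing $H^\star(\phi,A)=0$.

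There is no real obstacle here: the whole content of the lemma is that $\phi$-invariance makes the preimage chain $\{\phi^{-n}A\}$ ascending rather than descending, which collapses all the finite intersections $B_n(\phi,A)$ to $A$ itself. The only small care needed is to distinguish $\phi$-invariance ($\phi A\subseteq A$) from the stronger condition $\phi^{-1}A\subseteq A$; as observed above, the former is exactly what yields $A\subseteq \phi^{-1}A$, which is what the argument requires.
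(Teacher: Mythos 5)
Your proof is correct and follows essentially the same route as the paper: $\phi A\subseteq A$ gives $A\subseteq\phi^{-1}A$, hence $A\subseteq\phi^{-n}A$ for all $n$, so $B_n(\phi,A)=A$, $C_n(\phi,A)=G/A$ is a fixed finite group, and the limit is $0$. The only difference is that you spell out the inductive step for the ascending chain, which the paper leaves implicit.
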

\begin{proof}
Since $\phi A\subseteq A$, it follows that $\phi^{-1} A\supseteq A$. Then $B_n(\phi,A)=A$ for every $n\in\N_+$ and so $C_n(\phi,A)=G/A$ for every $n\in\N_+$. Consequently $H^\star(\phi,A)=0$.
\end{proof}

Now we apply the above lemma to get some easy examples.

\begin{example}\label{multiplication}
(a) For $G$ any Abelian group, $\ent^\star(id_G)=\ent^\star(0_G)=0$, by a trivial application of Lemma \ref{invariant->H=0}. Moreover, if an endomorphism $\phi:G\to G$ is nilpotent (i.e., there exists $n\in\N_+$ such that $\phi^n=0$) or periodic (i.e., there exists $n\in\N_+$ such that $\phi^n=id_G$), then $\ent^\star(\phi)=0$; this can be easily proved directly, or is a consequence of the next Lemma \ref{ll}.

\smallskip
(b) Let $G$ be an Abelian group, $m\in\Z$ and $\dot{m}:G\to G$ the endomorphism of $G$ defined by $x\mapsto mx$ for every $x\in G$. Then Lemma \ref{invariant->H=0} shows that $\ent^\star(\dot{m})=0$, since all subgroups of $G$ are all $\dot{m}$-invariant.
\end{example}

\begin{example}
Let $G$ be an Abelian group such that every subgroup of finite index contains a fully invariant subgroup of finite index. Then $\ent^\star(G)=0$.
Indeed, let $\phi\in\End(G)$ and let $N\in\CC(G)$. By hypothesis $N$ contains a fully invariant subgroup $A$ of $G$. In particular, $A$ is $\phi$-invariant and so $H^\star(\phi,A)=0$ by Lemma \ref{invariant->H=0}. By Lemma \ref{N<M->HN>HM} $H^\star(\phi,N)=0$ as well. Since $N$ is arbitrary, this proves that $\ent^\star(\phi)=0$.
\end{example}

\begin{proposition}\label{narrow}
Every narrow group $G$ satisfies $\ent^\star(G)=0$. 
\end{proposition}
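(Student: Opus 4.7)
The plan is to reduce any finite-index subgroup $N$ to one of the form $mG$, which is automatically $\phi$-invariant for every $\phi \in \End(G)$, and then invoke Lemma \ref{invariant->H=0} together with the anti-monotonicity of Lemma \ref{N<M->HN>HM}.

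In detail, I would fix an arbitrary $\phi \in \End(G)$ and an arbitrary $N \in \CC(G)$, and aim to show $H^\star(\phi,N)=0$. Since $G$ is narrow, Theorem \ref{narrow:char}(f) says the profinite topology $\gamma_G$ coincides with the natural topology $\nu_G$, so the chain $\{mG\}_{m \in \N_+}$ (which is a base of neighborhoods of $0$ in $\nu_G$) is also a base in $\gamma_G$. Equivalently, there exists $m \in \N_+$ with $mG \leq N$, and by condition (d) of the same theorem, $mG \in \CC(G)$.

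Now $mG$ is fully invariant in $G$ (every endomorphism of an abelian group commutes with multiplication by $m$), hence in particular $\phi$-invariant. Therefore Lemma \ref{invariant->H=0} gives $H^\star(\phi,mG)=0$. Applying the anti-monotonicity Lemma \ref{N<M->HN>HM} to the inclusion $mG \leq N$, we get $H^\star(\phi,N) \leq H^\star(\phi,mG)=0$, so $H^\star(\phi,N)=0$. Since $N \in \CC(G)$ was arbitrary, $\ent^\star(\phi)=0$, and since $\phi \in \End(G)$ was arbitrary, $\ent^\star(G)=0$.

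I do not expect any real obstacle here: the entire argument is a two-line combination of the invariance of $mG$ with the two lemmas. The only subtle point, more conceptual than technical, is to remember to use the characterization (f) of narrowness, which is exactly what guarantees that the fully invariant subgroups $\{mG\}$ are cofinal in $\CC(G)$ — a property that fails dramatically in general and is the reason one cannot hope to run this argument for an arbitrary abelian group.
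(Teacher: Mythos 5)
Your proof is correct and follows essentially the same route as the paper: find $m$ with $mG\leq N$, note $mG\in\CC(G)$ by narrowness and is fully invariant, then combine Lemma \ref{invariant->H=0} with the anti-monotonicity of Lemma \ref{N<M->HN>HM}. (The only cosmetic remark: the containment $mG\leq N$ holds for every $N\in\CC(G)$ in any Abelian group, since $|G/N|\cdot G\leq N$; narrowness is needed only to guarantee $mG\in\CC(G)$, so your appeal to condition (f) is harmless but not necessary for that step.)
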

\begin{proof}
If $N\in \CC(G)$, then there exists $m>0$ such that $N\geq mG$. Since $mG\in \CC(G)$ (as $G$ is narrow), and since $mG$ is fully invariant, we can apply Lemma \ref{invariant->H=0} to conclude that $\ent^\star(G)=0$. 
\end{proof}

As an application of the previous example we verify that the group of $p$-adic integers $\mathbb J_p$, for $p$ a prime, and every finite-rank torsion-free Abelian group have zero adjoint algebraic entropy.

\begin{example}\label{tf,fr->ent0}
(a) For $p$ a prime, $\ent^\star(\mathbb J_p)=0.$ Indeed, every subgroup of finite index of $\mathbb J_p$ contains some $p^n\mathbb J_p$, having finite index. Thus $\mathbb J_p$ is narrow, so Proposition \ref{narrow} applies.

\smallskip
(b) If $G$ is a torsion-free Abelian group of finite rank, then $\ent^\star(G)=0$.
In fact, for $N\in\CC(G)$, there exists $m\in\N_+$ such that $mG\leq N$. It is well-know that for an Abelian group $G$ of finite rank, $G/mG$ is finite. Then Proposition \ref{narrow} again applies.
\end{example}

Here is another example dealing with torsion-free Abelian groups.

\begin{example}\label{End(G)<Q}
Let $G$ be a torsion-free Abelian group. 

\smallskip
Assume first that $G=nG$ for some non-zero $n\in\Z$. Let us consider the endomorphism $(\frac{m}{n})^\cdot:G\to G$ defined by $x\mapsto \frac{m}{n}x$ for every $x\in G$, where $m\in\Z$ and $(m,n)=1$. 
We show that $\ent^\star((\frac{m}{n})^\cdot)=0$.
Let $N\in\CC(G)$. Then there exists $r\in\N_+$ such that $r G\subseteq N$. Since $n G=G$ we can suppose without loss of generality that $(r,n)=1$. Consequently, there exist $\gamma,\delta\in\N_+$ such that $1=\gamma r+\delta n$. Let $x\in N\cap n G$; then $x=n y$ for some $y\in G$. Since $y=\gamma r y+\delta n y=\gamma ry+\delta x\in N$, we get that $N\cap n G=n N$. But $n G =G$ and so $N=n N$. 
This shows that $N$ is $(\frac{m}{n})^\cdot$-invariant and Lemma \ref{invariant->H=0} implies that $H^\star((\frac{m}{n})^\cdot,N)=0$; hence $\ent^\star((\frac{m}{n})^\cdot)=0$.

\smallskip
From what we have seen above it follows that, if $\End(G)\subseteq \mathbb Q$, then $\ent^\star(G)=0$. 
\end{example}

Let $X$ be a set and $f:X\to X$ a function. A point $x\in X$ is \emph{quasi-periodic} for $f$ if there exist $s<t$ in $\N$ such that $f^s(x)=f^t(x)$. The function $f$ is \emph{locally quasi-periodic} if every point of $X$ is quasi-periodic and it is \emph{quasi-periodic} if $f^s=f^t$ for some $s<t$ in $\N$.

\begin{example}\label{qp->ent*=0}
Let $G$ be an Abelian group and $\phi\in\End(G)$. If $\phi$ is quasi-periodic, then $\ent^\star(\phi)=0$.
Indeed, by hypothesis there exist $n\in\N$, $m\in\N_+$ such that $\phi^n=\phi^{n+m}$. If $N\in\CC(G)$, then $\phi^{-n}N=\phi^{-n+m}N$ and so $B(\phi,N)=B_{n+m}(\phi,N)$. By Proposition \ref{C=C_n->ent*=0}(b), $H^\star(\phi,N)=0$, and since $N$ was arbitrary we conclude that $\ent^\star(\phi)=0$.
\end{example}

The converse implication of Example \ref{qp->ent*=0} does not hold true in general, as shown for example by any multiplication $\dot m:\Z\to \Z$, where $m\in\N$; in fact, $\ent^\star(\dot m)=0$ by Example \ref{multiplication}(b), but $\dot m$ is not quasi-periodic. But the converse implication holds true in the particular case of $p$-bounded Abelian groups, as shown by Theorem \ref{dich}, which, together with its corollary, will give the characterization of endomorphism of zero adjoint algebraic entropy.

\begin{example}\label{bernoulli-example}
Let $K$ be a non-zero Abelian group and consider:

\smallskip
(a) the \emph{right  Bernoulli shift} $\beta_K$ and the \emph{left Bernoulli shift} ${}_K\beta$ of the group $K^{\N}$ defined respectively by 
$$\beta_K(x_1,x_2,x_3,\ldots)=(0,x_1,x_2,\ldots)\ \mbox{and}\ {}_K\beta(x_0,x_1,x_2,\ldots)=(x_1,x_2,x_3,\ldots);$$

\smallskip
(a) the \emph{two-sided Bernoulli shift} $\overline{\beta}_K$ of the group $K^{\Z}$, defined by 
$$\overline\beta_K((x_n)_{n\in\Z})=(x_{n-1})_{n\in\Z}, \mbox{ for } (x_n)_{n\in\Z}\in K^{\Z}.$$

\smallskip
Since $K^{(\N)}$ is both $\beta_K$-invariant and ${}_K\beta$-invariant, and $K^{(\Z)}$ is $\overline\beta_K$-invariant, let $\beta_K^\oplus=\beta_K\restriction_{K^{(\N)}}$, ${}_K\beta^\oplus= {}_K\beta\restriction_{K^{(\N)}}$ and $\overline\beta_K^\oplus=\overline\beta_K\restriction_{K^{(\Z)}}$.

One can prove directly that $\ent^\star(\beta_K^\oplus)=\ent^\star({}_K\beta^\oplus)=\ent^\star(\overline\beta_K^\oplus)=\infty$; we prefer to postpone the proof, applying other results which simplify it (see Proposition \ref{bernoulli}).
\end{example}

Until now we have seen examples of group endomorphisms with adjoint algebraic entropy either zero or infinity. This behaviour is different to that of the algebraic entropy, which takes also finite positive values; for example, the right Bernoulli shift $\beta_K^\oplus:K^{(\N)}\to K^{(\N)}$ for a finite Abelian group $K$ has $\ent(\beta_K^\oplus)=\log|K|$ (see \cite[Example 1.9(a)]{DGSZ}). The reason of this fact will be clarified in Section \ref{dichotomy}.

\section{Basic properties}\label{basic}

In this section we prove basic properties of the adjoint algebraic entropy, in analogy with the properties of the algebraic entropy, that we collect in the following list (see \cite{W} and \cite{DGSZ}).
Let $G$ be an Abelian group and $\phi\in\End(G)$.
\begin{itemize}
\item[(A)] (Invariance under conjugation) If $\phi$ is conjugated to an endomorphism $\psi:H\to H$ of another Abelian group $H$, by an isomorphism, then $\ent(\phi) = \ent(\psi)$.
\item[(B)] (Logarithmic law) For every non-negative integer $k$, $\ent(\phi^k) = k \cdot \ent(\phi)$. If $\phi$ is an automorphism, then $\ent(\phi^k)=|k|\cdot\ent(\phi)$ for every integer $k$.
\item[(C)] (Addition Theorem) If $G$ is torsion group and $H$ is a $\phi$-invariant subgroup of $G$, then $\ent(\phi)=\ent(\phi\restriction_H)+\ent(\overline\phi)$, where $\overline\phi:G/H\to G/H$  is the endomorphism induced by $\phi$.
\item[(D)] (Continuity with respect to direct limits) If $G$ is direct limit of $\phi$-invariant subgroups $\{G_i : i \in I\}$, then $\ent(\phi)=\sup_{i\in I}\ent(\phi\restriction_{G_i})$.
\item[(E)] (Uniqueness) The algebraic entropy of endomorphisms of torsion Abelian groups is the unique collection $h = \{h_G : G \text{ torsion Abelian group}\}$ of functions $h_G:\End(G) \to \mathbb R_+$ satisfying (A), (B), (C), (D) and $h_{\Z(p)^{(\N)}}(\beta_{\Z(p)})=\log p$ for every  prime $p$ ($\mathbb R_+$ denotes the non-negative real numbers with the symbol $\infty$).
\end{itemize}

The Addition Theorem (C) for the algebraic entropy is one of the main results for the algebraic entropy, proved in \cite[Theorem 3.1]{DGSZ}.
As stated in (E), the set of four  properties (A), (B), (C), (D) is fundamental because it gives uniqueness for the algebraic entropy in the class of torsion Abelian groups. This theorem was inspired by the uniqueness theorem for the topological entropy of the continuous endomorphism of compact groups established by Stojanov \cite{St}.
 This motives us to study appropriate counterparts of these properties for the adjoint algebraic entropy.

\smallskip
Recall that for an Abelian group $G$ and $\phi\in\End(G)$, $\ent(\phi)=\ent(\phi\restriction_{t(G)})$. Moreover, it is proved in \cite[Theorem 1.1]{W} that for a torsion Abelian group $G=\bigoplus_p t_p(G)$, where $t_p(G)$ is the $p$-component of $G$, $\ent(\phi)=\sum_p \ent(\phi\restriction_{t_p(G)})$. Finally, \cite[Proposition 1.18]{DGSZ} shows that $\ent(\phi\restriction_{t_p(G)})=0$ if and only if $\ent(\phi\restriction_{G[p]})=0$, where $G[p]=\{x\in G: p x=0\}$ is the socle of $G$. From these results we immediately derive the next

\begin{lemma}\label{1.18}
Let $G$ be a torsion Abelian group and $\phi\in\End(G)$. Then $\ent(\phi)=0$ if and only if $\ent(\phi\restriction_{G[p]})=0$ for every prime $p$.\hfill $\qed$
\end{lemma}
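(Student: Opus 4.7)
The plan is to string together the three results cited just above the statement. Since $G$ is torsion, we have the primary decomposition $G = \bigoplus_p t_p(G)$, and each $p$-component $t_p(G)$ is fully invariant in $G$, so $\phi$ restricts to an endomorphism $\phi\restriction_{t_p(G)}$ of $t_p(G)$. Likewise $G[p]$ is fully invariant and $\phi$ restricts to $G[p]$; note also that $G[p] = t_p(G)[p]$, so restricting to $G[p]$ is the same as first restricting to $t_p(G)$ and then to its socle.

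First I would apply \cite[Theorem 1.1]{W}, recalled immediately before the statement, to get
\[
\ent(\phi) = \sum_p \ent(\phi\restriction_{t_p(G)}).
\]
Since each summand is a non-negative element of $\R_+$, the sum equals $0$ if and only if every summand equals $0$. Hence
\[
\ent(\phi) = 0 \iff \ent(\phi\restriction_{t_p(G)}) = 0 \text{ for every prime } p.
\]

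Next, for each prime $p$, I would apply \cite[Proposition 1.18]{DGSZ} (also recalled above) to the $p$-group $t_p(G)$ with the endomorphism $\phi\restriction_{t_p(G)}$. This gives
\[
\ent(\phi\restriction_{t_p(G)}) = 0 \iff \ent(\phi\restriction_{G[p]}) = 0,
\]
using $G[p] = t_p(G)[p]$. Combining the two equivalences yields the lemma.

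There is no real obstacle here: the statement is a direct packaging of the two cited results, the only thing to check being that the restrictions $\phi\restriction_{t_p(G)}$ and $\phi\restriction_{G[p]}$ make sense, which follows from full invariance of both subgroups, and that one may freely exchange $0 = \sum_p a_p$ with $a_p = 0$ for all $p$ when $a_p \in \R_+$ (with the convention that $\infty$ is absorbing, so any non-zero summand forces the sum to be non-zero).
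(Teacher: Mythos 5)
Your proof is correct and follows exactly the paper's intended derivation: the lemma is stated there without proof precisely because it is, as you say, the immediate combination of Weiss's formula $\ent(\phi)=\sum_p \ent(\phi\restriction_{t_p(G)})$ with \cite[Proposition 1.18]{DGSZ}, both recalled in the paragraph preceding the statement. Your added care about full invariance of $t_p(G)$ and $G[p]$ and about vanishing of a sum of non-negative terms in $\mathbb{R}_+\cup\{\infty\}$ is exactly what makes the ``immediately derive'' step rigorous.
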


For the sake of completeness we recall in the following proposition properties characterizing group endomorphisms of zero algebraic entropy.

\begin{proposition}\label{non-torsion->ent*=infty}\emph{\cite[Proposition 2.4]{DGSZ}}
Let $G$ be a $p$-bounded Abelian group. Then the following conditions are equivalent:
\begin{itemize}
\item[(a)]$\ent(\phi)=0$;
\item[(b)]$\phi$ is locally algebraic;
\item[(c)]$\phi$ is locally quasi-periodic;
\item[(d)]the $\phi$-trajectory of every $x\in G$ is finite.\hfill$\qed$
\end{itemize}
\end{proposition}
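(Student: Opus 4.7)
The plan is to exploit the fact that since $pG=0$, the group $G$ is an $\mathbb{F}_p$-vector space and $\phi$ is an $\mathbb{F}_p$-linear map; equivalently $G$ is an $\mathbb{F}_p[T]$-module via $T\cdot g=\phi(g)$, and the trajectory $T(\phi,\langle x\rangle)$ is exactly the cyclic submodule $\mathbb{F}_p[\phi]\cdot x=\mathrm{span}_{\mathbb{F}_p}\{x,\phi x,\phi^2 x,\dots\}$. I would organize the argument as the cycle (a)$\Rightarrow$(d)$\Leftrightarrow$(c)$\Leftrightarrow$(b)$\Rightarrow$(a), settling the easy block first and leaving (a)$\Rightarrow$(d) for last.

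For the easy block, (c)$\Leftrightarrow$(d) is a pigeonhole argument: a coincidence $\phi^s x=\phi^t x$ makes the orbit $\{\phi^n x\}_{n\geq 0}$ eventually periodic, so finite, and its $\mathbb{F}_p$-span is then finite; conversely, a finite trajectory forces two iterates to coincide. The equivalence (b)$\Leftrightarrow$(d) is the standard dictionary for cyclic $\mathbb{F}_p[T]$-modules: a non-zero annihilating polynomial $q(T)$ for $x$ realises $\mathbb{F}_p[\phi]\cdot x$ as a quotient of $\mathbb{F}_p[T]/(q(T))$, hence finite; conversely, a finite-dimensional orbit makes $x,\phi x,\dots,\phi^d x$ linearly dependent for some $d$, yielding an algebraic relation.

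The implication (d)$\Rightarrow$(a) is also quick: for any finite subgroup $F=\langle x_1,\dots,x_k\rangle$ of $G$, the sum $S=\sum_{i=1}^k T(\phi,\langle x_i\rangle)$ is a $\phi$-invariant finite subgroup containing $F$, so $T_n(\phi,F)\subseteq S$ has cardinality bounded independently of $n$, whence $H(\phi,F)=0$; taking the supremum over finite $F$ yields $\ent(\phi)=0$.

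The main content is (a)$\Rightarrow$(d), which I would prove contrapositively. The key ingredient is a stabilization principle: if some power $\phi^k x$ lies in $\mathrm{span}(x,\phi x,\dots,\phi^{k-1}x)$, then applying $\phi$ repeatedly and using $\mathbb{F}_p$-linearity shows $\phi^n x$ stays in that span for every $n\geq k$, so the trajectory is finite. Contrapositively, an infinite trajectory forces $x,\phi x,\phi^2 x,\dots$ to be $\mathbb{F}_p$-linearly independent, so $\dim_{\mathbb{F}_p} T_n(\phi,\langle x\rangle)=n$ for every $n$, whence
\[
H(\phi,\langle x\rangle)=\lim_{n\to\infty}\frac{n\log p}{n}=\log p>0,
\]
contradicting $\ent(\phi)=0$. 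This stabilization lemma is the one place where $p$-boundedness is genuinely used, and I expect it to be the only real (though mild) subtlety: without the $\mathbb{F}_p$-linearity of $\phi$, the dichotomy between $\dim T_n=n$ and $\dim T_n$ eventually constant breaks down, and nothing prevents the trajectory dimensions from growing sublinearly.
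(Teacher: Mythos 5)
Your proof is correct: the linear-independence dichotomy for the iterates $x,\phi x,\phi^2x,\dots$ over $\mathbb F_p$ (giving either $|T_n(\phi,\langle x\rangle)|=p^n$ for all $n$ or an eventually stationary trajectory) is exactly the standard argument, and your identification of $p$-boundedness as the ingredient making the stabilization step work is the right observation. Note that the paper itself offers no proof here --- it quotes the statement from \cite[Proposition 2.4]{DGSZ} --- so there is nothing to diverge from; your write-up is a sound self-contained substitute.
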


The above proposition shows the relevance of the notion of local   quasi periodicity in the context of endomorphisms of Abelian groups.

\medskip
The first property of the adjoint algebraic entropy that we prove is the analogue of property (A) above, that is, invariance under conjugation.

\begin{lemma}\label{cbi}
Let $G$ be an Abelian group and $\phi\in\End(G)$. If $H$ is another Abelian group and $\xi:G\to H$ an isomorphism, then $\ent^\star(\xi\circ\phi\circ\xi^{-1})=\ent^\star(\phi)$.
\end{lemma}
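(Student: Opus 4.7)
Set $\psi=\xi\circ\phi\circ\xi^{-1}\in\End(H)$. My plan is to show that the isomorphism $\xi$ sets up a bijective correspondence between the cofinite subgroups of $G$ and those of $H$ that is compatible with the cotrajectory construction, after which the equality of the suprema defining $\ent^\star$ is immediate.

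First I would verify that the map $N\mapsto \xi(N)$ is a bijection $\CC(G)\to\CC(H)$. This is clear because $\xi$ is an isomorphism: $G/N\cong H/\xi(N)$, so $\xi(N)\in\CC(H)$, and the inverse bijection is $M\mapsto \xi^{-1}(M)$ (this also follows from Lemma \ref{trivial} applied to $\xi$ and $\xi^{-1}$).

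Next I would compute, for any $N\in\CC(G)$ and $k\in\N$, that
\begin{equation*}
\psi^{-k}(\xi(N))=(\xi\phi^k\xi^{-1})^{-1}(\xi(N))=\xi(\phi^{-k}(N)),
\end{equation*}
which gives $B_n(\psi,\xi(N))=\xi(B_n(\phi,N))$ for every $n\in\N_+$ by intersecting. Since $\xi$ is an isomorphism, it induces an isomorphism $C_n(\phi,N)=G/B_n(\phi,N)\to H/\xi(B_n(\phi,N))=C_n(\psi,\xi(N))$, whence $|C_n(\psi,\xi(N))|=|C_n(\phi,N)|$ for all $n$. Dividing by $n$ and taking the limit yields $H^\star(\psi,\xi(N))=H^\star(\phi,N)$.

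Finally, taking the supremum over $N\in\CC(G)$ on the left, and using that $\xi$ restricts to a bijection $\CC(G)\to\CC(H)$, yields
\begin{equation*}
\ent^\star(\phi)=\sup_{N\in\CC(G)}H^\star(\phi,N)=\sup_{N\in\CC(G)}H^\star(\psi,\xi(N))=\sup_{M\in\CC(H)}H^\star(\psi,M)=\ent^\star(\psi).
\end{equation*}
There is no genuine obstacle here: the only step requiring any care is checking that taking inverse images under $\psi$ commutes with $\xi$, which is a formal consequence of the definition of $\psi$.
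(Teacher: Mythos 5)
Your proof is correct and is essentially the paper's own argument: the paper works with $N\in\CC(H)$ and the identity $B_n(\theta,N)=\xi(B_n(\phi,\xi^{-1}N))$, while you run the same correspondence in the opposite direction starting from $N\in\CC(G)$. The substance — $\xi$ intertwines preimages, hence identifies the cotrajectories, hence the suprema agree — is identical.
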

\begin{proof}
Let $N$ be a subgroup of $H$ and call $\theta=\xi\circ\phi\circ\xi^{-1}$. Then:
\begin{itemize}
\item[-] $H/N$ is finite if and only if $G/\xi^{-1}N$ is finite, and
\item[-] $B_n(\theta,N)=\xi(B_n(\phi,\xi^{-1}N)$ for every $n\in\N$, since $\theta^n=\xi\circ\phi^n\circ\xi^{-1}$ for every $n\in\N$.
\end{itemize}
Therefore, for every $n\in\N_+$, 
$$\card{C_n(\theta,N)}=\card{\frac{H}{B_n(\theta,N)}}=\card{\frac{\xi G}{\xi B_n(\phi,\xi^{-1}N)}}=\card{\frac{G}{B_n(\phi,\xi^{-1}N)}}=\card{C_n(\phi,\xi^{-1}N)}.$$ Consequently $H^\star(\theta,N)=H^\star(\phi,\xi^{-1}N)$ for every $N\in\CC(H)$, and hence $\ent^\star(\theta)=\ent^\star(\phi)$.
\end{proof}

The second property is a logarithmic law for the adjoint algebraic entropy, which is the counterpart of property (B) for the algebraic entropy.

\begin{lemma}\label{ll}
Let $G$ be an Abelian group and $\phi\in\End(G)$. Then $\ent^\star(\phi^k)=k\cdot \ent^\star(\phi)$ for every $k\in\N_+$.
\end{lemma}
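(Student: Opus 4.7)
The plan is to prove two inequalities by comparing the ``sparse'' intersection $B_n(\phi^k,N)=\bigcap_{i=0}^{n-1}\phi^{-ik}N$ with the ``dense'' intersections $B_m(\phi,N)=\bigcap_{j=0}^{m-1}\phi^{-j}N$, exploiting the fact that the index set $\{0,k,2k,\ldots,(n-1)k\}$ used by $\phi^k$ is contained in the consecutive range $\{0,1,\ldots,(n-1)k\}$ used by $\phi$, and in turn covers the full range $\{0,1,\ldots,nk-1\}$ once $N$ is replaced by a suitable refinement.

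For the upper bound $\ent^\star(\phi^k)\leq k\cdot\ent^\star(\phi)$, fix $N\in\CC(G)$. From the inclusion of index sets mentioned above one gets $B_{(n-1)k+1}(\phi,N)\subseteq B_n(\phi^k,N)$, so $C_n(\phi^k,N)$ is a quotient of $C_{(n-1)k+1}(\phi,N)$ and hence
$$\frac{\log|C_n(\phi^k,N)|}{n}\leq \frac{(n-1)k+1}{n}\cdot\frac{\log|C_{(n-1)k+1}(\phi,N)|}{(n-1)k+1}.$$
Letting $n\to\infty$ yields $H^\star(\phi^k,N)\leq k\cdot H^\star(\phi,N)$; taking the supremum over $N\in\CC(G)$ gives the desired inequality.

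For the lower bound, the key idea is the refinement trick: given $N\in\CC(G)$, set $N'=B_k(\phi,N)\in\CC(G)$ (which lies in $\CC(G)$ because $\CC(G)$ is closed under finite intersections). A direct calculation shows
$$B_n(\phi^k,N')=\bigcap_{i=0}^{n-1}\phi^{-ik}\bigcap_{j=0}^{k-1}\phi^{-j}N=\bigcap_{s=0}^{nk-1}\phi^{-s}N=B_{nk}(\phi,N),$$
since $\{ik+j:0\leq i<n,\ 0\leq j<k\}=\{0,1,\ldots,nk-1\}$. Consequently $|C_n(\phi^k,N')|=|C_{nk}(\phi,N)|$, hence
$$H^\star(\phi^k,N')=\lim_{n\to\infty}\frac{\log|C_{nk}(\phi,N)|}{n}=k\cdot H^\star(\phi,N).$$
Taking the supremum over $N\in\CC(G)$ produces $\ent^\star(\phi^k)\geq k\cdot\ent^\star(\phi)$, and combined with the upper bound this finishes the proof.

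No step looks genuinely hard; the only mildly delicate point is choosing the right refinement $N'=B_k(\phi,N)$ and verifying the index-set identity $\{ik+j\}=\{0,\ldots,nk-1\}$, which is what makes the lower bound work without losing a constant. The values $\infty$ on either side are handled by the same inequalities (one side being infinite forces the other to be infinite as well), so no separate case analysis is needed.
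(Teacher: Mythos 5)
Your proof is correct and follows essentially the same route as the paper's: the refinement $N'=B_k(\phi,N)$ and the identity $C_n(\phi^k,B_k(\phi,N))=C_{nk}(\phi,N)$ are exactly the paper's equation \eqref{Cnkf=Cnfk}, and they drive the lower bound identically. The only cosmetic difference is in the upper bound, where the paper reuses that same identity together with the anti-monotonicity Lemma \ref{N<M->HN>HM} to get $H^\star(\phi^k,N)\leq H^\star(\phi^k,B_k(\phi,N))=k\cdot H^\star(\phi,N)$, whereas you compare $B_n(\phi^k,N)$ directly with $B_{(n-1)k+1}(\phi,N)$; both arguments yield the same per-$N$ inequality.
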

\begin{proof}
For $N\in\CC(G)$, fixed $k\in\N$, for every $n\in\N$ we have
\begin{equation}\label{Cnkf=Cnfk}
C_{nk}(\phi,N )=C_n(\phi^k,B_k(\phi,N)).
\end{equation}
Then
\begin{align*}
k\cdot H^\star(\phi,N)&=k\cdot\lim_{n\to \infty}\frac{\log\card{C_{nk}(\phi,N)}}{nk}=\lim_{n\to \infty}\frac{\log\card{C_{nk}(\phi,N)}}{n}=\\
&=\lim_{n\to\infty}\frac{\log\card{C_n(\phi^k,B_k(\phi,N))}}{n}=H^\star(\phi^k,B_k(\phi,N))\leq\ent^\star(\phi^k).
\end{align*}
Consequently $k\cdot \ent^\star(\phi)\leq\ent^\star(\phi^k)$.

Now we prove the converse inequality. Indeed, for $N\in\CC(G)$ and for $k\in\N_+$,
\begin{align*}
\ent^\star(\phi)\geq H^\star(\phi,N)&=\lim_{n\to\infty}\frac{\log\card{C_{nk}(\phi,N)}}{nk}=\lim_{n\to\infty}\frac{\log\card{C_n(\phi^k,B_k(\phi,N))}}{nk}=\\
&=\frac{H^\star(\phi^k,B_k(\phi,N))}{k}\geq \frac{H^\star(\phi^k,N)}{k},
\end{align*}
where in the second equality we have applied \eqref{Cnkf=Cnfk} and in the last inequality Lemma \ref{N<M->HN>HM}. This proves that $k\cdot \ent^\star(\phi)\geq \ent^\star(\phi^k)$ and this concludes the proof.
\end{proof}

The next lemma shows that a group automorphism has the same adjoint algebraic entropy as its inverse.

\begin{lemma}
Let $G$ be an Abelian group and let $\phi\in\Aut(G)$. Then $\ent^\star(\phi)=\ent^\star(\phi^{-1})$.
\end{lemma}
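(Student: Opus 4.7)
The plan is to exploit the fact that automorphisms of $G$ induce isomorphisms on quotients, so the cotrajectory sizes for $\phi$ and $\phi^{-1}$ coincide index by index.

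Fix $N\in\CC(G)$ and $n\in\N_+$. First I would rewrite the basic subgroups defining the cotrajectories: by definition
$$B_n(\phi,N)=\bigcap_{i=0}^{n-1}\phi^{-i}N,\qquad B_n(\phi^{-1},N)=\bigcap_{i=0}^{n-1}(\phi^{-1})^{-i}N=\bigcap_{i=0}^{n-1}\phi^{i}N,$$
where in the second identity I use that $\phi$ is bijective, so $(\phi^{-1})^{-i}N=\phi^{i}N$ as subgroups of $G$. The key observation is that applying the automorphism $\phi^{n-1}$ to $B_n(\phi,N)$ yields
$$\phi^{n-1}\bigl(B_n(\phi,N)\bigr)=\bigcap_{i=0}^{n-1}\phi^{n-1-i}N=\bigcap_{j=0}^{n-1}\phi^{j}N=B_n(\phi^{-1},N),$$
after the reindexing $j=n-1-i$.

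Since $\phi^{n-1}\in\Aut(G)$, it maps $G$ bijectively to itself and $B_n(\phi,N)$ bijectively to $B_n(\phi^{-1},N)$; hence it induces a group isomorphism
$$C_n(\phi,N)=G/B_n(\phi,N)\ \longrightarrow\ G/B_n(\phi^{-1},N)=C_n(\phi^{-1},N).$$
In particular $|C_n(\phi,N)|=|C_n(\phi^{-1},N)|$ for every $n\in\N_+$, so passing to the limit in the definition \eqref{H*} gives $H^\star(\phi,N)=H^\star(\phi^{-1},N)$.

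Finally, since $\phi$ is an automorphism, $N\mapsto N$ is a bijection of $\CC(G)$ onto itself (trivially), so taking the supremum over $N\in\CC(G)$ of the equality $H^\star(\phi,N)=H^\star(\phi^{-1},N)$ yields $\ent^\star(\phi)=\ent^\star(\phi^{-1})$. There is essentially no obstacle here; the only point to keep in mind is that the use of the automorphism hypothesis occurs twice, first in identifying $(\phi^{-1})^{-i}N=\phi^{i}N$ and second in using $\phi^{n-1}$ as an isomorphism to transfer the index.
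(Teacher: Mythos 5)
Your proof is correct and follows essentially the same route as the paper: both rest on the identity $\phi^{n-1}B_n(\phi,N)=B_n(\phi^{-1},N)$ and the fact that the automorphism $\phi^{n-1}$ then induces an isomorphism of the corresponding cotrajectories, giving $|C_n(\phi,N)|=|C_n(\phi^{-1},N)|$ for every $n$. Nothing further is needed.
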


\begin{proof}
For every $n\in\N_+$ and every $N\in\CC(G)$, we have
\begin{align*} 
\phi^{n-1}B_n(\phi,N) &=\phi^{n-1}(N\cap\phi^{-1}N\cap\ldots\cap\phi^{-n+1}N)=\\
&=\phi^{n-1}N\cap\phi^{n-2}N\cap\ldots\cap \phi N\cap N=B_n(\phi^{-1},N),
\end{align*}
and so 
$$\card{C_n(\phi,N)}=\card{\frac{G}{B_n(\phi,N)}}=\card{\frac{\phi^{n-1}G}{\phi^{n-1}B_n(\phi,N)}}=\card{\frac{G}{B_n(\phi^{-1},N)}}=\card{C_n(\phi^{-1},N)}.$$ 
Therefore $H^\star(\phi,N)=H^\star(\phi^{-1},N)$, and hence $\ent^\star(\phi)=\ent^\star(\phi^{-1})$.
\end{proof}

The following corollary is a direct consequence of the previous two results.

\begin{corollary}
Let $G$ be an Abelian group and $\phi\in\Aut(G)$. Then $\ent^\star(\phi^k)=|k|\cdot \ent^\star(\phi)$ for every $k\in\Z$. \hfill$\qed$
\end{corollary}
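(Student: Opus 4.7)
The plan is to split into the three cases $k>0$, $k=0$, and $k<0$ and assemble the result from the two preceding lemmas.

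For $k>0$, the logarithmic law (Lemma \ref{ll}) gives $\ent^\star(\phi^k)=k\cdot\ent^\star(\phi)=|k|\cdot\ent^\star(\phi)$ directly, with no use of invertibility. For $k=0$, we have $\phi^0=id_G$, and Example \ref{multiplication}(a) yields $\ent^\star(id_G)=0=|0|\cdot\ent^\star(\phi)$, so the identity holds trivially.

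The interesting case is $k<0$, and this is where both preceding results combine. Writing $k=-\ell$ with $\ell\in\N_+$, we have $\phi^k=(\phi^{-1})^{\ell}$ since $\phi\in\Aut(G)$. Applying Lemma \ref{ll} to the automorphism $\phi^{-1}$ gives $\ent^\star((\phi^{-1})^{\ell})=\ell\cdot\ent^\star(\phi^{-1})$, and then the immediately preceding lemma (which asserts $\ent^\star(\phi^{-1})=\ent^\star(\phi)$) yields
\[
\ent^\star(\phi^k)=\ent^\star((\phi^{-1})^\ell)=\ell\cdot\ent^\star(\phi^{-1})=\ell\cdot\ent^\star(\phi)=|k|\cdot\ent^\star(\phi),
\]
completing the proof.

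There is no real obstacle here, as the statement is literally a packaging of the two lemmas on the previous page; one only needs to notice that invertibility is what allows $\phi^k$ with $k<0$ to be identified with a positive power of $\phi^{-1}$, and that the $k=0$ case needs a one-line separate justification via $\ent^\star(id_G)=0$.
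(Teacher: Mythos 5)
Your proof is correct and follows exactly the route the paper intends: the corollary is stated there as a direct consequence of the logarithmic law (Lemma \ref{ll}) and the lemma $\ent^\star(\phi^{-1})=\ent^\star(\phi)$, combined via $\phi^{k}=(\phi^{-1})^{-k}$ for $k<0$. Your explicit handling of the $k=0$ case via $\ent^\star(id_G)=0$ is a sensible small addition, since Lemma \ref{ll} is stated only for $k\in\N_+$.
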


Now we give various weaker forms of the Addition Theorem for the adjoint algebraic entropy, inspired by the Addition Theorem for the algebraic entropy (see property (C) above). This theorem will be proved in the next section in Proposition \ref{AT*}.

The next property, a sort of monotonicity law for quotients over invariant subgroups, will be often used in the sequel.

\begin{lemma}\label{quotient}
Let $G$ be an Abelian group, $\phi\in\End(G)$ and $H$ a $\phi$-invariant subgroup of $G$. Then $\ent^\star(\phi)\geq \ent^\star(\overline\phi)$, where $\overline \phi:G/H\to G/H$ is the endomorphism induced by $\phi$.
\end{lemma}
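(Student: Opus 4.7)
The plan is to exhibit, for every cofinite subgroup $\overline N \leq G/H$, a corresponding cofinite subgroup $N \leq G$ such that $H^\star(\phi, N) = H^\star(\overline\phi, \overline N)$, and then take the supremum.

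Let $\pi: G \to G/H$ be the canonical projection and, given $\overline N \in \CC(G/H)$, set $N = \pi^{-1}(\overline N)$. By Lemma \ref{trivial} applied to the surjection $\pi$, we have $N \in \CC(G)$, and moreover $N \supseteq H$. The key observation is that $\pi$ intertwines preimages under $\phi$ and $\overline\phi$: since $\overline\phi \circ \pi = \pi \circ \phi$, for any subset $\overline S \subseteq G/H$ one has $\phi^{-i}(\pi^{-1}\overline S) = \pi^{-1}(\overline\phi^{-i}\overline S)$. Applying this to $\overline S = \overline N$ and intersecting, I would get
\begin{equation*}
B_n(\phi, N) = \bigcap_{i=0}^{n-1} \phi^{-i}(\pi^{-1}\overline N) = \pi^{-1}\Bigl(\bigcap_{i=0}^{n-1}\overline\phi^{-i}\overline N\Bigr) = \pi^{-1}\bigl(B_n(\overline\phi,\overline N)\bigr).
\end{equation*}

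Because $H \subseteq N \subseteq B_n(\phi,N)$ (using that $H$ is $\phi$-invariant, hence contained in every $\phi^{-i}N$), the third isomorphism theorem gives
\begin{equation*}
C_n(\phi,N) = \frac{G}{B_n(\phi,N)} \cong \frac{G/H}{B_n(\phi,N)/H} = \frac{G/H}{B_n(\overline\phi,\overline N)} = C_n(\overline\phi,\overline N).
\end{equation*}
In particular $\card{C_n(\phi,N)} = \card{C_n(\overline\phi,\overline N)}$ for every $n \in \N_+$, and passing to the limit yields $H^\star(\phi,N) = H^\star(\overline\phi,\overline N)$.

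Since this holds for an arbitrary $\overline N \in \CC(G/H)$, taking the supremum gives $\ent^\star(\overline\phi) \leq \ent^\star(\phi)$. There is no serious obstacle: the only subtlety is to verify carefully that $\phi$-invariance of $H$ guarantees $H \subseteq \pi^{-1}(\overline\phi^{-i}\overline N)$ so that the quotient $G/B_n(\phi,N)$ can be rewritten as a quotient of $G/H$, which is what makes the cotrajectories match identically rather than only up to a bounded factor.
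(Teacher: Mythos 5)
Your proof is correct and follows essentially the same route as the paper's: both pull back a cofinite subgroup $\overline N$ of $G/H$ to $N=\pi^{-1}(\overline N)\in\CC(G)$ and compare cotrajectories, except that you establish the exact identity $B_n(\phi,N)=\pi^{-1}\bigl(B_n(\overline\phi,\overline N)\bigr)$ (hence $H^\star(\phi,N)=H^\star(\overline\phi,\overline N)$) where the paper contents itself with the inequality $B_n(\overline\phi,N/H)\geq (B_n(\phi,N)+H)/H$, which already suffices. The only slip is the written inclusion chain ``$H\subseteq N\subseteq B_n(\phi,N)$'': the second inclusion is backwards, and what you actually need (and do have, since $B_n(\phi,N)=\pi^{-1}\bigl(B_n(\overline\phi,\overline N)\bigr)\supseteq\ker\pi=H$) is $H\subseteq B_n(\phi,N)\subseteq N$.
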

\begin{proof}
Let $N/H\in\CC(G/H)$; then $N\in\CC(G)$. Fixed $n\in\N_+$,
$$B_n\left(\overline \phi, \frac{N}{H}\right)\geq \frac{B_n(\phi,N)+H}{H},$$
and so 
$$\left|C_n\left(\overline\phi,\frac{N}{H}\right)\right|\leq 
\left|\frac{\frac{G}{H}}{\frac{B_n(\phi,N)+H}{H}}\right|\leq
\left|\frac{G}{B_n(\phi,N)}\right|=|C_n(\phi,N)|.$$
This yields $H^\star(\overline\phi, N/H)\leq H^\star(\phi,N)$ and so $\ent^\star(\overline\phi)\leq\ent^\star(\phi)$.
\end{proof}

In general $\ent^\star(-)$ fails to be monotone with respect to taking restrictions to $\phi$-invariant subgroups (i.e.,  if $G$ is an Abelian group, $\phi\in\End(G)$ and $H$ a $\phi$-invariant subgroup of $G$, then the inequality $\ent^\star(\phi)\geq\ent^\star(\phi\restriction_H)$ may fail), as the following easy example shows. 

\begin{example}\label{noAT*}
Let $G$ be an Abelian group that admits a $\phi\in\End(G)$ with $\ent^\star(\phi)>0$. Then the divisible hull $D$ of $G$ has $\ent^\star(D)=0$ being divisible, by Example \ref{example1}.
\end{example}

This is not surprising, because this is the dual situation with respect to monotonicity for quotients of the algebraic entropy, which holds only for torsion Abelian groups (see \cite[fact (f) in Section 1]{DGSZ}).

Actually, the preceding properties show that the adjoint algebraic entropy has a behaviour which can be considered as dual   for many aspects to that of the algebraic entropy.
Furthermore, if we impose a strong condition on the invariant subgroup $H$, we obtain the searched monotonicity for invariant subgroups:

\begin{lemma}\label{subgroup}
Let $G$ be an Abelian group, $\phi\in\End(G)$ and $H$ a $\phi$-invariant subgroup of $G$. If $H\in\CC(G)$, then $\ent^\star(\phi)=\ent^\star(\phi\restriction_H)$.
\end{lemma}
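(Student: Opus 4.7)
The plan is to prove two inequalities, using as the key observation that when one starts with a subgroup $M \subseteq H$ of finite index in $H$, the cotrajectories computed in $G$ and in $H$ differ only by the constant factor $[G:H]$, which disappears in the limit.

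First I would handle the inequality $\ent^\star(\phi\restriction_H)\leq\ent^\star(\phi)$. Let $\psi=\phi\restriction_H$ and pick any $M\in\CC(H)$. Because $[G:H]$ is finite, $[G:M]=[G:H]\cdot[H:M]$ is finite too, so $M\in\CC(G)$ and both $H^\star(\psi,M)$ and $H^\star(\phi,M)$ make sense. The crucial point is that $M\subseteq H$ forces $B_n(\phi,M)\subseteq M\subseteq H$; computing in $H$, one checks directly from the definitions that
\[
B_n(\psi,M)=\{h\in H:h,\phi h,\ldots,\phi^{n-1}h\in M\}=H\cap B_n(\phi,M)=B_n(\phi,M).
\]
Hence $|C_n(\phi,M)|=[G:B_n(\phi,M)]=[G:H]\cdot[H:B_n(\psi,M)]=[G:H]\cdot|C_n(\psi,M)|$, so
\[
H^\star(\phi,M)=\lim_{n\to\infty}\frac{\log[G:H]+\log|C_n(\psi,M)|}{n}=H^\star(\psi,M),
\]
since $\log[G:H]/n\to 0$. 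Taking the supremum over $M\in\CC(H)$ gives the desired inequality.

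For the reverse inequality $\ent^\star(\phi)\leq\ent^\star(\psi)$, I would take an arbitrary $N\in\CC(G)$ and replace it by $N\cap H$. Since $(H+N)/N$ is a subgroup of the finite group $G/N$ and $H/(N\cap H)\cong(H+N)/N$, we have $N\cap H\in\CC(H)$. The anti-monotonicity of $H^\star(\phi,\cdot)$ (Lemma \ref{N<M->HN>HM}) applied to $N\cap H\leq N$ yields $H^\star(\phi,N)\leq H^\star(\phi,N\cap H)$, and the computation of the previous paragraph (with $M=N\cap H$) gives $H^\star(\phi,N\cap H)=H^\star(\psi,N\cap H)\leq\ent^\star(\psi)$. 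Taking the supremum over $N\in\CC(G)$ finishes the proof.

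There is no serious obstacle; the only subtle point is the identity $B_n(\psi,M)=B_n(\phi,M)$, which relies on $\phi$-invariance of $H$ only through the implicit need for $\phi^i$ to restrict to $H$ so that $\psi^i=\phi^i\restriction_H$, and on $M\subseteq H$ to absorb the intersection with $H$. Once this identity is in hand, the rest is a book-keeping of finite indices and an application of the anti-monotonicity lemma.
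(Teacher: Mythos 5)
Your proof is correct and follows essentially the same route as the paper: the key identity $B_n(\phi\restriction_H,M)=B_n(\phi,M)$ for $M\in\CC(H)$ (the paper states it as $B_n(\phi\restriction_H,M)=B_n(\phi,M)\cap H$) together with the observation that $\card{C_n(\phi,M)}$ and $\card{C_n(\phi\restriction_H,M)}$ differ by the bounded factor $[G:H]$, which vanishes in the limit. Your second paragraph, passing from an arbitrary $N\in\CC(G)$ to $N\cap H\in\CC(H)$ via the anti-monotonicity lemma, is a cofinality step the paper leaves implicit, so your write-up is if anything slightly more complete.
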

\begin{proof}
Let $N\in\CC(H)$. Since $H$ has finite index in $G$, and since $G/H\cong (G/N)/(H/N)$, $N$ has finite index in $G$ as well. 
It is possible to prove by induction on $n\in\N_+$ that
$$B_n(\phi\restriction_H,N)=B_n(\phi,N)\cap H.$$
Then, for every $n\in\N_+$
\begin{align*}
C_n(\phi,N)&=\frac{G}{B_n(\phi,N)}\geq\frac{H+B_n(\phi,N)}{B_n(\phi,N)}\cong\\
&\cong\frac{H}{B_n(\phi,N)\cap H}=\frac{H}{B_n(\phi\restriction_H,N)}=C_n(\phi\restriction_H,N),
\end{align*}
and so
$\ent^\star(\phi)\geq H^\star(\phi,N)\geq H^\star(\phi\restriction_H,N)$, that implies $\ent^\star(\phi)\geq\ent^\star(\phi\restriction_H)$.
For $c_n=\card{C_n(\phi,N)}$ and $c_n'= \card{C_n(\phi\restriction_H,N)}$ we proved that $c_n \geq c_n'$. On the other hand, one can easily see that $c_n/c_n'\leq |G/H|$ is bounded. Therefore, $H^*(\phi\restriction_{H},N)=\lim_{n\to \infty}\frac{\log c_n'}{n}=\lim_{n\to\infty} \frac{\log c_n}{n}=H^\star(\phi,N)$. we can conclude that $\ent^\star(\phi)=\ent^\star(\phi\restriction_H)$.
\end{proof}

The next three results deal with the adjoint algebraic entropy of group endomorphism of direct sums.

\begin{lemma}\label{poorAT}
Let $G$ be an Abelian group.
If $G = G_1 \oplus G_2$ for some $G_1,G_2\leq G$, and $\f= \f_1 \oplus \f_2: G \to G$ for some $\f_1\in \End(G_1), \f_2\in \End(G_2)$, then $\ent^\star(\f) = \ent^\star(\f_1) + \ent^\star(\f_2)$.   
\end{lemma}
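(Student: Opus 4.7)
The plan is to prove the two inequalities $\ent^\star(\phi) \geq \ent^\star(\phi_1) + \ent^\star(\phi_2)$ and $\ent^\star(\phi) \leq \ent^\star(\phi_1) + \ent^\star(\phi_2)$ separately, exploiting the fact that $\phi$ preserves the direct sum decomposition, so inverse images and iterates respect the splitting.

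For the first (easier) inequality, I would fix arbitrary $N_i \in \CC(G_i)$ for $i=1,2$ and set $N = N_1 \oplus N_2 \in \CC(G)$. A direct verification shows that for each $n \geq 0$,
\[
\phi^{-n} N \;=\; \phi_1^{-n} N_1 \,\oplus\, \phi_2^{-n} N_2,
\]
because $\phi^n(x_1,x_2) = (\phi_1^n x_1, \phi_2^n x_2)$ lies in $N_1 \oplus N_2$ precisely when each coordinate lies in the appropriate factor. Intersecting over $n$ gives $B_n(\phi, N) = B_n(\phi_1, N_1) \oplus B_n(\phi_2, N_2)$, hence $C_n(\phi,N) \cong C_n(\phi_1,N_1) \times C_n(\phi_2,N_2)$. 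Taking logarithms, dividing by $n$, and passing to the limit, we get
\[
H^\star(\phi, N) \;=\; H^\star(\phi_1, N_1) + H^\star(\phi_2, N_2).
\]
Taking the supremum over $N_1, N_2$ independently yields $\ent^\star(\phi) \geq \ent^\star(\phi_1) + \ent^\star(\phi_2)$.

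For the reverse inequality, the standard trick is to approximate an arbitrary cofinite subgroup from below by a splitting one. Given $N \in \CC(G)$, set $N_i := N \cap G_i$; the natural injection $G_i / N_i \hookrightarrow G/N$ shows $N_i \in \CC(G_i)$. Let $N' := N_1 \oplus N_2$. Then $N' \leq N$ (using $G_1 \cap G_2 = 0$), so Lemma \ref{N<M->HN>HM} gives $H^\star(\phi, N) \leq H^\star(\phi, N')$. Applying the identity from the first part to $N'$, we conclude
\[
H^\star(\phi, N) \;\leq\; H^\star(\phi_1, N_1) + H^\star(\phi_2, N_2) \;\leq\; \ent^\star(\phi_1) + \ent^\star(\phi_2),
\]
and taking the supremum over $N \in \CC(G)$ finishes the proof.

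There is no real obstacle here: the lemma is essentially functorial once the identity $\phi^{-n}(N_1 \oplus N_2) = \phi_1^{-n}N_1 \oplus \phi_2^{-n}N_2$ is in hand. The only mildly subtle point is the reduction to a splitting cofinite subgroup, but this is immediate from the antimonotonicity established in Lemma \ref{N<M->HN>HM}, which is why the argument works cleanly despite the failure of $\ent^\star(-)$ to behave well on arbitrary invariant subgroups (as noted in Example \ref{noAT*}).
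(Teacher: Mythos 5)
Your proof is correct and is precisely the argument the paper intends: it sketches the same two ingredients, namely that the splitting subgroups $N_1\oplus N_2$ with $N_i\in\CC(G_i)$ are cofinal in $\CC(G)$ (which you justify via $N_i=N\cap G_i$ and the anti-monotonicity of $H^\star$ from Lemma \ref{N<M->HN>HM}) and that $C_n(\f,N_1\oplus N_2)\cong C_n(\f_1,N_1)\oplus C_n(\f_2,N_2)$ (which you derive from $\f^{-n}(N_1\oplus N_2)=\f_1^{-n}N_1\oplus\f_2^{-n}N_2$). You have simply filled in the details that the paper's one-line proof leaves to the reader.
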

\begin{proof}
Use the fact that the subgroups $N=N_1\oplus N_2$ of $G$, where $N_i\in {\mathcal C}(G_i)$ for $i=1,2$, form a cofinal set in $C(G)$ and $C_n(\f, N)\cong C_n(\f_1,N_1)\oplus C_n(\f_2,N_2)$. 
\end{proof}

\begin{corollary}\label{DirSum1}
If $G = G_1 \oplus G_2$, then $\ent^\star(G)=0$ yields $\ent^\star(G_1)=\ent^\star(G_2)=0$. Moreover, if $\mathrm{Hom}(G_1,G_2)=0$ and $\mathrm{Hom}(G_2,G_1)=0$, then the inverse implication holds too.
\end{corollary}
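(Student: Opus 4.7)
The proof plan is a direct application of Lemma \ref{poorAT} in both directions, combined with basic facts about endomorphism rings of direct sums.

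For the forward implication, suppose $\ent^\star(G) = 0$ and pick an arbitrary $\phi_1 \in \End(G_1)$. The plan is to extend $\phi_1$ to an endomorphism of $G$ by setting $\phi = \phi_1 \oplus 0_{G_2}$, where $0_{G_2}$ is the zero endomorphism of $G_2$. Since $\ent^\star(0_{G_2}) = 0$ by Example \ref{multiplication}(a), Lemma \ref{poorAT} gives
\[
\ent^\star(\phi) = \ent^\star(\phi_1) + \ent^\star(0_{G_2}) = \ent^\star(\phi_1).
\]
But $\ent^\star(\phi) \le \ent^\star(G) = 0$, hence $\ent^\star(\phi_1) = 0$. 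Since $\phi_1$ was arbitrary, $\ent^\star(G_1) = 0$, and by symmetry $\ent^\star(G_2) = 0$.

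For the converse, assume $\mathrm{Hom}(G_1,G_2) = \mathrm{Hom}(G_2,G_1) = 0$ and $\ent^\star(G_1) = \ent^\star(G_2) = 0$. The key observation is that under these vanishing Hom conditions, every $\phi \in \End(G)$ splits as a direct sum. Concretely, writing $\phi$ in matrix form with respect to the decomposition $G = G_1 \oplus G_2$, the off-diagonal components $\pi_i \circ \phi\restriction_{G_j}$ (for $i \neq j$) belong to $\mathrm{Hom}(G_j,G_i) = 0$ and hence vanish, so $\phi = \phi_1 \oplus \phi_2$ with $\phi_i \in \End(G_i)$. Applying Lemma \ref{poorAT} again,
\[
\ent^\star(\phi) = \ent^\star(\phi_1) + \ent^\star(\phi_2) \le \ent^\star(G_1) + \ent^\star(G_2) = 0.
\]
Since $\phi$ was arbitrary, $\ent^\star(G) = 0$.

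There is no real obstacle here; the only point requiring care is remembering that in the forward direction one needs $\ent^\star(0_{G_2}) = 0$ to make the extension argument work, and in the converse direction one must invoke the Hom-vanishing hypotheses precisely to pass from an arbitrary endomorphism of $G$ to a diagonal one so that Lemma \ref{poorAT} is applicable.
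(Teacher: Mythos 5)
Your proof is correct. The converse direction is exactly the paper's argument: the Hom-vanishing hypotheses force every $\phi\in\End(G)$ to be diagonal with respect to the decomposition, and Lemma \ref{poorAT} then gives $\ent^\star(\phi)=\ent^\star(\phi_1)+\ent^\star(\phi_2)=0$. For the forward direction the paper instead cites Lemma \ref{subgroup} (invariance of $\ent^\star$ under restriction to a $\phi$-invariant subgroup of \emph{finite index}), whereas you extend $\phi_1$ by $0_{G_2}$ and apply Lemma \ref{poorAT} together with $\ent^\star(0_{G_2})=0$ from Example \ref{multiplication}(a). Your route is arguably the cleaner one here: $G_1$ belongs to $\CC(G)$ only when $G_2$ is finite, so Lemma \ref{subgroup} as literally stated does not apply to an arbitrary summand, and the extension-by-zero argument via Lemma \ref{poorAT} is precisely what fills that gap. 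Both versions use only material already established at that point in the paper, so nothing is lost by taking your path.
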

\begin{proof}
The first claim follows from Lemma \ref{subgroup}, the latter from Lemma \ref{poorAT}.
\end{proof}

One can easily extend this corollary to direct sums of arbitrary families of Abelian groups. We give only a particular case that will be needed in the sequel. 

\begin{corollary}\label{DirSum2}
If $G =\bigoplus_p G_p$ is the primary decomposition of a torsion Abelian group $G$ and $\f\in \End(G)$, then 
$\ent^\star(\f) = \sum_p \ent^\star(\f \restriction _{G_p})$. In particular, $\ent^\star(G)=0$ if and only if $\ent^\star(\f \restriction _{G_p} )=0$ for all primes $p$. \hfill$\qed$
\end{corollary}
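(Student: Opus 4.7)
The plan is to extend the additivity of Lemma \ref{poorAT} from a two-summand direct decomposition to the (possibly infinite) primary decomposition, exploiting two features special to the torsion setting: each primary component $G_p$ is fully invariant under every $\phi \in \End(G)$, and any $N \in \CC(G)$ contains $G_p$ for all but finitely many $p$. First I would observe that since $G$ is torsion, $N = \bigoplus_p N_p$ with $N_p := N \cap G_p$; each $N_p$ lies in $\CC(G_p)$ (because $G_p/N_p$ embeds in $G/N$), and the finiteness of $G/N = \bigoplus_p G_p/N_p$ forces $N_p = G_p$ off a finite set $S(N)$ of primes. Writing $\phi_p := \phi\restriction_{G_p}$, the $\phi$-invariance of $G_p$ gives $\phi^{-i}N = \bigoplus_p \phi_p^{-i}N_p$ for every $i \in \N$.

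Intersecting this identity over $0 \le i \le n-1$ yields the crucial formula
$$B_n(\phi, N) = \bigoplus_p B_n(\phi_p, N_p), \qquad C_n(\phi, N) \cong \bigoplus_{p \in S(N)} C_n(\phi_p, N_p),$$
so $\log |C_n(\phi, N)| = \sum_{p \in S(N)} \log |C_n(\phi_p, N_p)|$. Dividing by $n$ and letting $n \to \infty$ gives $H^\star(\phi, N) = \sum_{p \in S(N)} H^\star(\phi_p, N_p)$, and taking the supremum over $N \in \CC(G)$ produces the upper bound $\ent^\star(\phi) \le \sum_p \ent^\star(\phi_p)$.

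For the reverse inequality I would fix a finite set $S$ of primes and, for each $p \in S$, a subgroup $N_p \in \CC(G_p)$; setting $N_p := G_p$ for $p \notin S$ and $N := \bigoplus_p N_p$, the identity above becomes $H^\star(\phi, N) = \sum_{p \in S} H^\star(\phi_p, N_p)$. Taking the supremum in each coordinate independently (and allowing $H^\star(\phi_p, N_p)$ to grow unboundedly when $\ent^\star(\phi_p) = \infty$) gives $\ent^\star(\phi) \ge \sum_{p \in S} \ent^\star(\phi_p)$, and letting $S$ exhaust all primes yields the matching lower bound. The ``in particular'' clause then follows at once from non-negativity of the summands.

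The main — and only mildly delicate — step is the identity $\phi^{-i}N = \bigoplus_p \phi_p^{-i}N_p$, which genuinely requires torsion: in its absence, the elements of $N$ need not decompose across primary components and the pull-backs fail to split. Once this is granted, both inequalities reduce to careful bookkeeping over the finite index set $S(N)$, and the infinite-valued case is absorbed by working in $[0, \infty]$ throughout.
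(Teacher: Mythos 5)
Your proof is correct and follows essentially the same route the paper intends: the corollary is left as an easy extension of Lemma \ref{poorAT}, whose proof rests on exactly the decomposition $C_n(\phi,N)\cong\bigoplus_p C_n(\phi_p,N_p)$ over the (cofinal, here in fact exhaustive) family of subgroups $N=\bigoplus_p N_p$ with $N_p\in\CC(G_p)$ and $N_p=G_p$ for almost all $p$. Your write-up merely supplies the details the paper omits (full invariance of the $G_p$, primary splitting of $N$, and the finite-support bookkeeping), all of which check out.
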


For an Abelian group $G$ and $\phi\in\End(G)$ define 
\begin{equation}\label{phi^1}
\phi^1:G/G^1\to G/G^1
\end{equation}
the endomorphism induced by $\phi$ on the quotient of $G$ on its first Ulm subgroup $G^1$.
Since $(G/G^1)^1=0$ for an Abelian group $G$, the following proposition shows that, in the computation of the adjoint algebraic entropy, we can always assume $G^1=0$.

\begin{proposition}\label{ent*=ent*1}
If $G$ is an Abelian group and $\phi\in\End(G)$, then $\ent^\star(\phi)=\ent^\star(\phi^1).$
\end{proposition}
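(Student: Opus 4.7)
The strategy is straightforward: since $G^1$ is the intersection of all finite-index subgroups, every $N \in \CC(G)$ automatically contains $G^1$, so passing to $G/G^1$ does not lose any finite-index subgroup and the cotrajectories are literally preserved.

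First I would note that $G^1$ is fully invariant (hence $\phi$-invariant), so $\phi^1 : G/G^1 \to G/G^1$ is well-defined. The inequality $\ent^\star(\phi^1) \leq \ent^\star(\phi)$ is then immediate from Lemma~\ref{quotient} applied to the $\phi$-invariant subgroup $G^1$.

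For the reverse inequality I would exploit the fact, recalled right after the definition of $G^1$, that $G^1 = \bigcap\{N \leq G : G/N \text{ finite}\}$. In particular, every $N \in \CC(G)$ contains $G^1$, so the correspondence $N \mapsto N/G^1$ is a bijection between $\CC(G)$ and $\CC(G/G^1)$. Fixing such an $N$ and $n \in \N_+$, a direct check shows $(\phi^1)^{-i}(N/G^1) = \phi^{-i}(N)/G^1$ for each $i \geq 0$ (since $\phi^{-i}(N)$ contains $G^1$ as $\phi G^1 \subseteq G^1 \subseteq N$), and intersecting these yields
$$B_n(\phi^1, N/G^1) = B_n(\phi,N)/G^1.$$
The third isomorphism theorem then gives
$$C_n(\phi^1, N/G^1) \cong \frac{G/G^1}{B_n(\phi,N)/G^1} \cong \frac{G}{B_n(\phi,N)} = C_n(\phi,N),$$
so $|C_n(\phi^1, N/G^1)| = |C_n(\phi,N)|$ for every $n$, whence $H^\star(\phi^1, N/G^1) = H^\star(\phi, N)$.

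Taking the supremum over $N \in \CC(G)$ on the right, which by the bijection above is the same as the supremum over all $M \in \CC(G/G^1)$ on the left, yields $\ent^\star(\phi) \leq \ent^\star(\phi^1)$ and completes the proof. There is no real obstacle here; the only point that needs care is the verification that preimage under $\phi^1$ commutes with the quotient map on finite-index subgroups, which relies precisely on the fact that $G^1$ is contained in every $N \in \CC(G)$.
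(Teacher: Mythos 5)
Your proof is correct and follows essentially the same route as the paper: one inequality from Lemma~\ref{quotient}, and the other by observing that every $N\in\CC(G)$ contains $G^1$, so that $B_n(\phi^1,N/G^1)=B_n(\phi,N)/G^1$ and hence $C_n(\phi^1,N/G^1)\cong C_n(\phi,N)$. The only cosmetic difference is that the paper justifies $G^1\leq\phi^{-i}N$ via the finiteness of $G/\phi^{-i}N$, whereas you use full invariance of $G^1$; both are valid.
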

\begin{proof}
By Lemma \ref{quotient} $\ent^\star(\phi)\geq\ent^\star(\phi^1)$. So let $N\in\CC(G)$. Since $N\supseteq n G$ for some $n\in\N_+$, $N\supseteq G^1$ as well, and $N/G^1\in\CC(G/G^1)$. We show that 
\begin{equation}\label{H*=H*}
H^\star(\phi,N)=H^\star(\phi^1,N/G_1)\leq\ent^\star(\phi^1).
\end{equation}
Since $N$ has finite index in $G$, we have seen that also $\phi^{-n}N$ has finite index in $G$ for every $n\in\N_+$, and so $\phi^{-n}N\supseteq G^1$ for every $n\in\N_+$. Consequently $B_n(\phi,N)\supseteq G^1$ for every $n\in\N_+$. Moreover, for every $n\in\N_+$,
$$\frac{B_n(\phi,N)}{G^1}=B_n\left(\phi^1,\frac{N}{G^1}\right);$$
in fact, 
\begin{align*}
\frac{B_n(\phi,N)}{G^1}&=\frac{N}{G^1}\cap \frac{\phi^{-1}N}{G^1}\cap \ldots\cap \frac{\phi^{-n+1}N}{G^1}=\\
&=\frac{N}{G^1}\cap(\phi^1)^{-1}\frac{N}{G^1}\cap\ldots\cap(\phi^1)^{-n+1}\frac{N}{G^1}=B_n\left(\phi^1,\frac{N}{G^1}\right).
\end{align*}
Therefore, for every $n\in\N_+$, $$C_n(\phi,N)=\frac{G}{B_n(\phi,N)}\cong\frac{\frac{G}{G^1}}{\frac{B_n(\phi,N)}{G^1}}=\frac{\frac{G}{G^1}}{B_n\left(\phi^1,\frac{N}{G^1}\right)}=C_n\left(\phi^1,\frac{N}{G^1}\right).$$
This implies that \eqref{H*=H*} holds true.
\end{proof}

Note that the property proved in Proposition \ref{ent*=ent*1} is dual to the property satisfied by the algebraic entropy: $\ent(\phi)=\ent(\phi\restriction_{t(G)})$, where $t(G)=\sum\{F\leq G:F\ \text{finite}\}$ is the torsion subgroup of $G$ (see \cite[fact (d) in Section 1]{DGSZ}).

\medskip
In analogy with the continuity of the algebraic entropy for direct limits of invariant subgroups (see property (D) above), one would expect to find continuity of the adjoint algebraic entropy for {\em inverse}   limits. Example \ref{no-lim} will show that this continuity fails, even in the case of endomorphisms of bounded Abelian groups.

\medskip
We consider now the behaviour of two commuting endomorphisms. This is inspired by the analogous result for the algebraic entropy in \cite[Lemma 2.5]{DGSZ}.

\begin{example}
Let $G$ be an Abelian group and let $\phi,\psi\in\End(G)$ be such that $\phi\psi=\psi\phi$. 

\smallskip
(a) an easy computation shows that, for every $n\in\N_+$ and every $N\in\CC(G)$, we have $$B_n(\phi,B_n(\psi,N))\leq B_n(\phi\psi,N)\cap B_n(\phi+\psi,N).$$

\smallskip
(b) If $\ent^\star(\psi)=0$, then $$\ent^\star(\phi\psi)\leq\ent^\star(\phi)\ \text{and}\ \ent^\star(\phi+\psi)\leq\ent^\star(\phi).$$ In fact, given $N\in\CC(G)$, $H^\star(\psi,N)=0$ by hypothesis and so $B(\psi,N)=B_m(\psi,N)$ for some $m\in\N_+$ and in particular $B(\psi,N)\in\CC(G)$. 
By (a) $B_n(\phi\psi,N)\geq B_n(\phi,B(\psi,N))$ for every $n\in\N_+$.
Consequently, $\card{C_n(\phi\psi,N)}\leq\card{C_n(\phi,B(\psi,N))}$ for every $n\in\N_+$, and this yields $H^\star(\phi\psi,N)\leq\ent^\star(\phi)$; hence $\ent^\star(\phi\psi)\leq\ent^\star(\phi)$. By (a) $\card{C_n(\phi+\psi,N)}\leq\card{C_n(\phi,B(\psi,N))}$ for every $n\in\N_+$, and this yields $H^\star(\phi+\psi,N)\leq\ent^\star(\phi)$; hence $\ent^\star(\phi+\psi)\leq\ent^\star(\phi)$.

\smallskip
(c) By (b), if $\ent^\star(\phi)=\ent^\star(\psi)=0$, then $\ent^\star(\phi\psi)=\ent^\star(\phi+\psi)=0$ as well.

\smallskip
(d) By Lemma \ref{ll} $\ent^\star(\phi)=0$ if and only if $\ent^\star(\phi^k)=0$ for every $k\in\N$, and analogously for $\psi$. Therefore, it follows from (c) that, if $\ent^\star(\phi)=\ent^\star(\psi)=0$, then for any polynomial $f\in \Z[X,Y]$, $\ent^\star(f(\phi,\psi))=0$.
\end{example}

\section{Adjoint algebraic entropy and algebraic entropy of the Pontryagin adjoint}\label{sec-3}

The main goal of this section is to prove in Theorem \ref{ent*=ent^} that the adjoint algebraic entropy $\ent^\star(\phi)$ of an endomorphism $\phi:G\to G$, where $G$ is an Abelian group, coincides with the algebraic entropy $\ent(\dual\phi)$ of the adjoint endomorphism $\dual\phi:\dual G\to \dual G$, where $\dual G$ is the Pontryagin dual group of $G$.

Recall that for a topological Abelian group $G$ the Pontryagin dual $\dual G$ of $G$ is the group $\mathrm{Chom}(G,\mathbb T)$ of the continuous characters of $G$ (i.e., the continuous homomorphisms $G\to \mathbb T$, where $\mathbb T=\mathbb R/\Z$) endowed with the compact-open topology \cite{P}. The Pontryagin dual of a discrete Abelian group is always compact, in which case $G^*=\mathrm{Hom}(G,\mathbb T)$.
For a subset $H$ of $G$, the \emph{annihilator} of $H$ in $\dual G$ is $H^\perp=\{\chi\in\dual G:\chi H=0\}$. If $\phi:G\to G$ is an endomorphism, its Pontryagin adjoint $\dual\phi:\dual G\to \dual G$ is defined by $\dual\phi(\chi)=\chi\circ\phi$ for every $\chi\in\dual G$.

We collect here some known facts concerning the Pontryagin duality that we will use in what follows, and which are proved in \cite{DPS}, \cite{HR} and \cite{O3}.

\begin{itemize}
\item[(i)] If $F$ is a finite Abelian group, then $\dual F\cong F$.
\item[(ii)] For a family $\{H_i:i\in I\}$ of Abelian groups, $\dual{(\bigoplus_{i\in I}H_i)}\cong\prod_{i\in I}\dual H_i$.
\item[(iii)] If $G$ is an Abelian group and $H$ a subgroup of $G$, then $H^\perp\cong\dual{(G/H)}$ and $G^*/H^\perp\cong \dual H$.
\item[(iv)] If $G$ is an Abelian group and $p$ a prime, then $(p G)^\perp=\dual G[p]$. Consequently, $\dual G[p]\cong\dual{(G/pG)}$ by (iii).
\item[(v)] If $H_1,\ldots,H_n$ are subgroups of an Abelian group $G$, then $(\sum_{i=1}^nH_i)^\perp\cong \bigcap_{i=1}^nH_i^\perp$ and $(\bigcap_{i=1}^nH_i)^\perp\cong\sum_{i=1}^n H_i^\perp$.
\end{itemize}

 We need the following lemma and proposition. The lemma is an easy to prove consequence of the known results on Pontryagin duality, applied to the adjoint endomorphism.

\begin{lemma}\label{newfacts}
Let $G$ be an Abelian group and $\phi\in\End(G)$.
\begin{itemize}
\item[(a)] If $p$ is a prime, and $\xi:\dual G[p]\to \dual{(G/pG)}$ is the isomorphism given in \emph{(iv)}, then $\xi\circ\dual\phi\restriction_{\dual G[p]}=\dual{\overline\phi}_p\circ\xi$, where $\overline\phi_p:G/pG\to G/pG$ is the endomorphism induced by $\phi$.
\item[(b)] A subgroup $H$ of $G$ is $\phi$-invariant if and only if $H^\perp$ is $\dual\phi$-invariant in $\dual G$. \hfill$\qed$
\end{itemize}
\end{lemma}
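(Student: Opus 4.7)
The plan is to reduce both parts to direct diagram chases using the explicit description of the annihilator isomorphism in facts (iii)--(iv) and the natural functoriality of the Pontryagin dual. The key observation for (a) is that the isomorphism $\xi$ of (iv) is concrete: each $\chi\in\dual G[p]$ satisfies $\chi(pg)=p\chi(g)=0$, hence vanishes on $pG$ and factors uniquely as $\chi=\bar\chi\circ\pi$, where $\pi:G\to G/pG$ is the canonical projection. The map $\xi$ is then the assignment $\chi\mapsto\bar\chi$. Note also that $\dual G[p]$ is $\dual\phi$-invariant, since it is the kernel of multiplication by $p$ on $\dual G$, which commutes with every endomorphism.

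For part (a), I would evaluate both sides of the claimed equality on an arbitrary $\chi\in\dual G[p]$ by testing at a coset $g+pG$. On the one hand, $\xi(\dual\phi(\chi))=\overline{\chi\circ\phi}$ sends $g+pG$ to $\chi(\phi(g))$. On the other hand, $\dual{\overline\phi_p}(\xi(\chi))=\bar\chi\circ\overline\phi_p$ sends $g+pG$ to $\bar\chi(\phi(g)+pG)=\chi(\phi(g))$. So the identity reduces to the defining relation $\overline\phi_p\circ\pi=\pi\circ\phi$ for the induced endomorphism, composed on the left with $\bar\chi$; equivalently, to the universal property of $\bar\chi$.

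For part (b), the forward implication is a one-line check: if $\phi H\subseteq H$ and $\chi\in H^\perp$, then for every $h\in H$, $\dual\phi(\chi)(h)=\chi(\phi(h))=0$, so $\dual\phi(\chi)\in H^\perp$. For the converse I would invoke the double-annihilator identity $H^{\perp\perp}=H$, which holds for any subgroup of a discrete Abelian group by Pontryagin duality (a standard fact from \cite{DPS,HR}). Assuming $\dual\phi(H^\perp)\subseteq H^\perp$, for every $h\in H$ and every $\chi\in H^\perp$ one has $\chi(\phi(h))=\dual\phi(\chi)(h)=0$, showing $\phi(h)\in H^{\perp\perp}=H$.

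The whole argument is essentially routine and there is no genuine obstacle; the main pitfall is bookkeeping. One must be careful to make the isomorphism $\xi$ explicit (as $\chi\mapsto\bar\chi$) rather than treat it abstractly, otherwise the two natural directions of the annihilator/factorization bijection in (iii)--(iv) can easily be confused. Once this is fixed, both parts follow immediately from functoriality of the Pontryagin dual.
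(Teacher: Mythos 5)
Your proof is correct and is exactly the routine argument the paper has in mind: the paper omits the proof entirely, declaring the lemma ``an easy to prove consequence of the known results on Pontryagin duality,'' and your explicit description of $\xi$ as $\chi\mapsto\bar\chi$ together with the evaluation at cosets gives (a), while the one-line forward check and the double-annihilator identity $H^{\perp\perp}=H$ (valid for any subgroup of a discrete group) give (b). No gaps.
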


The next proposition is fundamental for the proof of Theorem \ref{ent*=ent^}.

\begin{proposition}\label{perp}
Let $G$ be an Abelian group, $H$ a subgroup of $G$ and $\phi\in\End(G)$. For every $n\in\N$, $(\phi^{-n}H)^\perp=(\dual\phi)^n H^\perp$.
\end{proposition}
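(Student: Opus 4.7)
The plan is to reduce the general statement to the case $n=1$, and then handle $n=1$ by proving the two inclusions separately. The case $n=0$ is trivial (both sides equal $H^\perp$). For the inductive step, once we know $(\phi^{-1}K)^\perp = \dual\phi(K^\perp)$ holds for every subgroup $K$ of $G$, then applying this to $K = \phi^{-n}H$ gives
\[
(\phi^{-(n+1)}H)^\perp = (\phi^{-1}\phi^{-n}H)^\perp = \dual\phi\bigl((\phi^{-n}H)^\perp\bigr) = \dual\phi\bigl((\dual\phi)^n H^\perp\bigr) = (\dual\phi)^{n+1}H^\perp,
\]
closing the induction.

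For the inclusion $\dual\phi(H^\perp) \subseteq (\phi^{-1}H)^\perp$, the argument is direct: if $\chi \in H^\perp$ and $x \in \phi^{-1}H$, then $\phi(x) \in H$, so $(\dual\phi\,\chi)(x) = \chi(\phi(x)) = 0$. Hence $\dual\phi\,\chi \in (\phi^{-1}H)^\perp$.

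The harder direction is the reverse inclusion $(\phi^{-1}H)^\perp \subseteq \dual\phi(H^\perp)$, and this is where the essential work happens. Given $\psi \in (\phi^{-1}H)^\perp$, I want to produce $\chi \in H^\perp$ with $\chi \circ \phi = \psi$. The idea is that $\phi$ induces an injective homomorphism
\[
\bar\phi : G/\phi^{-1}H \hookrightarrow G/H, \qquad x + \phi^{-1}H \mapsto \phi(x) + H,
\]
and since $\psi$ vanishes on $\phi^{-1}H$ it descends to a homomorphism $\tilde\psi : G/\phi^{-1}H \to \T$. The task is then to extend $\tilde\psi$ along the injection $\bar\phi$ to a character $\tilde\chi : G/H \to \T$; lifting $\tilde\chi$ to $\chi : G \to \T$ by composing with the quotient $G \to G/H$ produces an element of $H^\perp$ satisfying $\chi \circ \phi = \psi$, i.e.\ $\dual\phi\,\chi = \psi$.

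The main (and only real) obstacle is justifying the extension of $\tilde\psi$ through $\bar\phi$, and here one invokes the divisibility of $\T$: the circle group is injective in the category of abelian groups, so every homomorphism from a subgroup of $G/H$ into $\T$ extends to all of $G/H$. This is the standard tool underpinning Pontryagin duality arguments, and once it is applied, the entire equality $(\phi^{-n}H)^\perp = (\dual\phi)^n H^\perp$ falls out of the induction above.
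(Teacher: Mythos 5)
Your proof is correct and follows essentially the same route as the paper: the easy inclusion by direct evaluation, and the hard inclusion $(\phi^{-1}H)^\perp\subseteq\dual\phi H^\perp$ by factoring through the injective induced map $G/\phi^{-1}H\to G/H$ and extending the character using the divisibility (injectivity) of $\T$. The only cosmetic difference is that you reduce $n>1$ to $n=1$ by induction on the subgroup $\phi^{-n}H$, whereas the paper notes $(\dual\phi)^n=\dual{(\phi^n)}$ and applies the $n=1$ case to $\phi^n$; both reductions are valid.
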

\begin{proof}
We prove the result for $n=1$, that is, $(\phi^{-1}H)^\perp=\dual\phi H^\perp$. The proof for $n>1$ follows easily from this case noting that $(\dual\phi)^n=\dual{(\phi^n)}$. 

Let $\pi':G\to G/\phi^{-1}H$ and $\pi:G\to G/H$ be the canonical projections. Let $\widetilde\phi:G/\phi^{-1}H\to G/H$ be the homomorphism induced by $\phi$, and note that $\widetilde\phi$ is injective.
The proof is based on the following commuting diagram:
\begin{equation*}
\xymatrix{
G \ar@/^1.5pc/[rr]^\chi \ar[d]^\phi \ar[r]^{\pi'} & \frac{G}{\phi^{-1}H} \ar[d]^{\widetilde\phi}\ar[r]^\eta & \mathbb T \\
G \ar[r]^\pi \ar@/_3pc/[rru]_\theta & \frac{G}{H}\ar[ur]_{\xi} & 
}
\end{equation*}

Let $\chi\in(\phi^{-1}H)^\perp$. Then $\chi=\eta\circ\pi'$, where $\eta:G/\phi^{-1}H\to\mathbb T$. Since $\widetilde\phi$ is injective, and $\mathbb T$ is divisible, $\eta$ can be extended to $\xi:G/H\to \mathbb T$, i.e., $\eta=\xi\circ \widetilde\phi$. This gives $$\chi=\eta\circ \pi'=\xi\circ\widetilde\phi\circ\pi'=\xi\circ\pi\circ\phi,$$ and this shows that $\chi=\dual\phi(\theta)$, where $\theta=\xi\circ\pi\in H^\perp$. This proves the inclusion $(\phi^{-1}H)^\perp\subseteq\dual\phi H^\perp$.

Now let $\chi\in\dual\phi H^\perp$. Then $\chi=\dual\phi(\theta)=\theta\circ\phi$, where $\theta\in H^\perp$. So $\theta=\xi\circ\pi$, for some $\xi:G/H\to\mathbb T$. Take $\eta=\xi\circ\widetilde\phi$ (since $\widetilde\phi$ is injective we can think that $\eta=\xi\restriction_{G/\phi^{-1}N}$). Therefore, $$\chi=\theta\circ\phi)\xi\circ\pi\circ\phi=\xi\circ\widetilde\phi\circ\pi'=\eta\circ\pi'\in(\phi^{-1}H)^\perp.$$ This prove the inclusion $(\phi^{-1}H)^\perp\supseteq\dual\phi H^\perp$.
\end{proof}

We can now prove the main result of this section.

\begin{theorem}\label{ent*=ent^}
Let $G$ be an Abelian group and $\phi\in\End(G)$. Then $\ent^\star(\phi)=\ent(\dual\phi)$.
\end{theorem}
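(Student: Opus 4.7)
My plan is to establish a bijection, via annihilators, between $\CC(G)$ and the family of finite subgroups of $\dual G$, and then transport the quantities defining $H^\star(\phi,N)$ into the quantities defining the algebraic entropy of $\dual\phi$ at the corresponding finite subgroup.

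First I would set up the correspondence. For $N \in \CC(G)$, fact (iii) combined with fact (i) gives $N^\perp \cong \dual{(G/N)} \cong G/N$, so $N^\perp$ is a finite subgroup of $\dual G$ of order $|G/N|$. Conversely, any finite subgroup $F$ of $\dual G$ is closed, hence by Pontryagin reflexivity $F = (F^\perp)^\perp$, and fact (iii) gives $G/F^\perp \cong \dual F \cong F$, so $F^\perp \in \CC(G)$. Thus $N \mapsto N^\perp$ is a bijection between $\CC(G)$ and the finite subgroups of $\dual G$, sending $F^\perp$ back to $F$.

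Next I would dualize the defining subgroup $B_n(\phi,N)$. Applying fact (v) to the finite intersection $B_n(\phi,N) = \bigcap_{k=0}^{n-1}\phi^{-k}N$, and then Proposition \ref{perp} to each factor, gives
\begin{equation*}
B_n(\phi,N)^\perp = \sum_{k=0}^{n-1}(\phi^{-k}N)^\perp = \sum_{k=0}^{n-1}(\dual\phi)^k N^\perp = T_n(\dual\phi, N^\perp).
\end{equation*}
Since $B_n(\phi,N) \in \CC(G)$, the quotient $C_n(\phi,N) = G/B_n(\phi,N)$ is finite, so by facts (iii) and (i),
\begin{equation*}
|C_n(\phi,N)| = |\dual{C_n(\phi,N)}| = |B_n(\phi,N)^\perp| = |T_n(\dual\phi, N^\perp)|.
\end{equation*}
Dividing by $n$, taking the limit, and using the definition of the algebraic entropy $H(\dual\phi, N^\perp)$ of $\dual\phi$ relative to the finite subgroup $N^\perp$, I get $H^\star(\phi, N) = H(\dual\phi, N^\perp)$.

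Finally, taking suprema over $N \in \CC(G)$ on the left and, by the bijection established in step one, over all finite subgroups $F = N^\perp$ of $\dual G$ on the right yields $\ent^\star(\phi) = \ent(\dual\phi)$. The only genuine subtlety is ensuring the bijection is onto the set of \emph{all} finite subgroups of $\dual G$, which is where Pontryagin reflexivity $G \cong \dual{(\dual G)}$ (applied to the discrete group $G$ with compact dual $\dual G$) enters in an essential way; everything else is a direct translation through the duality formulas recalled before Lemma \ref{newfacts} and through Proposition \ref{perp}.
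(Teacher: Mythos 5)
Your proof is correct and follows essentially the same route as the paper's: annihilators turn cotrajectories into trajectories via Proposition \ref{perp} and fact (v), and finite duality gives $|C_n(\phi,N)|=|T_n(\dual\phi,N^\perp)|$. If anything, you are slightly more careful than the paper, which leaves implicit the surjectivity of $N\mapsto N^\perp$ onto all finite subgroups of $\dual G$ (needed for the inequality $\ent^\star(\phi)\geq\ent(\dual\phi)$); your appeal to reflexivity fills that in correctly.
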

\begin{proof}
Let $N\in\CC(G)$. Then $F=N^\perp$ is a finite subgroup of $\dual G$ by facts (iii) and (i). By Proposition \ref{perp}, $(\phi^{-n}N)^\perp=(\dual\phi)^n F$ for every $n\in\N$. Hence, $B_n(\phi,N)^\perp=T_n(\dual \phi,F)$ for every $n\in\N_+$ by fact (v). It follows that $|C_n(\phi,N)|=|\dual{C_n(\phi,N)}|=|B_n(\phi,N)^\perp|=|T_n(\dual\phi,F)|$ for every $n\in\N_+$, and this concludes the proof.
\end{proof}

Let us denote by $\overline H$ the closure of a subgroup $H$ of a topological group $K$. So, if $G$ is an Abelian group and $H\leq \dual G$, $\overline H$ denotes the closure of $H$ in the compact-open topology.

Since $t(\dual G)$ and $\overline{t(\dual G)}$ are $\dual\phi$-invariant subgroups of $\dual G$, we have the following result.

\begin{corollary}\label{entphi*}
Let $G$ be an Abelian group and $\phi\in\End(G)$. Then:
\begin{itemize}
\item[(a)]$\ent(\dual\phi)=\ent(\dual\phi\restriction_{t(\dual G)})=\ent(\dual\phi\restriction_{\overline{t(\dual G)}})$;
\item[(b)]$\ent^\star(\phi)=0$ if and only if $\ent(\dual\phi\restriction_{\dual G[p]})=0$ for every prime $p$.
\end{itemize}
\end{corollary}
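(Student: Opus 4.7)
The plan is to deduce both items as essentially formal consequences of Theorem~\ref{ent*=ent^} together with the standard fact, recalled just before Lemma~\ref{1.18}, that algebraic entropy ``lives on the torsion part'': for any Abelian group $K$ and any $\psi\in\End(K)$, one has $\ent(\psi)=\ent(\psi\restriction_{t(K)})$.

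For (a), I would first note that $t(\dual G)$ is $\dual\phi$-invariant (any group homomorphism preserves torsion) and that $\overline{t(\dual G)}$ is $\dual\phi$-invariant as well, since $\dual\phi$ is continuous in the compact-open topology and carries $t(\dual G)$ into itself. The equality $\ent(\dual\phi)=\ent(\dual\phi\restriction_{t(\dual G)})$ is then a direct application of the quoted fact to $K=\dual G$. For the second equality I would apply the same fact to $\psi:=\dual\phi\restriction_{\overline{t(\dual G)}}\in\End(\overline{t(\dual G)})$, which yields $\ent(\psi)=\ent(\psi\restriction_{t(\overline{t(\dual G)})})$; the key observation is that a torsion element of $\overline{t(\dual G)}$ is a torsion element of $\dual G$, so $t(\overline{t(\dual G)})=t(\dual G)$, and hence the right-hand side coincides with $\ent(\dual\phi\restriction_{t(\dual G)})$.

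For (b), the starting point is $\ent^\star(\phi)=\ent(\dual\phi)$ from Theorem~\ref{ent*=ent^}, combined with (a) to reduce to an entropy statement about a torsion Abelian group, namely $t(\dual G)$. Applying Lemma~\ref{1.18} to the endomorphism $\dual\phi\restriction_{t(\dual G)}$ gives
$$\ent(\dual\phi\restriction_{t(\dual G)})=0 \iff \ent\bigl((\dual\phi\restriction_{t(\dual G)})\restriction_{t(\dual G)[p]}\bigr)=0 \text{ for every prime }p.$$
Finally I would observe that $t(\dual G)[p]=\dual G[p]$ (the $p$-torsion is the same whether computed inside $t(\dual G)$ or inside $\dual G$), that $\dual G[p]$ is $\dual\phi$-invariant, and that the restriction of $\dual\phi\restriction_{t(\dual G)}$ to $\dual G[p]$ is precisely $\dual\phi\restriction_{\dual G[p]}$. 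Substituting this identification into the right-hand side of the biconditional yields the statement of (b).

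There is no real obstacle here: the result is a bookkeeping combination of Theorem~\ref{ent*=ent^}, the torsion-reduction principle for algebraic entropy, and Lemma~\ref{1.18}. The only point that deserves a moment of care is the identification $t(\overline{t(\dual G)})=t(\dual G)$ used in (a), which however is immediate from the inclusion $\overline{t(\dual G)}\subseteq\dual G$.
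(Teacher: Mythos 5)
Your proof is correct and follows essentially the same route as the paper: both items reduce to the fact that $\ent(\psi)=\ent(\psi\restriction_{t(K)})$, Theorem~\ref{ent*=ent^}, and Lemma~\ref{1.18}, with the identifications $t(\overline{t(\dual G)})=t(\dual G)$ and $t(\dual G)[p]=\dual G[p]$ being the only (easy) points to check. The only cosmetic difference is in (a): the paper sandwiches $\ent(\dual\phi\restriction_{\overline{t(\dual G)}})$ between $\ent(\dual\phi\restriction_{t(\dual G)})$ and $\ent(\dual\phi)$ using monotonicity of $\ent$ under invariant subgroups, whereas you apply the torsion-reduction fact a second time to $\overline{t(\dual G)}$; both are equally valid.
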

\begin{proof}
(a) Since $t(\dual G)\leq\overline{t(\dual G)}\leq G$, and $\ent(\phi)=\ent(\phi\restriction_{t(\dual G)})$, we have the equality in (a), since $\ent(-)$ is monotone under taking invariant subgroups (see \cite[fact (g) in Section 1]{DGSZ}).

\smallskip
(b) Follows from (a) and Lemma \ref{1.18}.
\end{proof}

As a consequence of Corollary \ref{entphi*} we obtain another proof of Lemma \ref{narrow}. Indeed, if $G$ is a narrow Abelian group, then $G/pG$ is finite for every prime $p$. By facts (i), (iii) and (iv) at the beginning of Section \ref{sec-3}, $\dual G[p]\cong \dual{(G/pG)}$ is finite; if $\phi\in\End(G)$, then $\ent(\dual\phi\restriction_{\dual G[p]})=0$ for every prime $p$. By Corollary \ref{entphi*} $\ent^\star(\phi)=0$.

\medskip
As another corollary of Theorem \ref{ent*=ent^} we get a second proof of Example \ref{tf,fr->ent0}(b), using the Pontryagin duality.
Let $G$ be a torsion-free Abelian group of finite rank. We show that $\ent^\star(G)=0$.
So let $\phi\in\End(G)$. By Theorem \ref{ent*=ent^} $\ent^\star(\phi)=\ent(\dual\phi)$ and so we have to prove that $\ent(\dual\phi)=0$.
By Lemma \ref{1.18} it suffices to show that $\ent(\dual\phi\restriction_{\dual G[p]})=0$ for every prime $p$. To this end we verify that $\dual G[p]$ is finite for every prime $p$.
By hypothesis there exists $n\in\N$ such that $r_0(G)\leq n$. By facts (iv) and (iii) $\dual G[p]= (p G)^\perp\cong\dual{(G/pG)}$. Since $r_0(G)\leq n$ and $G/pG$ is $p$-bounded, it follows that $r_p(G/pG)\leq n$, where $r_p(-)$ denotes the dimension over the field $\mathbb F_p$.
Since $G/pG$ is finite, also $\dual G[p]$ is finite. Therefore $\ent(\dual\phi\restriction_{\dual G[p]})=0$.

\medskip
Note that for an Abelian group $G$, setting $G^1$ and $\phi^1$ as in \eqref{phi^1}, 
$$(G^1)^\perp=\left(\bigcap\{F\leq G:F\ \text{finite}\}\right)^\perp=\overline{\sum\{F^\perp\leq\dual G:F^\perp\ \text{finite}\}}=\overline{t(\dual G)}.$$
Consequently, for an Abelian group $G$ one has $G^1=0$ exactly when $t(\dual G)$ is dense in $\dual G$.
So for the Abelian group $G$ we have $\ent^\star(\phi^1)=\ent^\star(\phi)=\ent(\dual\phi)=\ent(\dual\phi\restriction_{\overline{t(\dual G)}})$.

\section{Applications to the Bernoulli shifts and the Addition Theorem}\label{sec6}

This section is devoted to two relevant applications of Theorem \ref{ent*=ent^}. The first application is the computation of the adjoint algebraic entropy of particularly important endomorphisms of Abelian groups, namely, the Bernoulli shifts introduced in Example \ref{bernoulli-example}. The second application is the proof of the Addition Theorem for the adjoint algebraic entropy; here the unavoidable restriction to bounded Abelian groups arises, in analogy with the restriction to torsion Abelian groups needed for the Addition Theorem for the algebraic entropy.

\medskip
The next proposition shows that in the Pontryagin duality the adjoint of a Bernoulli shift on the direct sum is a Bernoulli shift on the direct product, and this functor reverses the direction of the shift. In the notation of Example \ref{bernoulli-example}, we have the following

\begin{proposition}\label{dualbeta}
For $K=\Z(p)$, where $p$ is a prime,
\begin{itemize}
\item[(a)]$\dual{(\beta_K^\oplus)}={}_K\beta$,
\item[(b)]$\dual{( {}_K\beta^\oplus)}=\beta_K$, and
\item[(c)]$\dual{(\overline\beta_K^\oplus)}=(\overline\beta_K)^{-1}$.
\end{itemize}
\end{proposition}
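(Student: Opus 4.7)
The approach is to compute each Pontryagin adjoint directly from the explicit pairing between $K^{(\N)}$ (resp.\ $K^{(\Z)}$) and its dual. First I would use facts (i) and (ii) to identify $(K^{(\N)})^* \cong K^{\N}$: since $K = \Z(p)$ is finite and self-dual, any character $\chi$ of $K^{(\N)}$ is encoded by a sequence $\mathbf{y} = (y_n)_n \in K^{\N}$ via $\chi((x_n)) = \sum_n y_n x_n$, which is a finite sum because $(x_n)$ has finite support. The same identification handles the $\Z$-indexed case for part (c), giving $(K^{(\Z)})^* \cong K^{\Z}$.

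Next I would plug the shift operators into the definition $\dual\phi(\chi) = \chi \circ \phi$. For (a), a reindexing gives
$$\bigl(\dual{(\beta_K^\oplus)}\,\chi\bigr)\bigl((x_n)\bigr) = \sum_{n\geq 1} y_n x_{n-1} = \sum_{m\geq 0} y_{m+1} x_m,$$
so under the identification above $\dual{(\beta_K^\oplus)}\,\mathbf{y}$ corresponds to $(y_{n+1})_n$, which is precisely the image of $\mathbf{y}$ under the left shift ${}_K\beta$. Part (b) is symmetric: $\bigl(\dual{({}_K\beta^\oplus)}\,\chi\bigr)((x_n)) = \sum_n y_n x_{n+1} = \sum_{m\geq 1} y_{m-1} x_m$, so $\dual{({}_K\beta^\oplus)}$ sends $\mathbf{y}$ to the sequence obtained by a right shift with $0$ inserted in the first slot, i.e.\ to $\beta_K(\mathbf{y})$. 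For (c), the analogous computation over $\Z$ gives $(y_n)_{n\in\Z} \mapsto (y_{n+1})_{n\in\Z}$, the bilateral left shift; since $\overline\beta_K$ is the bilateral right shift $(x_n)\mapsto(x_{n-1})$, this is precisely $(\overline\beta_K)^{-1}$.

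The only real obstacle is bookkeeping. Example \ref{bernoulli-example} writes $\beta_K$ starting from $x_1$ but ${}_K\beta$ starting from $x_0$, and similar care is needed in the bilateral case, so before running the computations I would fix a uniform index set ($\N = \{0,1,2,\ldots\}$ for (a)–(b) and $\Z$ for (c)). Once the indexing is standardized and one observes that the duality pairing on direct sums naturally intertwines shifts on the sum with the opposite-direction shift on the product of characters, the three identifications follow mechanically from the above displayed lines.
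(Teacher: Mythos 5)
Your computation is correct and is essentially the same argument as the paper's: both identify $(K^{(\N)})^*$ (resp. $(K^{(\Z)})^*$) with $K^{\N}$ (resp. $K^{\Z}$) via the finite-support pairing $\chi(x)=\sum_n y_n x_n$ and then read off the adjoint of each shift by reindexing (the paper phrases this through evaluation on the canonical basis vectors $e_i$, which amounts to the same bookkeeping). No gaps; your explicit attention to the indexing conventions in Example \ref{bernoulli-example} is exactly the care the computation requires.
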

\begin{proof}
Let $\chi=(a_0,a_1,\ldots)\in K^\N=\dual{(K^{(N)})}$, and consider the $i$-th canonical vector $e_i=(\underbrace{0,\ldots,0}_i,1, 0,\ldots,0,\ldots)$ of the base of $K^{(\N)}$ for $i\in\N$. Then $\chi(e_i)=a_i$ for every $i\in\N$, and hence $\chi(x)=\sum_{i\in\N}a_i x_i$ for every $x=(x_i)_{i\in\N}\in K^{(\N)}$.

\smallskip
(a) Consider $\dual{(\beta_K^\oplus)}:K^\N\to K^\N$. Then $\dual{(\beta_K^\oplus)}(\chi)=\chi\circ\ \beta_K^\oplus=(a_1,a_2,\ldots)= {}_K\! \beta(\chi)$,  because $\chi\circ\ \beta_K(e_0)=0$ and $\chi\circ\ \beta_K(e_i)=a_{i+1}$ for every $i\in\N_+$.

\smallskip
(b) Consider $\dual{( {}_K\beta^\oplus)}:K^\N\to K^\N$. Then $\dual{({}_K\beta^\oplus)}(\chi)=\chi\circ {}_K\beta^\oplus=(0,a_0,a_1,\ldots)=\beta_K(\chi)$, because $\chi\circ {}_K\beta(e_0)=0$ and $\chi\circ {}_K\beta(e_i)=a_{i-1}$ for every $i\in\N_+$.

\smallskip
(c) Let $\chi=(\ldots,a_{-1},\underbrace{a_0}_0,a_1,\ldots)\in K^\Z=\dual{(K^{(\Z)})}$, and for $i\in\Z$ consider the $i$-th canonical vector $e_i=(\ldots,0,\ldots,0,\underbrace{1}_i, 0,\ldots,0,\ldots)$ of the base of $K^{(\Z)}$. Then $\chi(e_i)=a_i$ for every $i\in\Z$, and hence $\chi(x)=\sum_{i\in\Z}a_i x_i$ for every $x=(x_i)_{i\in\Z}\in K^{(\Z)}$.
Consider $\dual{(\overline\beta_K^\oplus)}:K^\Z\to K^\Z$. Then $\dual{(\overline\beta_K^\oplus)}(\chi)=\chi\circ\ \overline\beta_K^\oplus=(\ldots,a_0,\underbrace{a_1}_0,a_2,\ldots)= (\overline\beta_K)^{-1}(\chi)$,  because $\chi\circ\ \overline\beta_K(e_i)=a_{i+1}$ for every $i\in\Z$.
\end{proof}

In contrast with what happens for the algebraic entropy, that is, $\ent( {}_K\beta^\oplus)=0$ and $\ent(\beta_K^\oplus)=\ent(\overline\beta_K^\oplus)=\log|K|$ (see \cite[Example 1.9]{DGSZ}), we have:

\begin{proposition}\label{bernoulli}
For $K=\Z(p)$, where $p$ is a prime, $\ent^\star(\beta_K^\oplus)=\ent^\star( {}_K\beta^\oplus)=\ent^\star(\overline\beta_K^\oplus)=\infty.$
\end{proposition}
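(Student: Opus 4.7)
The plan is to apply Theorem \ref{ent*=ent^} together with Proposition \ref{dualbeta} to reduce all three identities to assertions about the algebraic entropy of shifts on the \emph{full} direct products $K^\N$ and $K^\Z$. Explicitly,
$\ent^\star(\beta_K^\oplus)=\ent({}_K\beta)$, $\ent^\star({}_K\beta^\oplus)=\ent(\beta_K)$, and $\ent^\star(\overline\beta_K^\oplus)=\ent((\overline\beta_K)^{-1})$; and since the involution $\sigma:K^\Z\to K^\Z$, $\sigma((x_n))=(x_{-n})$, conjugates $\overline\beta_K$ to $(\overline\beta_K)^{-1}$, invariance of $\ent$ under conjugation (property (A) in Section \ref{basic}) reduces the last case to $\ent(\overline\beta_K)$. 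Thus it suffices to prove $\ent({}_K\beta)=\ent(\beta_K)=\ent(\overline\beta_K)=\infty$.

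For each such shift $\phi$, my strategy is, for every $k\in\N_+$, to exhibit a finite subgroup $F_k=Kv_1\oplus\cdots\oplus Kv_k$ of the ambient product such that the family of iterates $\{\phi^iv_j:i\geq 0,\ 1\leq j\leq k\}$ is $K$-linearly independent. Once this is secured, $T_n(\phi,F_k)$ is the internal direct sum $\bigoplus_{1\leq j\leq k,\ 0\leq i<n}K\phi^iv_j$, so $|T_n(\phi,F_k)|=p^{kn}$, and hence $\lim_{n\to\infty}\log|T_n(\phi,F_k)|/n=k\log p$. Letting $k\to\infty$ forces $\ent(\phi)=\infty$. Note that $F_k$ must be chosen using elements of \emph{infinite} support, since the trajectories of finitely supported elements only yield $O(\log p)$ of entropy, reflecting the familiar fact that $\ent(\beta_K^\oplus)=\log p$ on the direct sum.

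The explicit construction: fix $k$ and take $v_j$ to be the characteristic function of the lacunary set $S_j=\{2^{km+j-1}:m\geq 0\}$ for $j=1,\ldots,k$. These are pairwise disjoint subsets of the powers of $2$, and the support of $\phi^iv_j$ is (essentially) the translate $S_j\pm i$. The geometric growth of gaps between consecutive powers of $2$ implies that, for any fixed bound $N$, all but finitely many integers $n$ admit at most one representation $n=\pm i+2^{km+j-1}$ with $0\leq i\leq N$, $1\leq j\leq k$, and $m\geq 0$. Evaluating an alleged nontrivial relation $\sum c_{i,j}\phi^iv_j=0$ at such isolating coordinates forces each $c_{i,j}=0$.

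The technical heart of the argument is precisely this lacunarity-based linear independence verification; once it is granted, counting the trajectories and passing to the limit are routine.
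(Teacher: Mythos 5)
Your proposal is correct, and it shares with the paper the same first move: reducing via Theorem \ref{ent*=ent^} and Proposition \ref{dualbeta} to the computation of $\ent({}_K\beta)$, $\ent(\beta_K)$ and $\ent((\overline\beta_K)^{-1})$ on the full products $K^\N$ and $K^\Z$. Where you diverge is in the second step: the paper simply cites \cite[Corollary 6.5]{AGB} for the fact that these three shifts have infinite algebraic entropy, whereas you prove it from scratch. Your lacunary construction does work: the sets $S_j=\{2^{km+j-1}:m\geq 0\}$ partition the powers of $2$ according to the residue of the exponent mod $k$, the supports of the iterates $\phi^i v_j$ for $0\leq i\leq N$ are translates of these sets by at most $N$, and since consecutive powers of $2$ are eventually more than $2N$ apart, each sufficiently large coordinate of $\supp(\phi^{i_0}v_{j_0})$ isolates the coefficient $c_{i_0,j_0}$ in any alleged relation; this gives the $K$-linear independence of $\{\phi^i v_j\}$, hence $\card{T_n(\phi,F_k)}=p^{kn}$ and $\ent(\phi)\geq k\log p$ for every $k$ (the truncation at coordinate $0$ for the one-sided left shift only removes finitely many isolating coordinates and is harmless). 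Your handling of the two-sided case by conjugating $(\overline\beta_K)^{-1}$ to $\overline\beta_K$ via the flip $(x_n)\mapsto(x_{-n})$ is also fine, though unnecessary: the same lacunary argument applies verbatim to $(\overline\beta_K)^{-1}$. What your route buys is self-containedness — the proposition no longer depends on an external result about generalized shifts on direct products — at the cost of a page of combinatorics; what the paper's route buys is brevity and an explicit link to the existing literature on shifts of direct products. Both are valid proofs.
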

\begin{proof}
It is known from \cite[Corollary 6.5]{AGB} that $\ent(\beta_K)=\ent( {}_K\beta)=\ent(\overline\beta_K)=\infty$. Consequently, by Theorem \ref{ent*=ent^} and Proposition \ref{dualbeta} $\ent^\star(\beta_K^\oplus)=\ent({}_K\beta)=\infty$, $\ent^\star({}_K\beta^\oplus)=\ent(\beta_K)=\infty$ and $\ent^\star(\overline\beta_K^\oplus)=\ent((\overline\beta_K)^{-1})=\infty$. 
\end{proof}

As an application of Proposition \ref{bernoulli} we can now give the example which witnesses that the continuity of the adjoint algebraic entropy for the inverse limits does not hold true. As noted before, this comment is inspired by the continuity of the algebraic entropy for direct limits of invariant subgroups (see property (D) above).

\begin{example}\label{no-lim}
Let $p$ be a prime, $G=\Z(p)^\N$ and consider $\beta_{\Z(p)}:G\to G$. For every $i\in\N$, let $$H_i=\underbrace{0\times\ldots\times 0}_i\times\Z(p)^{\N\setminus\{0,\ldots,i-1\}}\leq G.$$ Each $H_i$ is $\beta_{\Z(p)}$-invariant. The induced endomorphism $\overline{\beta_{\Z(p)}}_i:G/H_i\to G/H_i$ has $\ent^\star(\overline{\beta_{\Z(p)}}_i)=0$, since $G/H_i$ is finite. Moreover, $G=\displaystyle\lim_{\longleftarrow}G/H_i$, as $\{(G/H_i,p_i)\}_{i\in\N}$ is an inverse system, where $p_i:G/H_{i+1}\cong\Z(p)^{i+1}\to G/H_{i}\cong \Z(p)^i$ is the canonical projection for every $i\in\N$.
By Proposition \ref{bernoulli} $\ent^\star(\beta_{\Z(p)})=\infty$, while $\sup_{i\in\N}\ent^\star(\overline{\beta_{\Z(p)}}_i)=0$.
\end{example}

The dual behaviour of $\ent^\star(-)$ with respect to $\ent(-)$ fails also for other aspects. Indeed, for example \cite[Lemma 1.4]{DGSZ} states in particular that for an Abelian group $G$ and $\phi\in\End(G)$, if $G=T(\phi,F)$ for some finite subgroup $F$ of $G$, then $\ent(\phi)=H(\phi,F)$. The dual version of this lemma fails, as shown by the next example.

\begin{example}\label{not-dual-ex}
Consider $K=\Z(p)$ and ${}_K\beta^\oplus:G\to G$, where $G= K^{(\N)}$. For $N=\{0\}\oplus K^{(\N_+)}$, we have $B( {}_K\beta^\oplus,N)=\{0\}$ and so $C( {}_K\beta^\oplus,N)=G$. But $\ent^\star( {}_K\beta^\oplus)=\infty$ by Proposition \ref{bernoulli}, while $H^\star({}_K\beta^\oplus,N)=\log|K|$. In particular, this shows that $\ent^\star( {}_K\beta^\oplus)\neq H^\star( {}_K\beta^\oplus,N)$.
\end{example}

The reason of this behaviour relies in the fact that $G$ has too many subgroups of finite index, as shown by Lemma \ref{>c}. For a further failure of the dual behaviour of $\ent^\star(-)$ with respect to $\ent(-)$ see the next Remark \ref{6.7}.

\bigskip
We pass now to consider the Addition Theorem for the adjoint algebraic entropy of an endomorphisms $\phi$ of a torsion Abelian group $G$ (see \cite[Theorem 3.1]{DGSZ}, or fact (c) at the beginning of Section \ref{basic}).

The following result is the counterpart of the Addition Theorem for the adjoint algebraic entropy of endomorphisms of bounded Abelian groups. To prove it we apply Theorem \ref{ent*=ent^} and then the Addition Theorem for the algebraic entropy of endomorphisms of torsion Abelian groups (which is property (C) above).

\begin{proposition}\label{AT*}
Let $G$ be a bounded Abelian group, $\phi\in\End(G)$ and $H$ a $\phi$-invariant subgroup of $G$. Then $\ent^\star(\phi)=\ent^\star(\phi\restriction_H)+\ent^\star(\overline\phi)$, where $\overline\phi:G/H\to G/H$ is the endomorphism induced by $\phi$.
\end{proposition}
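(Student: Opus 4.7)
The plan is to reduce the statement to the Addition Theorem for the algebraic entropy on torsion Abelian groups (property (C)) via the duality formula of Theorem \ref{ent*=ent^}. The key observation that makes the boundedness hypothesis natural is: if $nG = 0$, then every $\chi \in \dual G$ satisfies $n\chi(x) = \chi(nx) = 0$ for all $x \in G$, so $\dual G$ is $n$-bounded, and in particular torsion. Hence the Addition Theorem for the algebraic entropy is applicable to any endomorphism of $\dual G$ and any invariant subgroup.

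By Lemma \ref{newfacts}(b), the $\phi$-invariance of $H$ transfers to the $\dual\phi$-invariance of the annihilator $H^\perp$ in $\dual G$. From Pontryagin duality (facts (iii) at the beginning of Section \ref{sec-3}) we have the canonical isomorphisms
$$H^\perp \cong \dual{(G/H)} \quad \text{and} \quad \dual G / H^\perp \cong \dual H.$$
A straightforward check on characters (using that, for $\chi \in H^\perp$, $\dual\phi(\chi) = \chi\circ\phi$ factors through $G/H$ because $\phi H\subseteq H$, and that an extension of a character on $H$ to $G$ behaves correctly modulo $H^\perp$) shows that these isomorphisms conjugate the restriction $\dual\phi\restriction_{H^\perp}$ to $\dual{\overline\phi}$ and the induced quotient endomorphism $\widetilde{\dual\phi}\colon \dual G/H^\perp \to \dual G/H^\perp$ to $\dual{(\phi\restriction_H)}$.

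Now apply the Addition Theorem for the algebraic entropy (property (C)) to the endomorphism $\dual\phi$ of the torsion group $\dual G$ and its invariant subgroup $H^\perp$:
$$\ent(\dual\phi) = \ent\bigl(\dual\phi\restriction_{H^\perp}\bigr) + \ent\bigl(\widetilde{\dual\phi}\bigr).$$
By invariance under conjugation (property (A)) and the identifications above,
$$\ent\bigl(\dual\phi\restriction_{H^\perp}\bigr) = \ent\bigl(\dual{\overline\phi}\bigr), \qquad \ent\bigl(\widetilde{\dual\phi}\bigr) = \ent\bigl(\dual{(\phi\restriction_H)}\bigr).$$
Finally, three applications of Theorem \ref{ent*=ent^} rewrite each term as an adjoint algebraic entropy, giving
$$\ent^\star(\phi) = \ent^\star(\phi\restriction_H) + \ent^\star(\overline\phi),$$
which is the desired identity.

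The main obstacle is the conjugacy verification in the middle paragraph: one must carefully trace through the dualities to confirm that, under $H^\perp \cong \dual{(G/H)}$, the action of $\dual\phi$ on the left corresponds to the action of $\dual{\overline\phi}$ on the right, and similarly on the quotient. These verifications are of the same nature as Lemma \ref{newfacts}(a) and Proposition \ref{perp}, and rely only on the functoriality of $(-)^*$ and the description of annihilators of sub/quotient groups. The hypothesis that $G$ be bounded (rather than merely torsion) is used in an essential way, precisely because the Addition Theorem for the algebraic entropy needs the ambient group (here $\dual G$) to be torsion, which is not automatic for the dual of an arbitrary torsion group.
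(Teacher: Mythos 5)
Your proposal is correct and follows essentially the same route as the paper: dualize the exact sequence, use that $\dual G$ is bounded (hence torsion) to apply the Addition Theorem for the algebraic entropy to $\dual\phi$ and the $\dual\phi$-invariant subgroup $H^\perp$, identify $\dual\phi\restriction_{H^\perp}$ with $\dual{\overline\phi}$ and the quotient map with $\dual{(\phi\restriction_H)}$ up to conjugation, and translate back via Theorem \ref{ent*=ent^}. No gaps.
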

\begin{proof}
For the group $G$ we have the following diagram:
\begin{equation*}
\xymatrix{
0\ar[r] & H\ar[r] \ar[d]^{\phi\restriction_{H}} & G\ar[r]\ar[d]^\phi & G/H\ar[r]\ar[d]^{\overline\phi} & 0\\
0 \ar[r] & H\ar[r] & G\ar[r] & G/H\ar[r] & 0
}
\end{equation*}
By the Pontryagin duality we have:
\begin{equation*}
\xymatrix{
0 & \dual H \ar[l] & \dual G \ar[l] & \dual{(G/H)} \ar[l] &\ar[l] 0\\
0 & \dual H \ar[l]\ar[u]_{\dual{(\phi\restriction_H)}} & \dual G \ar[l] \ar[u]_{\dual\phi} & \dual{(G/H)} \ar[l] \ar[u]_{\dual{\overline\phi}} &\ar[l] 0
}
\end{equation*}
Since $\dual{(G/H)}\cong H^\perp$, $\dual H \cong G/H^\perp$ and $H^\perp$ is $\dual\phi$-invariant respectively by fact (iii) at the beginning of Section \ref{sec-3} and Lemma \ref{newfacts}(b), we have the following diagram, where $\dual\phi\restriction_{H^\perp}$ is conjugated by an isomorphism to $\dual{\overline\phi}$, and $\overline{\dual\phi}$ is conjugated by an isomorphism to $\dual{(\phi\restriction_H)}$.
\begin{equation*}
\xymatrix{
0 & \dual G/H^\perp \ar[l]  & \dual G \ar[l] & H^\perp \ar[l] &\ar[l] 0\\
0 & \dual G/H^\perp \ar[l] \ar[u]_{\overline{\dual\phi}}& \dual G \ar[l] \ar[u]_{\dual\phi} & H^\perp \ar[l] \ar[u]_{\dual\phi\restriction_{H^\perp}} &\ar[l] 0
}
\end{equation*}
Since $G$ is bounded, $\dual G$ is bounded as well, and in particular it is a torsion Abelian group. So it is possible to apply the Addition Theorem for the algebraic entropy \cite[Theorem 3.1]{DGSZ}, which gives $\ent(\dual\phi)=\ent(\overline{\dual\phi})+\ent(\phi\restriction_{H^\perp})$. Since the algebraic entropy is preserved under taking conjugation by isomorphisms, $\ent(\dual\phi)=\ent(\dual{(\phi\restriction_H)})+\ent(\dual{\overline\phi})$. By Theorem \ref{ent*=ent^} $\ent^\star(\phi)=\ent^\star(\phi\restriction_H)+\ent^\star(\overline\phi)$.
\end{proof}

 Lemma \ref{poorAT} shows that the Addition Theorem for the adjoint algebraic entropy holds for every endomorphism $\phi$ of any Abelian group, when $\phi$ is direct product of finitely many endomorphisms. On the other hand, Example \ref{noAT*} shows that the monotonicity of the adjoint algebraic entropy under taking invariant subgroups fails even for torsion Abelian groups, so the Addition Theorem for the adjoint algebraic entropy does not hold in general for every endomorphism of Abelian groups.
Moreover, it is not possible to weaken the hypothesis of Proposition \ref{AT*}, that is, the Addition Theorem for the adjoint algebraic entropy cannot be proved with the argument used in the proof of Proposition \ref{AT*} out of the class of bounded Abelian groups. Indeed, for an Abelian group $G$, the Pontryagin dual $\dual G$ is compact, and if $\dual G$ is torsion, then it is bounded (see \cite{HR}), so $G$ is bounded as well. Since in Proposition \ref{AT*} we apply the Addition Theorem for the algebraic entropy, which holds for endomorphisms of torsion Abelian groups, $\dual G$ has to be torsion, hence bounded, and so, if we want to use this argument, $G$ has to be bounded.

\begin{corollary}\label{phi^nG}
Let $G$ be a bounded Abelian group and $\phi\in\End(G)$. For every $n\in\N$, $\ent^\star(\phi)=\ent^\star(\phi\restriction_{\phi^n G})$.
\end{corollary}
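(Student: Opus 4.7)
The plan is to reduce the corollary to a single application of the Addition Theorem (Proposition \ref{AT*}) together with the vanishing of the adjoint algebraic entropy on nilpotent endomorphisms. The case $n=0$ is trivial since $\phi^0 G = G$, so I would assume $n \geq 1$.

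First I would note that $\phi^n G$ is a $\phi$-invariant subgroup of $G$, since $\phi(\phi^n G) = \phi^{n+1} G \subseteq \phi^n G$. Hence the quotient $G/\phi^n G$ carries a well-defined induced endomorphism $\overline\phi : G/\phi^n G \to G/\phi^n G$. The key observation is that $\overline\phi$ is nilpotent: for every $x \in G$,
$$\overline\phi^{\,n}(x + \phi^n G) = \phi^n(x) + \phi^n G = 0 + \phi^n G,$$
so $\overline\phi^{\,n} = 0$. By Example \ref{multiplication}(a) (or, equivalently, Example \ref{qp->ent*=0}, since a nilpotent endomorphism is quasi-periodic), this forces $\ent^\star(\overline\phi) = 0$.

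Now I would invoke Proposition \ref{AT*}, which applies since $G$ is bounded and $\phi^n G$ is $\phi$-invariant. It yields
$$\ent^\star(\phi) \;=\; \ent^\star(\phi\restriction_{\phi^n G}) + \ent^\star(\overline\phi) \;=\; \ent^\star(\phi\restriction_{\phi^n G}) + 0,$$
which is the desired equality. There is essentially no obstacle: the only thing to verify is that the induced map on the quotient kills everything in $n$ steps, and then the Addition Theorem does all the work. The hypothesis that $G$ is bounded is inherited exactly from the use of Proposition \ref{AT*}, and cannot be dispensed with by this argument, in line with the discussion following that proposition.
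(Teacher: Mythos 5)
Your proof is correct and follows essentially the same route as the paper's: observe that the induced endomorphism on $G/\phi^n G$ is nilpotent, hence has zero adjoint entropy by Example \ref{multiplication}(a), and then apply the Addition Theorem (Proposition \ref{AT*}). The extra details you supply (the $\phi$-invariance of $\phi^n G$ and the trivial case $n=0$) are fine but not substantively different from the paper's argument.
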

\begin{proof}
For $n\in\N_+$, consider the induced endomorphism $\overline\phi_n:G/\phi^n G\to G/\phi^n G$. Since $(\overline\phi_n)^n=0_{G/\phi^n G}$, as observed in Example \ref{multiplication}(a) $\ent^\star(\overline\phi_n)=0$, and hence by Proposition \ref{AT*} $\ent^\star(\phi)=\ent^\star(\phi\restriction_{\phi^n G})$.
\end{proof}

\begin{remark}\label{6.7}
For the algebraic entropy there is a natural reduction to injective endomorphisms. Indeed, for a torsion Abelian group $G$ and $\phi\in\End(G)$, in \cite{DGSZ} the $\phi$-torsion subgroup of $G$ was introduced, that is, $t_\phi(G)=\{x\in G:T(\phi,x)\ \text{finite}\}$; moreover, it was proved that $\ent(\phi)=\ent(\overline\phi)$, where $\overline\phi:G/t_\phi(G)\to G/t_\phi(G)$ is the induced endomorphism. Since $t_\phi(G)\geq \ker_\infty\phi:=\bigcup_{n\in\N_+}\ker\phi^n$ \cite[Lemma 2.3]{DGSZ}, and $\ker_\infty\phi$ is the minimum subgroup of $G$ such that the endomorphism induced by $\phi$ on $G/\ker_\infty\phi\to G/\ker_\infty\phi$ is injective, it follows that $\overline \phi$ is injective.

So, if the adjoint algebraic entropy was dual with respect to the algebraic entropy, then it would have an analogous reduction to surjective endomorphism. This seems to be suggested also by Corollary \ref{phi^nG}, since the dual subgroup with respect to $\ker_\infty\phi$ is $\Im_\infty\phi:=\bigcap_{n\in\N_+}\phi^nG$. 
But $\ent^\star(\phi)$ does not coincide with $\ent^\star(\phi\restriction_{\Im_\infty\phi})$ in general, even for $p$-bounded Abelian groups; indeed, for $G=\Z(p)^{(\N)}$ and the right Bernoulli shift $\beta_{\Z(p)}^\oplus$, we have $\ent^\star(\beta_{\Z(p)}^\oplus)=\infty$ (see Proposition \ref{bernoulli}) while $\Im_\infty\phi=0$.
\end{remark}

\section{Dichotomy}\label{dichotomy}

The value of the adjoint algebraic entropy of the Bernoulli shifts is infinite, as proved in Proposition \ref{bernoulli}, and this suggests that the behaviour of $\ent^\star(\phi)$ presents a dichotomy, namely, for every Abelian group $G$ and every $\phi\in\End(G)$, either $\ent^\star(\phi)=0$ or $\ent^\star(\phi)=\infty$. 

\medskip
The following lemma permits to reduce to the case of $p$-bounded Abelian groups, with $p$ a prime.

\begin{lemma}\label{red-to-p}
Let $G$ be an Abelian group and $\phi\in\End(G)$. If $\ent^\star(\phi)>0$, then there exists a prime $p$ such that $\ent^\star(\overline\phi_p)>0$, where $\overline \phi_p:G/p G\to G/p G$ is induced by $\phi$. In particular, if $0<\ent^\star(\phi)<\infty$, then there exists a prime $p$ such that $0<\ent^\star(\overline\phi_p)<\infty$.
\end{lemma}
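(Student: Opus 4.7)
The proof will reduce almost entirely to combining Theorem~\ref{ent*=ent^}, Corollary~\ref{entphi*}(b) and Lemma~\ref{newfacts}(a): the dichotomy-flavoured content has already been packaged for us on the dual side.

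Here is the plan. First, invoke Theorem~\ref{ent*=ent^} to rewrite the hypothesis as $\ent(\dual\phi)>0$, and then apply the contrapositive of Corollary~\ref{entphi*}(b): since $\ent^\star(\phi)\neq 0$, there must exist some prime $p$ such that $\ent(\dual\phi\restriction_{\dual G[p]})>0$. Now Lemma~\ref{newfacts}(a) furnishes an isomorphism $\xi:\dual G[p]\to \dual{(G/pG)}$ conjugating $\dual\phi\restriction_{\dual G[p]}$ to $\dual{\overline\phi_p}$, so by invariance of algebraic entropy under isomorphisms (property (A) from Section~\ref{basic}) we get $\ent(\dual{\overline\phi_p})=\ent(\dual\phi\restriction_{\dual G[p]})>0$. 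Finally, applying Theorem~\ref{ent*=ent^} once more, this time to the endomorphism $\overline\phi_p$ of the group $G/pG$, we conclude $\ent^\star(\overline\phi_p)=\ent(\dual{\overline\phi_p})>0$, as desired.

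For the ``in particular'' clause, I will additionally need the upper bound $\ent^\star(\overline\phi_p)\leq \ent^\star(\phi)$. This is immediate from Lemma~\ref{quotient}: the subgroup $pG$ is $\phi$-invariant (since $\phi$ commutes with multiplication by $p$), so the endomorphism induced on the quotient $G/pG$ is exactly $\overline\phi_p$ and its adjoint algebraic entropy is bounded above by $\ent^\star(\phi)$. Combined with the first part, if $0<\ent^\star(\phi)<\infty$, then the prime $p$ produced above satisfies $0<\ent^\star(\overline\phi_p)\leq \ent^\star(\phi)<\infty$.

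There is essentially no obstacle here beyond assembling the right ingredients: the genuine work is buried in Theorem~\ref{ent*=ent^} (the Pontryagin-duality bridge) and in the algebraic-entropy fact (from Lemma~\ref{1.18}, which feeds Corollary~\ref{entphi*}(b)) that a torsion endomorphism has zero entropy iff its restrictions to all $p$-socles do. The only cautionary point to verify carefully is that Lemma~\ref{newfacts}(a) really gives a conjugation rather than merely a semi-conjugation, so that the invariance of entropy under isomorphism applies; this is built into the statement of that lemma, which asserts equality $\xi\circ\dual\phi\restriction_{\dual G[p]}=\dual{\overline\phi_p}\circ\xi$ with $\xi$ an isomorphism.
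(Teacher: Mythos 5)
Your proof is correct and follows essentially the same route as the paper's: Corollary~\ref{entphi*}(b) to locate a prime $p$ with $\ent(\dual\phi\restriction_{\dual G[p]})>0$, Lemma~\ref{newfacts}(a) plus invariance under conjugation to identify this with $\ent(\dual{\overline\phi_p})$, Theorem~\ref{ent*=ent^} to return to $\ent^\star(\overline\phi_p)$, and Lemma~\ref{quotient} for the upper bound in the ``in particular'' clause. The only cosmetic difference is your initial detour through $\ent(\dual\phi)>0$, which is not needed since Corollary~\ref{entphi*}(b) is already stated in terms of $\ent^\star(\phi)$.
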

\begin{proof}
By Corollary \ref{entphi*}(b), $\ent^\star(\phi)>0$ is equivalent to the existence of a prime $p$ such that $\ent(\dual\phi\restriction_{\dual G[p]})>0$. By facts (iii) and (iv) at the beginning of Section \ref{sec-3}, $\dual G[p]=(pG)^\perp\cong\dual{(G/pG)}$, and by Lemma \ref{newfacts}(a), $\dual{\overline\phi_p}$ is conjugated to $\dual\phi\restriction_{\dual G[p]}$ by an isomorphism; therefore, by Theorem \ref{ent*=ent^}, $\ent^\star(\overline\phi_p)=\ent(\dual{\overline\phi_p})=\ent(\dual\phi\restriction_{\dual G[p]})>0$, where the second equality follows from property (A) at the beginning of Section \ref{basic}.
If $\ent^\star(\phi)<\infty$, then $\ent^\star(\overline\phi_p)<\infty$ for every prime $p$, as $\ent^\star(\overline\phi_p)\leq\ent^\star(\phi)$ by Lemma \ref{quotient}.
\end{proof}

The preceding lemma leads us to the investigation of linear transformations of vector spaces over the Galois field with $p$ elements $\mathbb F_p$ (where $p$ is a prime).
So let $V$ be a vector space over the field $\mathbb F_p$, and let $\phi:V\to V$ be a linear transformation. Let $\mathbb F_p[X]$ be the ring of polynomials in the variable $X$ with coefficients in $\mathbb F_p$. Following Kaplansky's book \cite[Chapter 12]{K}, for $v\in V$ and $f\in \mathbb F_p[X]$, define $$f(X)\cdot v= f(\phi)(v).$$
In detail, if $f(X)=a_0+a_1 X+\ldots + a_m X^m$, then $$f(X)\cdot v=a_0 v+a_1 \phi(v)+\ldots+ a_m\phi^m(v).$$
To underline the role of $\phi$ in this definition, we consider $\mathbb F_p[\phi]$ instead of $\mathbb F_p[X]$ (they are isomorphic in case $\phi$ is not algebraic).
This definition makes $V$ a $\mathbb F_p[\phi]$-module. When we consider $V$ as a $\mathbb F_p[\phi]$-module, we write $V_\phi$.

We say that an element $v\in V_\phi$ is \emph{torsion}, if there exists a non-zero $f \in \mathbb F_p[X]$ such that $f(\phi)(v)=0$. Furthermore, $V_\phi$ is \emph{torsion} if each element of $V_\phi$ is torsion; this means that $\phi$ is locally algebraic, since every $v\in V$ is the root of a polynomial in $\mathbb F_p[X]$. Moreover, $V_\phi$ is \emph{bounded} if there exists $0\neq f \in \mathbb F_p[X]$ such that $f(\phi)V=0$, i.e., $f(\phi)=0$. This amounts to say that $\phi$ is algebraic over $\mathbb F_p$.

\begin{lemma}\label{ent*:pol:phi}
\begin{itemize}
\item[(a)]Let $G$ be an Abelian group and $\phi\in\End(G)$. If $f\in\Z[X]$, then $\ent^\star(f(\phi))\leq\deg f\cdot\ent^\star(\phi)$.
\item[(b)]Let $V$ be a vector space over the field $\mathbb F_p$, and $\phi:V\to V$ a linear transformation. If $f\in \mathbb F_p[X]$, then $\ent^\star(f(\phi))\leq\deg f\cdot\ent^\star(\phi)$.
\end{itemize}
\end{lemma}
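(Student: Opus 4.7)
My plan is to prove both parts by a single uniform argument working directly with the defining cotrajectories. Part (b) reduces to part (a): every subgroup of the $p$-bounded group $V$ is automatically an $\mathbb F_p$-subspace, and replacing each coefficient $a_i \in \mathbb F_p$ by its integer representative in $\{0, 1, \ldots, p-1\}$ yields the same endomorphism of $V$ (as an Abelian group); so it suffices to prove (a) applied to $V$ with $f$ reinterpreted in $\Z[X]$. In part (a), set $d = \deg f$ and $\psi = f(\phi) = \sum_{i=0}^d a_i \phi^i$. The case $d = 0$ is handled by Example \ref{multiplication}(b), since then $\psi$ is a multiplication by an integer, and the case $\ent^\star(\phi) = \infty$ (with $d \geq 1$) is immediate; so I may assume $d \geq 1$.

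Fix $N \in \CC(G)$. The crux of the argument is the inclusion
$$B_{(k-1)d+1}(\phi, N) \subseteq B_k(\psi, N), \qquad k \geq 1.$$
To check it, note that $\psi^j = (f(X)^j)(\phi)$ where $f(X)^j \in \Z[X]$ has degree $jd$, so $\psi^j(x)$ is a $\Z$-linear combination of $\phi^0(x), \phi^1(x), \ldots, \phi^{jd}(x)$. If $x$ lies on the left-hand side, then $\phi^i(x) \in N$ for every $0 \leq i \leq (k-1)d$; hence for every $0 \leq j \leq k-1$ (so that $jd \leq (k-1)d$), $\psi^j(x)$ is a $\Z$-linear combination of elements of the subgroup $N$ and therefore lies in $N$. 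This shows $x \in B_k(\psi, N)$.

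From the inclusion we get a canonical surjection $C_{(k-1)d+1}(\phi, N) \twoheadrightarrow C_k(\psi, N)$, and therefore
$$\frac{\log|C_k(\psi, N)|}{k} \leq \frac{(k-1)d+1}{k} \cdot \frac{\log|C_{(k-1)d+1}(\phi, N)|}{(k-1)d+1}.$$
By Proposition \ref{C=C_n->ent*=0}(a), the right-hand factor converges to $H^\star(\phi, N)$ as $k \to \infty$ while the prefactor tends to $d$; passing to the limit yields $H^\star(\psi, N) \leq d \cdot H^\star(\phi, N)$, and taking the supremum over $N \in \CC(G)$ gives $\ent^\star(\psi) \leq d \cdot \ent^\star(\phi)$, as required. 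The only delicate point, which I expect to be the main obstacle, is the bookkeeping that pins down the correct index shift $k \mapsto (k-1)d+1$ so that every $\psi$-iterate up to $\psi^{k-1}$ is expressible within the $\phi$-trajectory of length $(k-1)d+1$; once that is in place, the rest is a routine limit computation and gives no difficulty.
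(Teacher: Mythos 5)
Your proof is correct and follows essentially the same route as the paper's: the heart of both arguments is the inclusion $B_{m}(\phi,N)\subseteq B_n(f(\phi),N)$ for $m\approx n\deg f$ (the paper uses $m=n\deg f$, you use the marginally sharper $m=(n-1)\deg f+1$), obtained from the observation that $f(\phi)^j(x)$ is a $\Z$-linear combination of $\phi^i(x)$ with $i\leq j\deg f$. The only cosmetic difference is the endgame: the paper routes the resulting inequality through the identity $C_{nk}(\phi,N)=C_n(\phi^k,B_k(\phi,N))$ and the logarithmic law $\ent^\star(\phi^k)=k\,\ent^\star(\phi)$ of Lemma \ref{ll}, whereas you compute the limit directly with the prefactor $\frac{(k-1)d+1}{k}\to d$; both are valid, and your reduction of (b) to (a) by lifting the coefficients to integer representatives is a legitimate substitute for the paper's ``verbatim'' remark.
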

\begin{proof}
(a) Let $f=a_0+a_1 X+\ldots+ a_k X^k$, where $k=\deg f$ and $a_0,a_1,\ldots,a_k\in \Z$. Let $n\in\N_+$ and $N\in\CC(G)$. Then an easy check shows that $$B_n(f(\phi),N)\geq B_{kn}(\phi,N).$$
Consequently, also by \eqref{Cnkf=Cnfk} in Lemma \ref{ll}, $$\card{C_n(f(\phi),N)}\leq\card{C_{kn}(\phi,N)}=\card{C_n(\phi^k,B_k(\phi,N))}.$$ Hence, $H^\star(f(\phi),N)\leq H^\star(\phi^k,B_k(\phi,N))\leq\ent^\star(\phi^k)$, and so $\ent^\star(f(\phi))\leq\ent^\star(\phi^k)$. By Lemma \ref{ll}, $\ent^\star(\phi^k)=k\cdot\ent^\star(\phi)$.

\smallskip
(b) Is obtained verbatim from (a), replacing $a_i\in\Z$ with $a_i\in \mathbb F_p$.
\end{proof}

In general, for an endomorphism $\phi$ of an Abelian group $G$, $\ent(\phi)>0$ need not imply $\ent^\star(\phi)>0$; but for bounded Abelian groups this holds in the following strong form.

\begin{lemma}\label{ent*>ent}
Let $G$ be a bounded Abelian group and $\phi\in\End(G)$. If $\ent(\phi)>0$, then $\ent^\star(\phi)=\infty$.
\end{lemma}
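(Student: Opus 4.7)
My plan is to locate inside $G$ a $\phi$-invariant subspace on which $\phi$ acts as the right Bernoulli shift $\beta_{\mathbb F_p}^\oplus$ for some prime $p$, and then invoke the Addition Theorem for bounded groups (Proposition~\ref{AT*}) together with Proposition~\ref{bernoulli} to conclude that $\ent^\star(\phi)=\infty$.

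First I would pass from $\phi$ on $G$ to its restriction to the socle $G[p]$ for an appropriate prime $p$. Since $G$ is bounded (hence torsion), Lemma~\ref{1.18} applied to $\ent(\phi)>0$ yields a prime $p$ with $\ent(\phi\restriction_{G[p]})>0$. Now $G[p]$ is a $\phi$-invariant $\mathbb F_p$-vector space, and Proposition~\ref{non-torsion->ent*=infty} guarantees that $\phi\restriction_{G[p]}$ is not locally quasi-periodic; so some $x\in G[p]$ has infinite $\phi$-trajectory $T(\phi,x)$. Since $\mathbb F_p$ is a finite field, having infinite trajectory in an $\mathbb F_p$-vector space is equivalent to the family $\{\phi^n x\}_{n\geq 0}$ being $\mathbb F_p$-linearly independent; indeed, otherwise the finite-dimensional span over $\mathbb F_p$ would be finite and hence $\phi^s x=\phi^t x$ for some $s<t$, contradicting the infinitude of the trajectory.

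Setting $V_0=T(\phi,x)$, I obtain a $\phi$-invariant subspace of $G[p]$ admitting $\{\phi^n x\}_{n\geq 0}$ as an $\mathbb F_p$-basis, on which $\phi\restriction_{V_0}$ acts by $\phi^n x\mapsto\phi^{n+1}x$. Through the obvious $\mathbb F_p$-linear isomorphism with $\mathbb F_p^{(\N)}$, this is exactly the right Bernoulli shift $\beta_{\mathbb F_p}^\oplus$. Combining the invariance of $\ent^\star$ under conjugation (Lemma~\ref{cbi}) with Proposition~\ref{bernoulli} then yields
$$\ent^\star(\phi\restriction_{V_0})=\ent^\star(\beta_{\mathbb F_p}^\oplus)=\infty.$$

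The final step is to propagate this infinite value from $V_0$ back up to $G$. Since $G$ is bounded and $V_0$ is $\phi$-invariant, Proposition~\ref{AT*} gives
$$\ent^\star(\phi)=\ent^\star(\phi\restriction_{V_0})+\ent^\star(\overline\phi)\geq\ent^\star(\phi\restriction_{V_0})=\infty,$$
as required. The only step where the hypothesis does substantive work is this closing appeal to the Addition Theorem: monotonicity of $\ent^\star$ under restriction to an invariant subgroup can fail for unbounded $G$ (cf.\ Example~\ref{noAT*}), so the boundedness hypothesis is precisely what allows the Bernoulli-shift behaviour on $V_0$ to be transported back to $\phi$ on all of $G$.
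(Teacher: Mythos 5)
Your proposal is correct and follows essentially the same route as the paper: reduce to the socle $G[p]$ via Lemma~\ref{1.18}, extract an infinite trajectory $T(\phi,x)$ via Proposition~\ref{non-torsion->ent*=infty}, recognize $\phi$ on it as a right Bernoulli shift with $\ent^\star=\infty$ by Proposition~\ref{bernoulli}, and transport this to $G$ by the Addition Theorem (Proposition~\ref{AT*}). The only differences are cosmetic: you spell out the linear-independence and conjugation details the paper leaves implicit, and you apply the Addition Theorem once from $T(\phi,x)\leq G$ rather than passing through $G[p]$.
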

\begin{proof}
By Lemma \ref{1.18} there exists a prime $p$ such that $\ent(\phi_p)>0$, where $\phi_p=\phi\restriction_{G[p]}$. By Fact \ref{non-torsion->ent*=infty} this is equivalent to the existence of an infinite trajectory $T(\phi_p,x)=\sum_{n\in\N}\phi_p^n\hull x$ for some $x\in G[p]$. Then $\phi\restriction_{T(\phi_p,x)}$ is a right Bernoulli shift, and so $\ent^\star(\phi_p\restriction_{T(\phi_p,x)})=\infty$ by Proposition \ref{bernoulli}. Now Proposition \ref{AT*} yields $\ent^\star(\phi_p)=\infty$ as well, and hence $\ent^\star(\phi)=\infty$.
\end{proof}

In particular, Lemma \ref{ent*>ent} shows that $\ent^\star(\phi)\geq\ent(\phi)$ for any endomorphism $\phi$ of a bounded Abelian group $G$. Boundedness is essential in this lemma, since the right Bernoulli shift $\beta_{\Z(p^\infty)}$ of the group $\Z(p^\infty)^{(\N)}$ has $\ent(\beta_{\Z(p^\infty)})=\infty$, while $\ent^\star(\beta_{\Z(p^\infty)})=0$ by Example \ref{example1} since $\Z(p^\infty)$ is divisible. 

\begin{lemma}\label{alg->qp}
Let $p$ be a prime, $V$ a vector space over $\mathbb F_p$ and $\phi:V\to V$ a linear transformation. If $\phi$ is algebraic, then $\phi$ is quasi-periodic.
\end{lemma}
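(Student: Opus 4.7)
The plan is to exploit the finiteness that comes from working over $\mathbb F_p$: algebraicity forces $\phi$ to live in a finite $\mathbb F_p$-algebra, so the semigroup of its non-negative powers must be finite, and a pigeonhole argument then delivers quasi-periodicity.

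In detail, suppose $f \in \mathbb F_p[X]$ is a nonzero polynomial of degree $d$ with $f(\phi) = 0$. First I would normalize $f$ to be monic (multiplying by the inverse of its leading coefficient, which exists in the field $\mathbb F_p$). From $f(\phi) = 0$ I read off an expression for $\phi^d$ as an $\mathbb F_p$-linear combination of $\mathrm{id}_V, \phi, \ldots, \phi^{d-1}$. A routine induction on $n \geq d$ then shows that every power $\phi^n$ lies in the $\mathbb F_p$-linear span $W$ of $\{\mathrm{id}_V, \phi, \ldots, \phi^{d-1}\}$ inside the endomorphism ring $\mathrm{End}(V)$.

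Now $W$ is an $\mathbb F_p$-vector space of dimension at most $d$, hence $|W| \leq p^d$ is finite. Consequently the set $S = \{\phi^n : n \in \mathbb N\} \subseteq W$ is finite. By the pigeonhole principle applied to the infinite sequence $\phi^0, \phi^1, \phi^2, \ldots$ in the finite set $S$, there exist $s < t$ in $\mathbb N$ with $\phi^s = \phi^t$. This is exactly the definition of quasi-periodicity of $\phi$, completing the proof.

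I do not expect any real obstacle: the argument is essentially the observation that $\mathbb F_p[\phi]$ is a finite ring (a quotient of the finite ring $\mathbb F_p[X]/(f)$), so the multiplicative semigroup generated by $\phi$ is finite. The only point that requires a line of justification is the inductive step reducing higher powers modulo $f(\phi) = 0$, and this is a direct division-with-remainder argument in $\mathbb F_p[X]$.
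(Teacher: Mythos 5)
Your proof is correct. Both arguments ultimately rest on the finiteness of $\mathbb F_p[X]/(f)$, but your route differs from the paper's in a way worth spelling out. The paper passes to the minimal polynomial $f$ of $\phi$, asserts that $f$ is irreducible, concludes that $\mathbb F_p[X]/(f)$ is a finite field of order $p^{\deg f}$, and reads off the explicit relation $\phi^{p^{\deg f}}=\phi$. That irreducibility claim is not justified and is in fact false in general: a nonzero nilpotent $\phi$ on $\mathbb F_p^2$ has minimal polynomial $X^2$, and there $\phi^{p^2}=0\neq\phi$ (though $\phi^2=\phi^3$, so quasi-periodicity survives). Your argument sidesteps this entirely: you use only that $\mathbb F_p[\phi]$ is spanned over $\mathbb F_p$ by $\mathrm{id}_V,\phi,\dots,\phi^{d-1}$, hence is a finite set containing every power $\phi^n$, and the pigeonhole principle yields $\phi^s=\phi^t$ for some $s<t$. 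Since the definition of quasi-periodicity asks only for some coincidence of powers, not for a relation of the special form $\phi^k=\phi$, your more elementary argument is not just valid but actually repairs the gap in the paper's own proof; what you lose is only the explicit bound on the quasi-period that the paper was (incorrectly, in general) aiming for. The one step you flagged, reducing $\phi^n$ for $n\ge d$ via the monic relation, is indeed a routine induction and needs no further care.
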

\begin{proof}
Let $f\in\mathbb F_p[X]$ be the minimal polynomial such that $f(\phi)=0$. In particular, $f$ is irreducible. Then $\frac{\mathbb F_p[X]}{(f)}$ is a finite field of cardinality $p^{\deg f}$. Hence, $\phi^{p^{\deg f}}=\phi$.
\end{proof}

The proof of the next theorem contains the core of the dichotomy for the adjoint algebraic entropy.

\begin{theorem}\label{dich}
Let $p$ be a prime, $V$ a vector space over $\mathbb F_p$ and $\phi:V\to V$ a linear transformation. The following conditions are equivalent:
\begin{itemize}
\item[(a)] $\phi$ is algebraic (i.e., $V_\phi$ is a bounded $\mathbb F_p[\phi]$-module);
\item[(b)] $\phi$ is quasi-periodic;
\item[(c)] $\ent^\star(\phi)=0$;
\item[(d)] $\ent^\star(\phi)<\infty$.
\end{itemize}
\end{theorem}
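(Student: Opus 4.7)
The implications (a)$\Rightarrow$(b), (b)$\Rightarrow$(c), (c)$\Rightarrow$(d) follow from Lemma~\ref{alg->qp}, Example~\ref{qp->ent*=0}, and the trivial inequality $0<\infty$, respectively. The plan for (d)$\Rightarrow$(a) is to argue the contrapositive: assuming $\phi$ is not algebraic, I will show that $\ent^\star(\phi)=\infty$.

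If $\phi$ is not locally algebraic, then Proposition~\ref{non-torsion->ent*=infty} gives $\ent(\phi)>0$, and Lemma~\ref{ent*>ent} upgrades this to $\ent^\star(\phi)=\infty$. So I may assume that $\phi$ is locally algebraic but not algebraic; viewing $V$ as a module over the PID $\mathbb{F}_p[X]$ via $X\mapsto\phi$, this is the assertion that $V$ is torsion yet not bounded.

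The heart of the argument is to produce a $\phi$-invariant submodule $W\leq V$ of the shape $W\cong\bigoplus_{n\geq 1}\mathbb{F}_p[X]/(\mu_n)$ with $\deg\mu_n\to\infty$ (the $\phi$-action being multiplication by $X$ on each summand); once produced, since $V$ is $p$-bounded, the Addition Theorem (Proposition~\ref{AT*}) forces $\ent^\star(\phi)\geq\ent^\star(\phi\restriction_W)$, so it suffices to verify $\ent^\star(\phi\restriction_W)=\infty$. To construct $W$, I use the primary decomposition $V=\bigoplus_q V(q)$ over $\mathbb{F}_p[X]$: unboundedness forces either that infinitely many $V(q)$ are nonzero (in which case finite sums $v_n=\sum_{i=1}^n v_{q_i^{(n)}}$ along disjoint blocks of primes combine, via CRT, into elements with $\deg\mu_{v_n}\to\infty$ and $\bigoplus\langle v_n\rangle$ automatically direct), or that some single $V(q)$ is itself unbounded (in which case a Kulikov-style inductive argument in $V(q)$, lifting cosets of increasing $q$-order from $V(q)/W_k$ while preserving the purity of the partial sums $W_k$, yields the required $w_n$).

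To verify $\ent^\star(\phi\restriction_W)=\infty$, set $\psi=\phi\restriction_W$ and apply Theorem~\ref{ent*=ent^} to obtain $\ent^\star(\psi)=\ent(\dual\psi)$; now $\dual W\cong\prod_n\dual{W_n}$, and each $\dual{W_n}$ is a cyclic $\mathbb{F}_p[X]$-module isomorphic to $W_n$, with some cyclic generator $\alpha_n\in\dual{W_n}$. Partition $\mathbb{N}_+=A_1\sqcup A_2\sqcup\cdots$ into infinitely many infinite subsets, and for each $r\geq 1$ and each $1\leq j\leq r$ let $\chi^{(j)}\in\dual W$ have $n$-th coordinate $\alpha_n$ for $n\in A_j$ and $0$ otherwise. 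Disjoint supports across $j$, combined with $\deg\mu_n\to\infty$ within each $A_j$, force the $rN$ vectors $(\dual\psi)^k(\chi^{(j)})$ for $0\leq k<N$, $1\leq j\leq r$, to be $\mathbb{F}_p$-linearly independent in $\dual W$ for every $N$ (in any coordinate $n\in A_{j_0}$ with $\deg\mu_n\geq N$, a vanishing relation reduces to a polynomial relation mod $\mu_n$ of degree $<N$, hence forcing all coefficients with $j=j_0$ to vanish). Consequently, for the finite subgroup $F_r=\langle\chi^{(1)},\ldots,\chi^{(r)}\rangle$ one obtains $\card{T_N(\dual\psi,F_r)}=p^{rN}$ and so $H(\dual\psi,F_r)=r\log p$; letting $r\to\infty$ yields $\ent(\dual\psi)=\infty$. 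The principal obstacle is the construction of $W$ in the primary case, where one must simultaneously secure directness of the summation and the prescribed $q$-orders of the $w_n$ via a careful purity-preserving induction.
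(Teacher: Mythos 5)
Your implications (a)$\Rightarrow$(b)$\Rightarrow$(c)$\Rightarrow$(d) and your treatment of the non-locally-algebraic case coincide with the paper's, and your duality computation on a module of the form $W\cong\bigoplus_{n}\mathbb F_p[X]/(\mu_n)$ with $\deg\mu_n\to\infty$ is correct (indeed, your explicit count $\card{T_N(\dual\psi,F_r)}=p^{rN}$ is more quantitative than the paper's argument, which only extracts $\ent(\dual\phi\restriction_{W_i})\geq\log p$ per block from failure of local quasi-periodicity); your appeal to Proposition \ref{AT*} for monotonicity on the $\phi$-invariant subgroup $W$ of the $p$-bounded group $V$ is also the right justification. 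However, there is a genuine gap in the construction of $W$: your case analysis (infinitely many nonzero primary components, or one unbounded primary component attacked by a Kulikov-style, purity-preserving induction) silently assumes that unboundedness of a primary component $V(q)$ is witnessed by its reduced part. If $V(q)$ is unbounded only because of a nonzero divisible part of finite uniform dimension --- the extreme case being $V_\phi$ itself a single Pr\"ufer-type module $E(\mathbb F_p[X]/(q))$, which occurs concretely for $\phi={}_{\Z(p)}\beta^\oplus$ on $\Z(p)^{(\N)}$ --- then $V$ contains \emph{no} infinite direct sum of nonzero $\phi$-invariant subspaces at all (its invariant subspaces form a chain, and in general a finite direct sum of Pr\"ufer modules plus a bounded part has finite uniform dimension after multiplying away the bounded part), so no such $W$ exists and the inductive lifting cannot preserve directness or purity even at the first step. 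This missing case is not vacuous: it is exactly a case the theorem must cover, since $\ent^\star({}_{\Z(p)}\beta^\oplus)=\infty$ by Proposition \ref{bernoulli}.

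The paper closes this hole by a separate case before the reduced one: if the torsion module $V_\phi$ is not reduced, one finds an irreducible $f$ and an independent family $\{v_n\}_{n\in\N}$ with $f(\phi)(v_0)=0$ and $f(\phi)(v_{n+1})=v_n$, so that $f(\phi)$ acts on $\hull{v_n:n\in\N}$ as a left Bernoulli shift; then $\ent^\star(f(\phi))=\infty$ by Proposition \ref{bernoulli} (with Proposition \ref{AT*} giving monotonicity), and Lemma \ref{ent*:pol:phi}(b) yields $\ent^\star(\phi)=\infty$ since $\ent^\star(f(\phi))\leq\deg f\cdot\ent^\star(\phi)$. To repair your proof, add this non-reduced case (or, equivalently, run your Kulikov argument only on the reduced part and treat an unbounded divisible part by the Bernoulli-shift/polynomial argument); the reduced unbounded case and your duality computation then stand as written.
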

\begin{proof}
(a)$\Rightarrow$(b) is Lemma \ref{alg->qp}, 
(b)$\Rightarrow$(c) is Example \ref{qp->ent*=0} and
(c)$\Rightarrow$(d) is obvious.

\smallskip
(d)$\Rightarrow$(a) Assume by way of contradiction $\phi$ is not algebraic, that is, that $V_\phi$ is not bounded. We prove that $\ent^\star(\phi)=\infty$.
 
If $V_\phi$ is not torsion (i.e., $\phi$ is not locally algebraic), by Proposition \ref{non-torsion->ent*=infty} $\ent(\phi)>0$ and so Lemma \ref{ent*>ent} gives $\ent^\star(\phi)=\infty$.
Thus let us assume that $V_\phi$ is torsion. 

First suppose that the module $V_\phi$ is not reduced. Then there exist $f\in \mathbb F_p[X]$ irreducible and a $\mathbb F_p$-independent family $\{v_n\}_{n\in\N}\subseteq V$ such that $$f(\phi)(v_0)=0, f(\phi)(v_1)=v_0,\ldots,f(\phi)(v_{n+1})=v_n,\ldots\ .$$ Then $f(\phi)$ is a left Bernoulli shift on $\hull{v_n:n\in\N}$, and $\ent^\star(f(\phi))=\infty$ by Proposition \ref{bernoulli}. By Lemma \ref{ent*:pol:phi}(b) $\ent^\star(\phi)=\infty$ as well.

Finally, suppose that $V_\phi$ is reduced torsion unbounded. It is well-known that there exist infinitely many irreducible polynomials in $\mathbb F_p[X]$, and so there exist irreducible polynomials in $\mathbb F_p[X]$ of arbitrarily large degree. 
Then $V_\phi$ contains $\bigoplus_{n\in\N}V_n$, where $V_n=\K[X]/(f_n)$, with $f_n\in\K[X]$ and $d_n=\deg(f_n)$ for every $n\in\N$; moreover, $d_0<d_1<\ldots<d_n<\ldots$, and either the $f_n$'s are pairwise distinct and irreducible, or $f_n=f^{r_n}$ for some $f\in\K[X]$ irreducible with $r_n\in\N$ for every $n\in\N$. Since each $V_n$ is a cyclic $\K[\phi]$-module, $V_n=T(\phi,v_n)$ for some $v_n\in V_n$, and so in particular each $V_n$ is a $\phi$-invariant subspace of $V$ (with $\dim(V_n)=d_n$) and an indecomposable $\K[X]$-module.

Assume without loss of generality that $V_\phi=\bigoplus_{n\in\N}V_n$. Let $\phi_n=\phi\restriction_{V_n}$ for every $n\in\N$.
Consider $\dual\phi:\dual V\to \dual V$. By facts (i) and (ii) in Section \ref{sec-3}, $\dual V\cong \prod_{n\in\N}\dual V_n$ and $\dual V_n\cong V_n$ for every $n\in\N$. Moreover, $\dual V_m\cong(\bigoplus_{n\neq m}V_n)^\perp$ and so $\dual V_m$ is $\dual\phi$-invariant by Lemma \ref{newfacts}(b), and $\dual\phi\restriction_{\dual V_m}=\dual\phi_m$.
Since $(V_n)_\phi$ is indecomposable, $(\dual V_n)_{\dual\phi}$ is indecomposable as well. 
Furthermore, being finite (and so bounded), $(\dual V_n)_{\dual\phi}$ is direct sum of cyclic $\K[\dual\phi]$-modules \cite{K}, and so $(\dual V_n)_{\dual\phi}$ has to be itself a cyclic $\K[\dual\phi]$-module. 
In other words, $(\dual V_n)_{\dual\phi}= \K[\dual\phi]\cdot \nu_n$, that is $\dual V_n=T(\dual\phi,\nu_n)$, for some $\nu_n\in\dual V_n$, for every $n\in\N$.
Let $\nu=(\nu_n)_{n\in\N}\in \prod_{n\in\N}\dual V_n\cong\dual V$. Then $\nu$ is not quasi-periodic for $\dual\phi$. Indeed, for $s<t$ in $\N$, take $n\in\N$ with $s<t<d_n$; then $(\dual\phi_n)^s(\nu_n)\neq(\dual\phi_n)^t(\nu_n)$ and so $(\dual\phi)^s(\nu)\neq(\dual\phi)^t(\nu)$. This shows that $\dual\phi$ is not locally quasi-periodic, and so $\ent(\dual\phi)>0$ by Proposition \ref{non-torsion->ent*=infty}. This implies $\ent(\dual\phi)\geq\log p$.

It follows that $\ent(\dual\phi)=\infty$. In fact, there exists a partition $\N=\dot\bigcup_{i\in\N}N_i$ of $\N$, where each $N_i$ is infinite. Then $\dual V\cong\prod_{n\in\N}\dual V_n\cong\prod_{i\in\N}W_i$, where $W_i=\prod_{n\in N_i}\dual V_n$ is $\dual\phi$-invariant and has the same properties of $\dual V$ for every $i\in\N$. By the previous part of the proof $\ent(\dual\phi\restriction_{W_i})\geq\log p$ for every $i\in\N$ and so $\ent(\dual\phi)\geq\sum_{i\in\N}\ent(\dual\phi\restriction_{W_i})=\infty$.
By Theorem \ref{ent*=ent^}, $\ent^\star(\phi)=\infty$ as well.
\end{proof}

While the value of the algebraic entropy of a linear transformation $\phi:V\to V$ of the $\K$-vector space $V$ distinguishes between the torsion and the non-torsion structure of $V_\phi$, when $V_\phi$ is torsion the value of the adjoint algebraic entropy distinguishes between the bounded and the unbounded case. In particular, $V_\phi$ is unbounded if and only if $\ent^\star(\phi)=\infty$.

\medskip
Applying Lemma \ref{red-to-p} and Theorem \ref{dich} we get the required dichotomy for the adjoint algebraic entropy of endomorphisms of Abelian groups:

\begin{theorem}\label{dich-2}
If $G$ is an Abelian group and $\phi\in\End(G)$, then either $\ent^\star(\phi)=0$ or $\ent^\star(\phi)=\infty$.
\end{theorem}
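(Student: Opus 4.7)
The plan is to prove the theorem by contradiction, using the two main preparatory results that the authors have already established: the reduction lemma (Lemma \ref{red-to-p}) and the dichotomy for $\mathbb F_p$-vector spaces (Theorem \ref{dich}). Suppose, toward a contradiction, that there exist an abelian group $G$ and an endomorphism $\phi \in \End(G)$ such that $0 < \ent^\star(\phi) < \infty$.

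First I would apply Lemma \ref{red-to-p}, which asserts precisely that if $0 < \ent^\star(\phi) < \infty$, then there is a prime $p$ such that the induced endomorphism $\overline\phi_p : G/pG \to G/pG$ also satisfies $0 < \ent^\star(\overline\phi_p) < \infty$. This step performs the reduction from an arbitrary abelian group to a $p$-bounded abelian group, i.e., an $\mathbb F_p$-vector space, at which point the richer algebraic structure (module theory over $\mathbb F_p[X]$) becomes available.

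Next I would invoke Theorem \ref{dich}. The group $V := G/pG$ is naturally an $\mathbb F_p$-vector space and $\overline\phi_p$ is an $\mathbb F_p$-linear transformation, so Theorem \ref{dich} applies: the implication (d)$\Rightarrow$(c) gives that $\ent^\star(\overline\phi_p) < \infty$ forces $\ent^\star(\overline\phi_p) = 0$. This directly contradicts the strict positivity of $\ent^\star(\overline\phi_p)$ obtained in the previous step. Hence no such $\phi$ exists, and $\ent^\star(\phi) \in \{0, \infty\}$ for every endomorphism $\phi$ of every abelian group $G$.

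There is no real obstacle left at this stage: both ingredients have been proved earlier, and the argument is a one-line reduction followed by a citation. The conceptual work was carried out in Theorem \ref{dich}, where the unbounded case was split into the non-torsion case (handled via Lemma \ref{ent*>ent} and the Bernoulli shift computation of Proposition \ref{bernoulli}), the non-reduced torsion case (via embedding a left Bernoulli shift through $f(\phi)$ and Lemma \ref{ent*:pol:phi}), and the reduced unbounded torsion case (via Pontryagin duality, indecomposability and the additivity of $\ent$ along infinite direct products of invariant subgroups). Once that theorem is in hand, the global dichotomy follows immediately by the reduction step above.
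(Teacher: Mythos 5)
Your proof is correct and follows essentially the same route as the paper: both arguments reduce to the $p$-bounded case via Lemma \ref{red-to-p} and then invoke the vector-space dichotomy of Theorem \ref{dich}. The only difference is cosmetic — the paper argues directly (if $\ent^\star(\phi)>0$ then $\ent^\star(\overline\phi_p)=\infty$, hence $\ent^\star(\phi)=\infty$ by Lemma \ref{quotient}), while you phrase it as a contradiction using the ``in particular'' clause of Lemma \ref{red-to-p}, which already packages that same use of Lemma \ref{quotient}.
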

\begin{proof}
Assume that $\ent^\star(\phi)>0$. By Lemma \ref{red-to-p} there exists a prime $p$ such that the induced endomorphism $\overline\phi_p: G/pG\to G/pG$ has $\ent^\star(\overline\phi_p)>0$. Theorem \ref{dich} yields $\ent^\star(\overline\phi_p)=\infty$. Then $\ent^\star(\phi)=\infty$ by Lemma \ref{quotient}.
\end{proof}

Now we collect together the properties characterizing the endomorphisms with zero adjoint algebraic entropy. It is worthwhile to compare these properties with those of the algebraic entropy in Proposition \ref{non-torsion->ent*=infty}.

\begin{corollary}\label{ent*=0}
Let $G$ be an Abelian group and $\phi\in\End(G)$.
Denote by $\overline\phi_p:G/pG\to G/pG$ the endomorphism induced by $\phi$ for every prime $p$; then the following conditions are equivalent:
\begin{itemize}
\item[(a)] $\ent^\star(\phi)=0$;
\item[(b)] for every prime $p$ there exists a (monic) polynomial $f_p\in \Z[X]$ such that $f_p(\phi)(G)\subseteq p G$; 
\item[(c)] $\ent^\star(\overline\phi_p)=0$ for every prime $p$;
\item[(d)] $\overline\phi_p$ is algebraic (i.e., $(G/p G)_{\overline\phi_p}$ is a bounded $\K[\overline\phi_p]$-module) for every prime $p$;
\item[(e)] $\overline\phi_p$ is quasi-periodic for every prime $p$;
\item[(f)] $\ent^\star(\overline\phi_p)<\infty$ for every prime $p$;
\item[(g)] $\ent^\star(\phi)<\infty$.
\end{itemize}
\end{corollary}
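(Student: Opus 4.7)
The plan is to verify the equivalences by combining Theorems \ref{dich} and \ref{dich-2} with Lemmas \ref{quotient} and \ref{red-to-p}, plus a short translation between conditions (b) and (d). The corollary is essentially a packaging result, so the task is to route each equivalence through the correct prior statement.

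\emph{First}, I would observe that (c), (d), (e), (f) are equivalent prime by prime: applying Theorem \ref{dich} to the $\mathbb F_p$-vector space $G/pG$ and the linear transformation $\overline\phi_p$ yields the chain $\overline\phi_p\ \text{algebraic}\ \Leftrightarrow\ \overline\phi_p\ \text{quasi-periodic}\ \Leftrightarrow\ \ent^\star(\overline\phi_p)=0\ \Leftrightarrow\ \ent^\star(\overline\phi_p)<\infty$. Quantifying over all primes $p$ gives (c)$\Leftrightarrow$(d)$\Leftrightarrow$(e)$\Leftrightarrow$(f).

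\emph{Second}, (a)$\Leftrightarrow$(g) is immediate from the dichotomy Theorem \ref{dich-2}: since $\ent^\star(\phi)\in\{0,\infty\}$, finiteness is equivalent to vanishing. To connect these two blocks I would prove (a)$\Leftrightarrow$(c). The implication (a)$\Rightarrow$(c) follows by applying Lemma \ref{quotient} to the (manifestly $\phi$-invariant) subgroup $pG$ of $G$, which yields $\ent^\star(\overline\phi_p)\leq\ent^\star(\phi)=0$ for every prime $p$. Conversely, if (a) fails then Lemma \ref{red-to-p} produces a prime $p$ with $\ent^\star(\overline\phi_p)>0$, contradicting (c).

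\emph{Finally}, the only step that is not a direct quotation of earlier results is (b)$\Leftrightarrow$(d), and here I would simply translate via the canonical surjection $G\to G/pG$. For any $f\in\Z[X]$, the inclusion $f(\phi)(G)\subseteq pG$ is equivalent to $\bar f(\overline\phi_p)=0$ as an endomorphism of $G/pG$, where $\bar f\in\mathbb F_p[X]$ denotes the mod-$p$ reduction of $f$. If $f$ is monic of degree $d$, then $\bar f$ is monic of the same degree, hence nonzero, so $\overline\phi_p$ is algebraic over $\mathbb F_p$. Conversely, any nonzero polynomial annihilating $\overline\phi_p$ can be rescaled (using that $\mathbb F_p$ is a field) to a monic element of $\mathbb F_p[X]$ and then lifted coefficientwise to a monic $f\in\Z[X]$ satisfying $f(\phi)(G)\subseteq pG$.

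There is no real obstacle: the corollary is a systematic repackaging of Theorems \ref{dich} and \ref{dich-2} with Lemmas \ref{quotient} and \ref{red-to-p}, and the only piece of independent bookkeeping is the elementary translation between annihilation modulo $p$ and algebraicity of $\overline\phi_p$ required for (b)$\Leftrightarrow$(d).
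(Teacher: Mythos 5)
Your proof is correct and follows essentially the same route as the paper: Theorem \ref{dich} applied to $G/pG$ for the block (c)--(f), Lemmas \ref{quotient} and \ref{red-to-p} for (a)$\Leftrightarrow$(c), and the mod-$p$ reduction of a monic integer polynomial for (b)$\Leftrightarrow$(d), which the paper dismisses as ``clear from the definition'' but which you correctly spell out. The only cosmetic difference is that you get (a)$\Leftrightarrow$(g) directly from the dichotomy Theorem \ref{dich-2}, whereas the paper re-derives it from the same ingredients via the chain (g)$\Rightarrow$(f) and (f)$\Rightarrow$(g); both routes are valid and non-circular.
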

\begin{proof}
The equivalence (c)$\Leftrightarrow$(d)$\Leftrightarrow$(e)$\Leftrightarrow$(f) is in Theorem \ref{dich}, 
(a)$\Rightarrow$(c) follows from Lemma \ref{quotient} and (c)$\Rightarrow$(a) from Lemma \ref{red-to-p}, while
(g)$\Rightarrow$(f) follows from Lemma \ref{quotient} and (f)$\Rightarrow$(g) from Lemma \ref{red-to-p} and Theorem \ref{dich}; finally (b)$\Leftrightarrow$(d) is clear from the definition and a simple computation.
\end{proof}

Consequently, we have $\ent^\star(\phi)=\infty$ if and only if there exists a prime $p$ such that $\ent^\star(\overline\phi_p)=\infty$.
By the remark after Proposition \ref{C=C_n->ent*=0}, $\ent^\star(\phi)=\infty$ if and only if there exists a prime $p$ such that $G/pG$ has a countable family $\{\overline N_k\}_{k\in\N}\subseteq \CC(G/pG)$, such that $H^\star(\overline\phi_p,\overline N_k)$ converges to $\infty$.

\medskip
Finally, as a consequence of Theorem \ref{ent*=ent^} and of the dichotomy on the value of the adjoint algebraic entropy of group endomorphisms of Abelian groups, we can deduce an analogous result on the value of the algebraic entropy of continuous endomorphisms of compact Abelian groups:

\begin{corollary}\label{dich-compcont}
Let $K$ be a compact Abelian group and $\psi:K\to K$ a continuous endomorphism. Then either $\ent(\psi)=0$ or $\ent(\psi)=\infty$.
\end{corollary}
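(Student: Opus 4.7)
The plan is to reduce the statement to the already-established dichotomy for the adjoint algebraic entropy (Theorem \ref{dich-2}) by a Pontryagin duality argument that is essentially dual to Theorem \ref{ent*=ent^}. Given a compact Abelian group $K$ and a continuous endomorphism $\psi:K\to K$, I would set $G=\dual K$, which is a discrete Abelian group (so nothing in our framework needs to be adapted), and consider the Pontryagin adjoint $\dual\psi:G\to G$. This is an endomorphism of an abstract Abelian group, so $\ent^\star(\dual\psi)$ is defined, and Theorem \ref{ent*=ent^} gives
\begin{equation*}
\ent^\star(\dual\psi)=\ent(\dual{(\dual\psi)}).
\end{equation*}

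Next I would invoke Pontryagin reflexivity for locally compact Abelian groups: the canonical evaluation map $K\to \dual{\dual K}$ is a topological isomorphism, and under this isomorphism the double adjoint $\dual{\dual\psi}:\dual{\dual K}\to \dual{\dual K}$ corresponds to $\psi$ itself. Since algebraic entropy is invariant under conjugation by group isomorphisms (property (A) at the beginning of Section \ref{basic}), this yields $\ent(\dual{\dual\psi})=\ent(\psi)$. Combining these two equalities gives
\begin{equation*}
\ent(\psi)=\ent^\star(\dual\psi).
\end{equation*}

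The conclusion is then immediate from Theorem \ref{dich-2} applied to the endomorphism $\dual\psi$ of the (abstract) Abelian group $G$: its adjoint algebraic entropy takes only the values $0$ or $\infty$, and therefore so does $\ent(\psi)$.

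The only delicate point is step two, namely justifying that $\dual{\dual\psi}$ really is conjugate to $\psi$ via the Pontryagin evaluation map; this however is a standard consequence of functoriality of $(-)^\ast$ together with the Pontryagin--van Kampen reflexivity theorem, so no real obstacle arises. Once that identification is in hand, the proof is just a one-line transfer of the dichotomy from $\ent^\star$ to $\ent$ across the duality bridge provided by Theorem \ref{ent*=ent^}.
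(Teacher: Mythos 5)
Your proposal is correct and is essentially the paper's own proof: the paper likewise sets $\phi=\dual\psi$, uses Pontryagin reflexivity to identify $\dual\phi$ with $\psi$, applies Theorem \ref{ent*=ent^} to get $\ent(\psi)=\ent^\star(\phi)$, and concludes by Theorem \ref{dich-2}. Your extra care in spelling out that the evaluation map conjugates $\dual{\dual\psi}$ to $\psi$ (and that this is where continuity of $\psi$ is needed, so that $\dual\psi$ is defined on $\dual K$) is a welcome but inessential elaboration of the same argument.
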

\begin{proof}
Let $\phi=\dual\psi$. By Pontryagin duality $\dual\phi=\psi$. So Theorem \ref{ent*=ent^} yields $\ent(\psi)=\ent^\star(\phi)$ and Theorem \ref{dich-2} concludes the proof.
\end{proof}

This corollary gives as a consequence the following surprising algebraic criterion for continuity of the endomorphisms of compact Abelian groups: 

\begin{corollary}\label{dich-compcont*}
Let $K$ be a compact Abelian group and $\psi:K\to K$ an endomorphism with $0< \ent(\psi)<\infty$. Then $\psi $ is discontinuous.  \hfill$\qed$
\end{corollary}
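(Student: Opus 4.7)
The plan is to derive this immediately as the contrapositive of Corollary \ref{dich-compcont}. Assume for contradiction that $\psi$ is continuous. Then Corollary \ref{dich-compcont} applies to $\psi$, forcing $\ent(\psi) \in \{0, \infty\}$. This contradicts the hypothesis $0 < \ent(\psi) < \infty$, so $\psi$ must be discontinuous.

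There is essentially no obstacle here, since all the content has been established in the preceding corollary; the statement is simply the logical restatement of the dichotomy in terms of a discontinuity criterion. The only thing worth double-checking is that the hypothesis in Corollary \ref{dich-compcont} is exactly ``continuous endomorphism of a compact Abelian group,'' which matches the contrapositive form we need: finite positive algebraic entropy of an endomorphism of a compact Abelian group implies the endomorphism cannot be continuous. The result itself is one sentence, so the proof is a one-line invocation of the dichotomy.
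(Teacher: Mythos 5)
Your proof is correct and matches the paper's intent exactly: the paper states this corollary with no written proof precisely because it is the immediate contrapositive of Corollary \ref{dich-compcont}, which is the one-line argument you give. Nothing is missing.
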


In particular, Corollary \ref{dich-compcont} answers in a strong negative way Problem 7.1 in \cite{AGB}, which asked if there exists a continuous endomorphism of $K^\N$, where $K$ is a finite Abelian group, with positive finite algebraic entropy. In that paper it was proved that a special class of endomorphisms (namely, the generalized shifts) of a direct product of a fixed finite Abelian group has algebraic entropy either zero or infinite. In particular, these are continuous endomorphisms of compact Abelian groups. So Theorem 1.3 in \cite{AGB} is covered by Corollary \ref{dich-compcont}. Nevertheless, one cannot obtain that theorem as a corollary of Theorem \ref{dich}, since we used in the proof of Theorem \ref{dich} the fact that the Bernoulli shifts considered on direct products have infinite algebraic entropy.

For further connection between the adjoint algebraic entropy and the topological entropy of endomorphism of compact Abelian groups see the forthcoming paper \cite{DG}.

\end{document}